\newtheorem{theorem}{Theorem}
\newtheorem{lemma}[theorem]{Lemma}
\newtheorem{corollary}[theorem]{Corollary}
\newtheorem*{maintheorem*}{Main Theorem}
\theoremstyle{definition}
\newtheorem{example}[theorem]{Example}
\theoremstyle{remark}
\newtheorem{remark}[theorem]{Remark}
\sloppy\pagestyle{plain}
\title{K-stable divisors in $\mathbb{P}^1\times \mathbb{P}^1\times\mathbb{P}^2$ of degree $(1,1,2)$}
\author{Ivan Cheltsov, Kento Fujita, Takashi Kishimoto, Takuzo Okada}
\thanks{Throughout this paper, all varieties are assumed to be projective and defined over~$\mathbb{C}$.}
\address{\emph{Ivan Cheltsov}
\newline
\textnormal{University of Edinburgh,  Edinburgh, Scotland}
\newline
\textnormal{\texttt{I.Cheltsov@ed.ac.uk}}}
\address{\emph{Kento Fujita}
\newline
\textnormal{Osaka University, Osaka, Japan}
\newline
\textnormal{\texttt{fujita@math.sci.osaka-u.ac.jp}}}
\address{\emph{Takashi Kishimoto}
\newline
\textnormal{Saitama University, Saitama, Japan}
\newline
\textnormal{\texttt{kisimoto.takasi@gmail.com}}}
\address{\emph{Takuzo Okada}
\newline
\textnormal{Saga University, Saga, Japan}
\newline
\textnormal{\texttt{okada@cc.saga-u.ac.jp}}}
\makeatletter\@addtoreset{equation}{section} \makeatother
\begin{document}
\maketitle


\begin{abstract}
We prove that every smooth divisor in $\mathbb{P}^1\times \mathbb{P}^1\times\mathbb{P}^2$ of degree $(1,1,2)$ is K-stable.
\end{abstract}

\tableofcontents

\section{Introduction}
\label{section:intro}

Smooth Fano threefolds have been classified by Iskovskikh, Mori and Mukai into $105$ families,
which are labeled as \textnumero 1.1, \textnumero 1.2,  \textnumero 1.3, $\ldots$, \textnumero 10.1. See \cite{ACCFKMGSSV} for the description of these families.
Threefolds in each of these $105$ deformation families can be parametrized by a~non-empty rational irreducible variety.
It has been proved in~\cite{ACCFKMGSSV,Fujita2019Crelle,Fujita2021} that the deformation families
\begin{center}
\textnumero 2.23, \textnumero 2.26, \textnumero 2.28, \textnumero 2.30, \textnumero 2.31, \textnumero 2.33, \textnumero 2.35, \textnumero 2.36, \textnumero 3.14, \\
\textnumero 3.16, \textnumero 3.18, \textnumero 3.21, \textnumero 3.22, \textnumero 3.23, \textnumero 3.24, \textnumero 3.26, \textnumero 3.28, \textnumero 3.29, \\
\textnumero 3.30, \textnumero 3.31, \textnumero 4.5,  \textnumero 4.8,  \textnumero 4.9,  \textnumero 4.10, \textnumero 4.11, \textnumero 4.12, \textnumero 5.2
\end{center}
do not have smooth K-polystable members,
and general members of the remaining 78 deformation families are K-polystable.
In fact, for 54 among these 78 families, we know all K-polystable smooth members \cite{AbbanZhuangSeshadri,ACCFKMGSSV,BelousovLoginov,CheltsovDenisovaFujita,CheltsovPark,Denisova,Liu,XuLiu}.
The remaining $24$ deformation families are
\begin{center}
\textnumero 1.9, \textnumero 1.10, \textnumero 2.5, \textnumero 2.9, \textnumero 2.10, \textnumero 2.11, \textnumero 2.12, \textnumero 2.13, \\
\textnumero 2.14, \textnumero 2.15, \textnumero 2.16, \textnumero 2.17, \textnumero 2.18, \textnumero 2.19, \textnumero 2.20, \textnumero 2.21, \\
\textnumero 3.2, \textnumero 3.3, \textnumero 3.4, \textnumero 3.5,    \textnumero 3.6, \textnumero 3.7, \textnumero 3.8, \textnumero 3.11.
\end{center}

The goal of this paper is to show that all smooth Fano threefolds in the family \textnumero 3.3 are \mbox{K-stable}.
Smooth members of this deformation family are smooth divisors in $\mathbb{P}^1\times \mathbb{P}^1\times\mathbb{P}^2$ of degree $(1,1,2)$.
To be precise, we prove the following result:

\begin{maintheorem*}
\label{theorem:main}
Let $X$ be a smooth divisor in $\mathbb{P}^1\times \mathbb{P}^1\times\mathbb{P}^2$ of degree $(1,1,2)$. Then $X$ is K-stable.
\end{maintheorem*}

We thank the~Nemuro city council and Saitama University for excellent working conditions.

Cheltsov has been supported by JSPS  Invitational Fellowships for Research in Japan (S22040) and by EPSRC Grant Number EP/V054597/1 (The Calabi problem for smooth Fano threefolds).
Fujita, Kishimoto and Okada have been supported by JSPS KAKENHI Grants Number 22K03269, 19K03395, JP22H01118, respectively.

We would like to thank an anonymous referee for useful comments.

\section{Smooth Fano threefolds in the~deformation family \textnumero 3.3}
\label{section:3-3}

Let $X$ be a  divisor in $\mathbb{P}^1_{s,t}\times \mathbb{P}^1_{u,v}\times\mathbb{P}^2_{x,y,z}$ of tridegree $(1,1,2)$,
where $([s:t],[u:v],[x:y:z])$ are coordinates on $\mathbb{P}^1_{s,t}\times \mathbb{P}^1_{u,v}\times\mathbb{P}^2_{x,y,z}$.
Then $X$ is given by the~following equation:
$$
\left[
  \begin{array}{cc}
    s & t\\
  \end{array}
\right]
\left[
  \begin{array}{cc}
    a_{11} & a_{12}\\
    a_{21} & a_{22}\\
  \end{array}
\right]
\left[
  \begin{array}{c}
    u \\
    v \\
  \end{array}
\right]=0,
$$
where each $a_{ij}=a_{ij}(x,y,z)$ is a homogeneous polynomials of degree $2$.
We can also define $X$ by
$$
\left[
  \begin{array}{ccc}
    x & y & z\\
  \end{array}
\right]
\left[
  \begin{array}{ccc}
    b_{11} & b_{12} & b_{13}\\
    b_{21} & b_{22} & b_{23}\\
    b_{31} & b_{32} & b_{33}\\
  \end{array}
\right]
\left[
  \begin{array}{c}
    x \\
    y \\
    z \\
  \end{array}
\right]=0,
$$
where each $b_{ij}=b_{ij}(s,t;u,v)$ is a bi-homogeneous polynomial of degree $(1,1)$.

Suppose that $X$ is smooth.
Then $X$ is a smooth Fano threefold in the~deformation family \textnumero 3.3.
Moreover, every smooth Fano threefold in this deformation family can be obtained in this way.
Observe that $-K_X^3=18$, and we have the~following commutative diagram:
$$
\xymatrix{
&&\mathbb{P}^1_{s,t}\times\mathbb{P}^1_{u,v}\ar@{->}[dll]\ar@{->}[drr]&&\\%
\mathbb{P}^1_{s,t}&& && \mathbb{P}^1_{u,v}\\
&&X\ar@{->}[dd]^{\pi_3}\ar@{->}[uu]^{\omega}\ar@{->}[rru]^{\pi_2}\ar@{->}[llu]_{\pi_1}\ar@{->}[dll]_{\phi_1}\ar@{->}[drr]^{\phi_2} &&\\
\mathbb{P}^1_{s,t}\times\mathbb{P}^2_{x,y,z}\ar@{->}[uu]\ar@{->}[rrd]&& && \mathbb{P}^1_{u,v}\times\mathbb{P}^2_{x,y,z}\ar@{->}[uu]\ar@{->}[lld]\\
&&\mathbb{P}^2_{x,y,z} &&}
$$
where all maps are induced by natural projections.
Note that $\omega$ is a~(standard) conic bundle whose discriminant curve $\Delta_{\mathbb{P}^1\times\mathbb{P}^1}\subset\mathbb{P}^1_{s,t}\times\mathbb{P}^1_{u,v}$ is a~(possibly singular) curve of degree $(3,3)$ given by
$$
\mathrm{det}\left[\begin{array}{ccc}
    b_{11} & b_{12} & b_{13}\\
    b_{21} & b_{22} & b_{23}\\
    b_{31} & b_{32} & b_{33}\\
  \end{array}
\right]=0.
$$
Similarly, the~map $\pi_3$ is a~(non-standard) conic bundle whose discriminant curve $\Delta_{\mathbb{P}^2}$ is a~smooth plane quartic curve in $\mathbb{P}^2_{x,y,z}$,
which is given by $a_{11}a_{22}=a_{12}a_{21}$.
Both maps $\phi_1$ and $\phi_2$ are birational morphisms that blow up the~following smooth genus $3$ curves:
\begin{align*}
\big\{sa_{11}+ta_{21}=sa_{12}+ta_{22}=0\big\}&\subset\mathbb{P}^1_{s,t}\times\mathbb{P}^2_{x,y,z},\\
\big\{ua_{11}+va_{12}=ua_{21}+va_{22}=0\big\}&\subset\mathbb{P}^1_{u,v}\times\mathbb{P}^2_{x,y,z}.
\end{align*}
Finally, both morphisms $\pi_1$ and $\pi_2$ are fibrations into quintic del Pezzo surfaces.

Let $H_1=\pi_1^*(\mathcal{O}_{\mathbb{P}^1}(1))$, let $H_2=\pi_2^*(\mathcal{O}_{\mathbb{P}^1}(1))$, let $H_3=\pi_3^*(\mathcal{O}_{\mathbb{P}^2}(1))$,
let $E_1$ and $E_2$ be the~exceptional divisors of the~morphisms $\phi_1$ and $\phi_2$, respectively.
Then
\begin{align*}
-K_X&\sim H_1+H_2+H_3,\\
E_1&\sim H_1+2H_3-H_2,\\
E_2&\sim H_2+2H_3-H_1.
\end{align*}
This gives $E_1+E_2\sim 4H_3$, which also follows from $E_1+E_2=\pi_3^*(\Delta_{\mathbb{P}^2})$. We have
$$
-K_X\sim_{\mathbb{Q}} \frac{3}{2}H_1+\frac{1}{2}H_2+\frac{1}{2}E_2\sim_{\mathbb{Q}} \frac{1}{2}H_1+\frac{3}{2}H_2+\frac{1}{2}E_1.
$$
In particular, we see that $\alpha(X)\leqslant\frac{2}{3}$.
Note that $E_1\cong E_2\cong \Delta_{\mathbb{P}^2}\times\mathbb{P}^1$.

The Mori cone $\overline{\mathrm{NE}}(X)$ is simplicial and is generated by the~curves contracted by $\omega$, $\phi_1$ and $\phi_2$.
The~cone of effective divisors $\mathrm{Eff}(X)$ is generated by the~classes of the~divisors $E_1$, $E_2$, $H_1$, $H_2$.

\begin{lemma}
\label{lemma:singular-quintic-del-Pezzo-surfaces}
Let $S$ be a surface in the~pencil $|H_1|$.
Then $S$ is a normal quintic del Pezzo surface that has at most Du Val singularities,
the restriction $\pi_3\vert_{S}\colon S\to\mathbb{P}^2_{x,y,z}$ is a birational morphism,
and the~restriction $\pi_2\vert_{S}\colon S\to\mathbb{P}^1_{u,v}$ is a conic bundle.
Moreover, one of the~following cases hold:
\begin{itemize}
\item  the~surface $S$ is smooth,

\item[($\mathbb{A}_1$)] the~surface $S$ has one singular point of type $\mathbb{A}_1$,

\item[($2\mathbb{A}_1$)] the~surface $S$ has two singular points of type $\mathbb{A}_1$,

\item[($\mathbb{A}_2$)] the~surface $S$ has one singular point of type $\mathbb{A}_2$,

\item[($\mathbb{A}_3$)] the~surface $S$ has one singular point of type $\mathbb{A}_3$.
\end{itemize}
Furthermore, in each of these five case, the~del Pezzo surface $S$ is unique up to an~isomorphism.
\end{lemma}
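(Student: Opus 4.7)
Fix $[s_0:t_0] \in \mathbb{P}^1_{s,t}$ and set $q_1 := s_0 a_{11} + t_0 a_{21}$ and $q_2 := s_0 a_{12} + t_0 a_{22}$, both quadratic forms on $\mathbb{P}^2_{x,y,z}$. Then $S = \pi_1^{-1}([s_0:t_0])$ is cut out inside $\mathbb{P}^1_{u,v} \times \mathbb{P}^2_{x,y,z}$ by the single equation $uq_1 + vq_2 = 0$, so the plan is to analyze $S$ entirely through the pencil of conics $(q_1, q_2)$ on $\mathbb{P}^2$, whose base scheme $B := V(q_1, q_2)$ governs the geometry of $S$. I would first check that $q_1, q_2$ share no common linear factor: otherwise $S$ would contain a divisor of the form $L \times \mathbb{P}^1_{u,v}$ with $L \subset \mathbb{P}^2$ a line, and a Jacobian computation along this locus produces a singular point of $X$, contradicting smoothness. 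Hence $B$ is a zero-dimensional complete intersection of length $4$ by Bezout.

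A local computation then identifies $\pi_3|_S$ with the blow-up of $\mathbb{P}^2$ along the ideal $\mathcal{I}_B$ (in particular giving birationality of $\pi_3|_S$), and $\pi_2|_S$ with the total space of the pencil $\{u_0 q_1 + v_0 q_2 = 0\}$ parameterised by $[u_0:v_0] \in \mathbb{P}^1_{u,v}$, hence a conic bundle. Adjunction in $\mathbb{P}^1 \times \mathbb{P}^2$ yields $-K_S = \mathcal{O}(1,1)|_S$, and a direct intersection computation in the ambient threefold gives $(-K_S)^2 = 5$; thus $S$ is at worst a singular quintic del Pezzo surface.

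I would then classify the singularities. A point $(P, [u_0:v_0]) \in S$ is singular on $S$ if and only if $P \in \mathrm{Supp}(B)$ and $u_0 q_1 + v_0 q_2 = 0$ is singular at $P$, which is equivalent to $\nabla q_1(P), \nabla q_2(P)$ being linearly dependent, i.e.\ to the local intersection multiplicity satisfying $m_P(q_1, q_2) \geq 2$. Smoothness of $X$ additionally forces both $q_1, q_2$ to be smooth at every base point, so $B$ is curvilinear and the partition $4 = \sum_P m_P$ takes exactly the five shapes $(1,1,1,1), (2,1,1), (2,2), (3,1), (4)$; the hard bound $m_P \leq 4$ for two conics smooth at $P$ rules out any further degeneration and in particular excludes $\mathbb{A}_4$ as well as non-Du Val singularities. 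A direct local calculation---normalizing $q_1$ to $y + \text{h.o.t.}$ and letting $q_2$ osculate to the prescribed order in suitable affine coordinates---shows that these five strata produce respectively a smooth surface, one $\mathbb{A}_1$, two $\mathbb{A}_1$, one $\mathbb{A}_2$, and one $\mathbb{A}_3$ singularity.

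Finally, uniqueness up to isomorphism within each stratum follows from combining the $\mathrm{PGL}_3$-action on $\mathbb{P}^2$ with the $\mathrm{PGL}_2$-change-of-basis on the pencil $(q_1, q_2)$: in each stratum one can normalize the pencil to a single standard pair (for example $(yz - x^2, y^2)$ in the $(4)$ stratum), which determines $S$ up to isomorphism. The main technical obstacle is the explicit local singularity computation in the two most degenerate strata $(3,1)$ and $(4)$, where one must verify that the surface singularity is precisely $\mathbb{A}_2$, respectively $\mathbb{A}_3$; with a standard normal form of the pencil in hand this reduces to a completion of squares in the local surface equation, but pinning down the correct normal forms and confirming the absence of worse singularities within each stratum is where the calculation becomes delicate.
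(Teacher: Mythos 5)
The paper's own ``proof'' is a one-line citation to \cite{CheltsovProkhorov,Coray1988}, so you are genuinely proving something the authors outsource to the literature; your route via the total space of the pencil of conics $\lambda q_1+\mu q_2$ on $\mathbb{P}^2$ is a legitimate and self-contained alternative. The core reduction is correct: $S=\{uq_1+vq_2=0\}$ is the blow-up of $\mathbb{P}^2$ along the ideal $(q_1,q_2)$, with $\pi_2|_S$ the associated conic bundle, and adjunction gives $(-K_S)^2=5$; classifying the base scheme $B$ into the partitions $(1^4),(2,1^2),(2^2),(3,1),(4)$ and checking the corresponding singularities $-$, $\mathbb{A}_1$, $2\mathbb{A}_1$, $\mathbb{A}_2$, $\mathbb{A}_3$ indeed recovers the lemma. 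However, the two places where you invoke smoothness of $X$ are shakier than you suggest. First, if $q_1,q_2$ shared a line $L$, your ``Jacobian computation along $L\times\mathbb{P}^1$'' does not, by itself, produce a singular point of $X$: the vanishing of $\partial F/\partial s$ and $\partial F/\partial t$ (equivalently, $(u_0,v_0)\in\ker A(P)$) is an additional condition not forced by the degeneration of $S$. Second, smoothness of $X$ does \emph{not} force both $q_1$ and $q_2$ to be smooth at every base point, as one can see already from the local model $q_1=x^2$, $q_2=y$. What is true, and what you actually need, is that $B\subset\Delta_{\mathbb{P}^2}$ as a \emph{scheme} (since $s_0\det A=q_1a_{22}-q_2a_{21}\in(q_1,q_2)$, and similarly with $t_0$), and the paper already records in \S\ref{section:3-3} that $\Delta_{\mathbb{P}^2}$ is a smooth quartic when $X$ is smooth; this simultaneously forces $B$ to be zero-dimensional (a smooth quartic contains no line or conic) and curvilinear (it sits inside a smooth curve), and replaces both of your problematic appeals to smoothness of $X$ by a single clean argument. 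With that substitution, your Step~3 local computations (the $A_1$, $A_2$, $A_3$ normal forms at base points of multiplicity $2$, $3$, $4$) and the $\mathrm{PGL}_3\times\mathrm{PGL}_2$-normal-form argument for uniqueness do go through, so the overall strategy is sound; the trade-off against the paper's citation is a longer but more transparent and self-contained proof.
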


\begin{proof}
This is well-known \cite{CheltsovProkhorov,Coray1988}.
\end{proof}

\begin{remark}
\label{remark:singular-quintic-del-Pezzo-surfaces}
In the~notations and assumptions of Lemma~\ref{lemma:singular-quintic-del-Pezzo-surfaces},
suppose that the~surface $S$ is singular,
and let $\varpi\colon\widetilde{S}\to S$ be its minimal resolution of singularities.
Then the~dual graph of the~$(-1)$-curves and $(-2)$-curves on the~surface $\widetilde{S}$ can be described as follows:
\begin{itemize}
\item[($\mathbb{A}_1$)] if $S$ has one singular point of type $\mathbb{A}_1$, then the~dual graph is
$$
\xymatrix@R=0.8em{
&&\circ\ar@{-}[lld]\ar@{-}[d]\ar@{-}[rrd]&&\\
\bullet\ar@{-}[d]&&\bullet\ar@{-}[d]&&\bullet\ar@{-}[d]\\
\bullet&&\bullet&&\bullet\\
&&\bullet\ar@{-}[llu]\ar@{-}[u]\ar@{-}[rru]&&}
$$

\item[($2\mathbb{A}_1$)] if  $S$ has two singular points of type $\mathbb{A}_1$, then the~dual graph is
$$
\xymatrix@R=0.8em{
\circ\ar@{-}[d]\ar@{-}[rr]&&\bullet\ar@{-}[rr]&&\circ\ar@{-}[d]\\
\bullet\ar@{-}[r]&\bullet\ar@{-}[rr]&&\bullet\ar@{-}[r]&\bullet}
$$

\item[($\mathbb{A}_2$)] if  $S$ has one singular point of type $\mathbb{A}_2$, then the~dual graph is
$$
\xymatrix@R=7pt@C=37pt{
&&&&\bullet\\
\bullet\ar@{-}[r]&\bullet\ar@{-}[r]&\circ\ar@{-}[r]&\circ\ar@{-}[ru]\ar@{-}[rd]\\
&&&&\bullet}
$$

\item[($\mathbb{A}_3$)] if  $S$ has one singular point of type $\mathbb{A}_3$, then the~dual graph is
$$
\xymatrix@R=0.8em{
\circ\ar@{-}[r]&\circ\ar@{-}[r]\ar@{-}[d]&\circ\ar@{-}[r]&\bullet\\
&\bullet}
$$
\end{itemize}
Here, as~in~the~papers \cite{Coray1988,CheltsovProkhorov}, we denote a~$(-1)$-curve by $\bullet$, and we denote a~$(-2)$-curve by $\circ$.
\end{remark}

\begin{lemma}
\label{lemma:S1-S2-singular}
Let $S_1$ be a surface in $|H_1|$, let $S_2$ be a surface in $|H_2|$, and let $P$ be a point in $S_1\cap S_2$.
Then at least one of the~surfaces $S_1$ or $S_2$ is smooth at $P$.
\end{lemma}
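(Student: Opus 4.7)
\bigskip
\noindent\textbf{Proof proposal.} The plan is to give a direct Jacobian computation in a suitable affine chart around $P$. Using the PGL$_2$-action on each $\mathbb{P}^1$-factor and the PGL$_3$-action on $\mathbb{P}^2$, I normalize so that $S_1$ is the fiber of $\pi_1$ over $[1:0]\in\mathbb{P}^1_{s,t}$, $S_2$ is the fiber of $\pi_2$ over $[1:0]\in\mathbb{P}^1_{u,v}$, and $\pi_3(P)=[0:0:1]\in\mathbb{P}^2_{x,y,z}$. Working in the affine chart $s=u=z=1$ with local coordinates $(t,v,x,y)$ and writing $a_{ij}$ for $a_{ij}(x,y,1)$, the defining equation of $X$ becomes
$$
f = a_{11} + v\,a_{12} + t\,a_{21} + tv\,a_{22},
$$
while $S_1\subset \mathbb{P}^1_{u,v}\times\mathbb{P}^2$ and $S_2\subset\mathbb{P}^1_{s,t}\times\mathbb{P}^2$ are cut out locally by
$$
g_1 = a_{11} + v\,a_{12}, \qquad g_2 = a_{11} + t\,a_{21},
$$
and $P$ corresponds to the origin $(0,0,0,0)$.

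Since $f=g_1+t(a_{21}+v\,a_{22})=g_2+v(a_{12}+t\,a_{22})$, a direct calculation at $P$ gives
$$
f_t\big|_P = a_{21}(0,0,1),\quad f_v\big|_P = a_{12}(0,0,1),\quad f_x\big|_P = (a_{11})_x(0,0,1),\quad f_y\big|_P = (a_{11})_y(0,0,1),
$$
and the same expressions recover the corresponding partial derivatives of $g_1$ and $g_2$ at $P$. Assuming for contradiction that both $S_1$ and $S_2$ are singular at $P$, the vanishing of the Jacobian of $g_1$ forces $a_{12}(0,0,1)=(a_{11})_x(0,0,1)=(a_{11})_y(0,0,1)=0$, and the vanishing of the Jacobian of $g_2$ adds $a_{21}(0,0,1)=0$. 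Combining these with the formulas above, every partial derivative of $f$ vanishes at $P$, contradicting the smoothness of $X$ at $P$.

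The whole argument is elementary once the reduction is made; there is essentially no obstacle beyond carefully verifying the affine normalization and matching up the four Jacobian entries. The conceptual content is that the equation of $X$ is \emph{bilinear} in $(s{:}t)$ and $(u{:}v)$, so the Jacobian of $X$ at $P$ is determined by the Jacobians of $S_1$ and $S_2$ at $P$ together with a single additional entry that does not appear when both surfaces are singular, whence singularity of both surfaces propagates to $X$.
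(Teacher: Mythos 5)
Your proof is correct and fills in exactly the computation the paper leaves implicit under the phrase ``Local computations.'' The normalization by $\mathrm{PGL}_2\times\mathrm{PGL}_2\times\mathrm{PGL}_3$ is legitimate, the local equations $g_1,g_2$ for $S_1,S_2$ are identified correctly, and the observation that every partial of $f$ at $P$ is recovered from the partials of $g_1$ and $g_2$ (so that simultaneous singularity of $S_1$ and $S_2$ forces singularity of $X$) is precisely the intended argument.
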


\begin{proof}
Local computations.
\end{proof}

\begin{corollary}
\label{corollary:S1-S2-singular}
In the~notations and assumptions of Lemma~\ref{lemma:S1-S2-singular},
suppose the~conic $S_1\cdot S_2$ is reduced.
Then at least one of the~surfaces $S_1$ or $S_2$ is smooth along $S_1\cap S_2$.
\end{corollary}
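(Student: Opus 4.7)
The plan is to identify $C:=S_1\cap S_2$ with the fiber of the conic bundle $\omega\colon X\to\mathbb{P}^1_{s,t}\times\mathbb{P}^1_{u,v}$ over the point corresponding to the pair $(S_1,S_2)$, and then to transfer singularities of the surfaces $S_i$ to singularities of that fiber.

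First I would prove the key local assertion: if $P\in C$ is a smooth point of $C$, then both $S_1$ and $S_2$ are smooth at $P$. Since $X$ and $\mathbb{P}^1\times\mathbb{P}^1$ are smooth and $\omega$ is a flat morphism with equidimensional one-dimensional fibers, smoothness of the fiber $C$ at $P$ is equivalent to smoothness of $\omega$ at $P$, which is to say that $d\omega_P$ is surjective. Because $\pi_i=\mathrm{pr}_i\circ\omega$ for $i=1,2$ (where $\mathrm{pr}_i$ is the projection from $\mathbb{P}^1_{s,t}\times\mathbb{P}^1_{u,v}$ to its $i$-th factor), the differentials $d(\pi_i)_P$ are also surjective, and hence the fiber $S_i=\pi_i^{-1}(\pi_i(P))$ is smooth at $P$.

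Since $\pi_3$ is injective on every fiber of $\omega$ (on such a fiber the first two coordinates are fixed) and $H_1\cdot H_2\cdot H_3=2$, the conic $C$ embeds as a plane conic in $\mathbb{P}^2_{x,y,z}$ via $\pi_3$. Under the reducedness hypothesis there are only two possibilities for $C$: either it is an irreducible smooth conic, or it is the union of two distinct lines meeting at a single point $P_0$. In the first case the previous paragraph already shows that $S_1$ and $S_2$ are both smooth at every point of $C$. In the second case the same step yields smoothness of $S_1$ and $S_2$ along $C\setminus\{P_0\}$, while Lemma~\ref{lemma:S1-S2-singular} applied at $P_0$ guarantees that at least one of $S_1,S_2$ is smooth at $P_0$; that surface is then smooth along all of $S_1\cap S_2$, which is the desired conclusion.

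The only delicate point is the identification ``$C$ smooth at $P$ $\Longleftrightarrow$ $d\omega_P$ surjective'', but this is just miracle flatness, or equivalently the standard smoothness criterion for a flat morphism between smooth varieties with fibers of the expected dimension; the rest of the argument is formal, and the role of Lemma~\ref{lemma:S1-S2-singular} is simply to dispose of the single node that can arise in the reducible case.
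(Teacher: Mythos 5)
Your proof is correct, and it supplies exactly the argument that the paper leaves implicit (the corollary is stated without proof, as an immediate consequence of Lemma~\ref{lemma:S1-S2-singular}). The key observation you make — that $S_1\cap S_2$ is the scheme-theoretic fiber $\omega^{-1}(\omega(P))$, and that at a smooth point of this fiber the morphism $\omega$ is smooth, forcing both $\pi_1$ and $\pi_2$ (and hence both $S_1$ and $S_2$) to be smooth there — reduces the problem to the single node of the reduced conic, where Lemma~\ref{lemma:S1-S2-singular} applies. One small remark: invoking miracle flatness is more machinery than is strictly needed; since $C$ is cut out by the two local equations defining $S_1$ and $S_2$, one has $T_PC=\ker(d\omega_P)$ directly, so smoothness of $C$ at $P$ (i.e.\ $\dim T_PC=1$) forces $d\omega_P$ to have rank $2$ and hence be surjective by pure linear algebra. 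Either way the argument is sound.
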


\begin{lemma}
\label{lemma:conic-conic-singular}
Let $P$ be a point in $X$, let $C$ be the~scheme fiber of the~conic bundle $\omega$ that contains~$P$,
and let $Z$ be the~scheme fiber of the~conic bundle $\pi_3$ that contains $P$.
Then $C$ or $Z$ is smooth at~$P$.
\end{lemma}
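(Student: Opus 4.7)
The plan is to argue by contradiction: I assume that both $C$ and $Z$ are singular at $P$, and then show that $X$ itself must be singular at $P$, which contradicts the standing smoothness hypothesis.

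First I would normalize coordinates. Applying independent linear changes on each of the three factors of $\mathbb{P}^1\times\mathbb{P}^1\times\mathbb{P}^2$, which replaces $X$ by another smooth member of the same family, I may assume $P=([1:0],[1:0],[1:0:0])$. In these coordinates,
\[
C = \omega^{-1}([1:0],[1:0]) = \{a_{11}(x,y,z)=0\}\subset\mathbb{P}^2_{x,y,z},
\]
\[
Z = \pi_3^{-1}([1:0:0]) = \{b_{11}(s,t;u,v)=0\}\subset\mathbb{P}^1_{s,t}\times\mathbb{P}^1_{u,v}.
\]

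Next I would translate each hypothesis into vanishing of coefficients. From $P\in C$ and $C$ singular at $P$, the quadratic $a_{11}$ vanishes at $(1,0,0)$ together with all its first partials, so $a_{11}$ contains no $x^2$, $xy$, or $xz$ monomial. From $P\in Z$ and $Z$ singular at $P$, the bihomogeneous $(1,1)$-form $b_{11}$ must be a scalar multiple of $tv$. Expanding the matrix identity
\[
\begin{bmatrix} s & t \end{bmatrix}\begin{bmatrix} a_{11} & a_{12} \\ a_{21} & a_{22} \end{bmatrix}\begin{bmatrix} u \\ v \end{bmatrix}
=
\begin{bmatrix} x & y & z \end{bmatrix}\begin{bmatrix} b_{11} & b_{12} & b_{13} \\ b_{21} & b_{22} & b_{23} \\ b_{31} & b_{32} & b_{33} \end{bmatrix}\begin{bmatrix} x \\ y \\ z \end{bmatrix}
\]
one finds $b_{11}(s,t;u,v) = c_{11}su + c_{12}sv + c_{21}tu + c_{22}tv$, where $c_{ij}$ denotes the coefficient of $x^2$ in $a_{ij}$. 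The condition on $b_{11}$ therefore forces $c_{11}=c_{12}=c_{21}=0$.

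Finally I would compute the Jacobian of the local equation of $X$ at $P$. In the affine chart $(t',v',y',z'):=(t/s,\,v/u,\,y/x,\,z/x)$ around $P$ and with the shorthand $\tilde a_{ij}(y',z'):=a_{ij}(1,y',z')$, the local equation of $X$ becomes
\[
\tilde F(t',v',y',z') = \tilde a_{11} + v'\tilde a_{12} + t'\tilde a_{21} + t'v'\tilde a_{22}.
\]
A direct calculation at the origin gives $\partial_{t'}\tilde F = \tilde a_{21}(0,0) = c_{21} = 0$ and $\partial_{v'}\tilde F = \tilde a_{12}(0,0) = c_{12} = 0$, while $\partial_{y'}\tilde F$ and $\partial_{z'}\tilde F$ equal the corresponding first partials of $\tilde a_{11}$ at $(0,0)$, both of which vanish because $a_{11}$ has no $xy$ or $xz$ monomial. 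Hence the full Jacobian of $\tilde F$ vanishes at $P$, so $X$ is singular at $P$---the desired contradiction. The whole argument is purely local and reduces to bookkeeping; no conceptual obstacle is anticipated. The only step requiring genuine attention is the translation between the $(a_{ij})$- and $(b_{ij})$-presentations of $X$, which is what converts the singularity hypothesis on $Z$ into the vanishing of the $x^2$-coefficients of $a_{11}$, $a_{12}$, and $a_{21}$.
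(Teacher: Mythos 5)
Your argument is correct, and it is exactly the local computation the paper alludes to: after normalizing $P=([1{:}0],[1{:}0],[1{:}0{:}0])$, the fiber $C$ is $\{a_{11}=0\}\subset\mathbb{P}^2$, the fiber $Z$ is $\{b_{11}=0\}\subset\mathbb{P}^1\times\mathbb{P}^1$, and the $x^2$-coefficients of the $a_{ij}$ are precisely the coefficients of $b_{11}$, so singularity of both at $P$ kills the entire gradient of the local equation of $X$ there. This matches the paper's intended (unwritten) proof.
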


\begin{proof}
Local computations.
\end{proof}

\begin{lemma}
\label{lemma:quartic-weak-del-Pezzo-surfaces}
Let $C$ be a fiber of the~morphism $\pi_3$,
let $S$ be a general surface in $|H_3|$ that contains~$C$.
Then $S$ is smooth,  $K_S^2=4$ and $-K_S\sim (H_1+H_2)\vert_{S}$, which implies that $-K_S$ is nef and big.
Moreover, one of the~following three cases holds:
\begin{itemize}
\item[($\mathrm{1}$)]  the~conic $C$ is smooth, $-K_S$ is ample,
and the~restriction $\omega\vert_{S}\colon S\to\mathbb{P}^1_{s,t}\times\mathbb{P}^1_{u,v}$ is a double cover
branched over a smooth curve of degree $(2,2)$,

\item[($\mathrm{2}$)]  the~conic $C$ is smooth, the~divisor $-K_S$ is not ample, the~conic $\omega(C)$ is an irreducible component of the~discriminant curve $\Delta_{\mathbb{P}^1\times\mathbb{P}^1}$,
the conic $C$ is contained in $\mathrm{Sing}(\omega^{-1}(\Delta_{\mathbb{P}^1\times\mathbb{P}^1}))$,
and the~restriction map $\omega\vert_{S}\colon S\to\mathbb{P}^1_{s,t}\times\mathbb{P}^1_{u,v}$ fits the~following commutative diagram:
$$
\xymatrix{
&S\ar@{->}[dl]_\alpha\ar@{->}[dr]^{\omega\vert_{S}}&\\
\overline{S}\ar@{->}[rr]_\beta&&\mathbb{P}^1_{s,t}\times\mathbb{P}^1_{u,v}&&}
$$
where $\alpha$ is a birational morphism that contracts two disjoint $(-2)$-curves,
and $\beta$ is a double cover branched over a singular curve of degree $(2,2)$,
which is a union of the~curve $\omega(C)$ and another smooth curve of degree $(1,1)$,
which intersect transversally at two distinct points,

\item[($\mathrm{3}$)]  the~conic $C$ is singular, $-K_S$ is ample,
and the~restriction $\omega\vert_{S}\colon S\to\mathbb{P}^1_{s,t}\times\mathbb{P}^1_{u,v}$ is a double cover
branched over a smooth curve of degree $(2,2)$.
\end{itemize}
\end{lemma}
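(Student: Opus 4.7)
The plan is to extract the formal data on $S$ from adjunction and intersection theory, recognize $\omega|_S$ as essentially a double cover of $\mathbb{P}^1 \times \mathbb{P}^1$ branched along a curve of bidegree $(2,2)$, and then distinguish the three cases according to the behavior of this branch curve.

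First, since $H_3 = \pi_3^*\mathcal{O}_{\mathbb{P}^2}(1)$, a surface in $|H_3|$ containing $C$ is exactly $\pi_3^{-1}(L)$ for a line $L \subset \mathbb{P}^2_{x,y,z}$ through $p := \pi_3(C)$. Adjunction yields $K_S = (K_X + H_3)|_S = -(H_1 + H_2)|_S$, and $K_S^2 = (H_1+H_2)^2 \cdot H_3 = 4$ by the standard tridegree intersection numbers on $X$. Since $H_1+H_2 = \omega^*\mathcal{O}_{\mathbb{P}^1\times\mathbb{P}^1}(1,1)$ is semi-ample on $X$ and $K_S^2 > 0$, $-K_S$ is nef and big. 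Smoothness of $S$ for a general $L$ follows from Bertini applied to the pencil of lines through $p$ (whose base locus in $X$ is exactly $C$), together with a direct local check along $C$ using smoothness of $X$ and the matrix form of its equation. For the double-cover structure: a general fiber of $\omega$ is a conic in $\mathbb{P}^2$ meeting $L$ in two points, so $\omega|_S$ has generic degree $2$; the pullback relation $(\omega|_S)^*\mathcal{O}(f_i) = H_i|_S$ combined with Hurwitz and the expression for $K_S$ forces the branch locus $B \subset \mathbb{P}^1 \times \mathbb{P}^1$ to have bidegree $(2,2)$.

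Now split into cases. In Case (1), when $C$ is smooth and $\omega(C) \not\subset \Delta_{\mathbb{P}^1\times\mathbb{P}^1}$, the map $\omega|_S$ is a finite double cover and $B$ must be smooth, since a node on $B$ would produce an $A_1$-singularity of $S$ above it. In Case (2), $C$ smooth with $\omega(C) \subset \Delta_{\mathbb{P}^1\times\mathbb{P}^1}$: note that $\omega|_C$ is an isomorphism and, by definition of $\Delta_{\mathbb{P}^1\times\mathbb{P}^1}$, fibers of $\omega$ over points of $\omega(C)$ are singular conics with vertex on $C$, so that $C \subset \mathrm{Sing}(\omega^{-1}(\Delta_{\mathbb{P}^1\times\mathbb{P}^1}))$ and $\omega(C) \subset B$; thus $B = \omega(C) + C'$ with $C'$ of bidegree $(1,1)$, and the intersection count $\omega(C) \cdot C' = 2$ combined with transversality (forced by smoothness of $S$, since a tangency would force a non-Du-Val singularity on the normalized double cover which a single blowup could not resolve) produces the claimed factorization $\omega|_S = \beta \circ \alpha$, with $\alpha$ resolving the two disjoint $A_1$-nodes. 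In Case (3), $C$ singular at a vertex $v$: Lemma~\ref{lemma:conic-conic-singular} forces the fiber of $\omega$ through $v$ to be smooth at $v$, and combined with the birationality of $\omega|_C$ onto its image this rules out $\omega(C) \subset \Delta_{\mathbb{P}^1\times\mathbb{P}^1}$, so the argument of Case (1) applies.

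The main obstacle is Case (2): verifying that the branch curve decomposes as claimed with transverse intersection in exactly two points, and that the resulting normalized double cover carries precisely two $A_1$-nodes rather than a worse singularity. This reduces to an explicit local-coordinate calculation near each node using the matrix form of the defining equation of $X$, checking the normal form $w^2 = uv$ for the double cover there. A secondary subtlety is the smoothness of $S$ along $C$ in Cases (2) and (3), where the base locus of the pencil sits on $C$ and Bertini must be supplemented by a local computation exploiting the smoothness of $X$ and, in Case (3), the smoothness of $\Delta_{\mathbb{P}^2}$ at $p$.
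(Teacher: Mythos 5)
Your overall scaffolding — adjunction to get $K_S^2=4$ and $-K_S\sim(H_1+H_2)|_S$, recognizing $\omega|_S$ as a generically $2\colon\!1$ map with branch bidegree $(2,2)$, and then splitting on the behavior of that branch curve — is the right picture, and your treatment of Case (3) is genuinely different from the paper's and, I think, slicker: the paper rules out a singular $C$ mapping into $\Delta_{\mathbb{P}^1\times\mathbb{P}^1}$ by invoking Prokhorov's result that a standard conic bundle over $\mathbb{P}^1\times\mathbb{P}^1$ has no $(1,0)$ or $(0,1)$ component in its discriminant, whereas you use Lemma~\ref{lemma:conic-conic-singular} together with the standard fact that for a standard conic bundle the fiber over a point of $\mathrm{Sing}(\Delta_{\mathbb{P}^1\times\mathbb{P}^1})$ is a double line (which is nowhere reduced, hence nowhere smooth), contradicting smoothness of the $\omega$-fiber at the vertex of $C$. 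That is a nice local substitute for a global classification fact, although you should state the double-line fact explicitly since that is where the contradiction actually comes from.

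The real gap is earlier and more structural: you never justify the assertion that $\omega|_S$ is \emph{finite} (equivalently, that $-K_S$ is ample, i.e.\ that $S$ contains no $(-2)$-curve) when $\omega(C)\not\subset\Delta_{\mathbb{P}^1\times\mathbb{P}^1}$. You simply open Case (1) with ``the map $\omega|_S$ is a finite double cover,'' but finiteness is precisely the dichotomy that the paper spends the first half of its proof establishing: if $\ell\subset S$ is a $(-2)$-curve, then $(H_1+H_2)\cdot\ell=0$ forces $\omega(\ell)$ to be a point, $\pi_3(\ell)$ is the line $L$ through $p=\pi_3(C)$, so $\ell$ is a component of a singular $\omega$-fiber and $\omega(\ell)\in\Delta_{\mathbb{P}^1\times\mathbb{P}^1}$; moreover $\ell$ meets $C$ (since $p\in L=\pi_3(\ell)$), so $\omega(\ell)\in\omega(C)\cap\Delta_{\mathbb{P}^1\times\mathbb{P}^1}$, and letting $L$ vary generically this sweeps out a one-dimensional component of $\omega(C)\cap\Delta_{\mathbb{P}^1\times\mathbb{P}^1}$, i.e.\ an irreducible component of $C$ maps onto a component of $\Delta_{\mathbb{P}^1\times\mathbb{P}^1}$. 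Without this contrapositive argument your three cases do not exhaust the possibilities as described in the statement, because you have not matched your hypothesis ``$\omega(C)\not\subset\Delta_{\mathbb{P}^1\times\mathbb{P}^1}$'' with the lemma's hypothesis ``$-K_S$ ample.'' A smaller but related omission: the claim that all singular fibers over $\omega(C)$ have vertex lying on $C$ is not ``by definition of $\Delta_{\mathbb{P}^1\times\mathbb{P}^1}$''; it requires that $\omega^{-1}(\omega(C))$ be irreducible (a consequence of $\omega$ being a standard conic bundle) and then a monodromy argument forcing the unique section $C$ to be the vertex locus. Finally, your heuristic that a tangency of $\omega(C)$ with $C'$ would give a ``non-Du-Val'' singularity is not correct (a tacnode on the branch gives an $A_3$, which is Du Val), so transversality really does need the explicit coordinate computation — which you do flag as the main obstacle, and which the paper carries out in full.
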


\begin{proof}
The smoothness of the surface $S$ easily follows from local computations.
If $-K_S$ is ample, the remaining assertions are obvious.
So, to complete the~proof, we assume that $-K_S$ is not ample.
Then the~restriction $\omega\vert_{S}\colon S\to\mathbb{P}^1_{s,t}\times\mathbb{P}^1_{u,v}$ fits the~ commutative diagram
$$
\xymatrix{
&S\ar@{->}[dl]_\alpha\ar@{->}[dr]^{\omega\vert_{S}}&\\
\overline{S}\ar@{->}[rr]_\beta&&\mathbb{P}^1_{s,t}\times\mathbb{P}^1_{u,v}&&}
$$
where $\alpha$ is a birational morphism that contracts all $(-2)$-curves in $S$,
and $\beta$ is a double cover branched over a singular curve of degree $(2,2)$.
Let $\ell$ be a $(-2)$-curve in $S$. Then
$$
(H_1+H_2)\cdot\ell=-K_S\cdot\ell=0,
$$
so that $\omega(\ell)$ is a point in $\mathbb{P}^1_{s,t}\times\mathbb{P}^1_{u,v}$.
But $\pi_3(\ell)$ is a line in $\mathbb{P}^2_{x,y,z}$ that contains the~point~$\pi_3(C)$.
This~shows that the~curve $\ell$ is an irreducible component of a singular fiber of the~conic bundle~$\omega$.
Therefore, we see that $\omega(\ell)\in \Delta_{\mathbb{P}^1\times\mathbb{P}^1}$.
This implies that the~conic bundle $\omega$ maps an~irreducible component of the~conic $C$ to an~irreducible component of the~curve~$\Delta_{\mathbb{P}^1\times\mathbb{P}^1}$,
because $S$ is a general surface in the~linear system $|H_3|$ that contains the~curve $C$.

If $C$ is singular, an irreducible component of the~curve $\Delta_{\mathbb{P}^1\times\mathbb{P}^1}$
is a curve of degree $(1,0)$ or $(0,1)$,
which is impossible~\mbox{\cite[\S~3.8]{Prokhorov}}.
Therefore, we see that the~conic $C$ is smooth and irreducible,
and the~curve $\omega(C)\cong C$ is an~irreducible component of the~discriminant curve~$\Delta_{\mathbb{P}^1\times\mathbb{P}^1}$.
Since the~conic bundle $\omega$ is standard \cite{Prokhorov},
the surface $\omega^{-1}(\omega(C))$ is irreducible and non-normal,
which easily implies that the~conic $C$ is contained in its singular locus.

Choosing appropriate coordinates on $\mathbb{P}^1_{s,t}\times \mathbb{P}^1_{u,v}\times\mathbb{P}^2_{x,y,z}$,
we may assume that $\pi_3(C)=[0:0:1]$, the~conic $C$ is given by $x=y=sv-tu=0$,
$([0:1],[0:1])$ is a smooth point of the~curve $\Delta_{\mathbb{P}^1\times\mathbb{P}^1}$,
and the~fiber $\omega^{-1}([0:1],[0:1])$ is given by $s=u=xy=0$.
Then $X$ is given by
\begin{multline*}
(a_1su+b_1sv+c_1tu)x^2+(a_2su+b_2sv+c_2tu+tv)xy+\\
+b_4(sv-tu)xz+(a_3su+b_3sv+c_3tu)y^2+b_5(sv-tu)yz+(sv-tu)z^2=0
\end{multline*}
for some numbers $a_1$, $a_2$, $a_3$, $b_1$, $b_2$, $b_3$, $b_4$, $b_5$, $c_1$,  $c_2$,   $c_3$.
One can check that $\Delta_{\mathbb{P}^1\times\mathbb{P}^1}$ indeed splits as a~union of the~curve $\omega(C)$~and
the curve in $\mathbb{P}^1_{s,t}\times \mathbb{P}^1_{u,v}$ of degree $(2,2)$ that is given by
\begin{multline*}
a_1b_5^2stu^2-a_1b_5^2s^2uv+a_2b_4b_5s^2uv-a_2b_4b_5stu^2-a_3b_4^2s^2uv+a_3b_4^2stu^2-b_1b_5^2s^2v^2+\\
+b_1b_5^2stuv+b_2b_4b_5s^2v^2-b_2b_4b_5stuv-b_3b_4^2s^2v^2+b_3b_4^2stuv-b_4^2c_3stuv+b_4^2c_3t^2u^2+b_4b_5c_2stuv-\\
-b_4b_5c_2t^2u^2-b_5^2c_1stuv+b_5^2c_1t^2u^2+4a_1a_3s^2u^2+4a_1b_3s^2uv+4a_1c_3stu^2-a_2^2s^2u^2-2a_2b_2s^2uv-\\
-2a_2c_2stu^2+4a_3b_1s^2uv+4a_3c_1stu^2+ 4b_1b_3s^2v^2+4b_1c_3stuv-b_2^2s^2v^2-2b_2c_2stuv+4b_3c_1stuv+\\
+b_4b_5stv^2-b_4b_5t^2uv+4c_1c_3t^2u^2-c_2^2t^2u^2-2a_2stuv-2b_2stv^2-2c_2t^2uv-t^2v^2=0.
\end{multline*}

The surface $S$ is cut out on $X$ by the~equation $y=\lambda x$, where $\lambda$ is a general complex number.
Then the~double cover $\beta\colon\overline{S}\to\mathbb{P}^1_{s,t}\times \mathbb{P}^1_{u,v}$
is branched over a singular curve of degree $(2,2)$, which splits as a union of the~curve
$\omega(C)$~and the~curve in $\mathbb{P}^1_{s,t}\times \mathbb{P}^1_{u,v}$ of degree $(1,1)$ that is given by
\begin{multline*}
\lambda^2 b_5^2tu-\lambda^2b_5^2sv+4\lambda^2a_3su+4\lambda^2b_3sv-2b_4\lambda b_5sv+2\lambda b_4b_5tu+\\
+4\lambda^2c_3tu+4\lambda a_2su+4\lambda b_2sv-b_4^2sv+b_4^2tu+4\lambda c_2tu+4a_1su+4b_1sv+4c_1tu+4\lambda tv=0.
\end{multline*}
Since $\lambda$ is general and $X$ is smooth, these two curves intersect transversally by two points,
which implies the~remaining assertions of the~lemma.
\end{proof}

Note that the case ($\mathrm{2}$) in Lemma~\ref{lemma:quartic-weak-del-Pezzo-surfaces} indeed can happen.
For instance, if $X$ is given by
$$
(sv+tu)x^2+(su-sv+tv)xy+(5sv-5tu)zx+3suy^2+(sv-tu)zy+(sv-tu)z^2=0,
$$
then $X$ is smooth, and general surface in $|H_3|$ that contains the~curve $\pi_3^{-1}([0:0:1])$ is a~smooth weak del Pezzo surface, which is not a~quartic del Pezzo surface.

\begin{lemma}
\label{lemma:del-Pezzo-surface-degree-two}
Let $C$ be a smooth fiber of the~morphism $\omega$,
and let $S$ be a general surface in $|H_1+H_2|$ that contains~the~curve $C$.
Then $S$ is a smooth del Pezzo surface of degree $2$, and $-K_S\sim H_3\vert_{S}$.
\end{lemma}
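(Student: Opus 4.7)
Adjunction applied to $S \in |H_1+H_2|$ together with $-K_X \sim H_1+H_2+H_3$ gives at once
$$-K_S \sim \bigl(-K_X - S\bigr)\big|_S = H_3\big|_S,$$
which is one of the assertions. The self-intersection
$$K_S^2 = (H_3|_S)^2 = H_3^2\cdot(H_1+H_2)$$
is computed on $X$ using $[X]=h_1+h_2+2h_3$ in $\mathbb{P}^1_{s,t}\times\mathbb{P}^1_{u,v}\times\mathbb{P}^2_{x,y,z}$ together with $h_i^2=0$ for $i=1,2$, $h_3^3=0$, $h_1h_2h_3^2=1$, yielding $K_S^2 = 2$.

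The key reduction is the following. Since $\omega$ is a conic bundle, $\omega_*\mathcal{O}_X = \mathcal{O}_{\mathbb{P}^1\times\mathbb{P}^1}$, so $|H_1+H_2| = \omega^*|\mathcal{O}(1,1)|$ and every $S \in |H_1+H_2|$ is of the form $\omega^{-1}(D)$ for a unique $D \in |\mathcal{O}(1,1)|$. The condition $S \supset C$ is equivalent to $p \in D$, where $p := \omega(C)$, and such $D$'s form a net $\mathbb{P}^2 \subset \mathbb{P}^3 = |\mathcal{O}(1,1)|$. Since $C$ is a smooth fiber of $\omega$, we have $p \notin \Delta_{\mathbb{P}^1\times\mathbb{P}^1}$. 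For a general $D$ in this net, $D$ is a smooth $(1,1)$-curve, avoids the finitely many singular points of $\Delta_{\mathbb{P}^1\times\mathbb{P}^1}$, and meets the smooth locus of $\Delta_{\mathbb{P}^1\times\mathbb{P}^1}$ transversally in six distinct points (none equal to $p$). Smoothness of $S = \omega^{-1}(D)$ then follows pointwise: over $D \setminus \Delta_{\mathbb{P}^1\times\mathbb{P}^1}$ the morphism $\omega$ is submersive, so $S$ is smooth there; at the node $q$ of a singular fiber over some $p' \in D \cap \Delta_{\mathbb{P}^1\times\mathbb{P}^1}$, writing $X$ locally as $\{xy = g(s,t)\}$ with $g=0$ the local equation of $\Delta_{\mathbb{P}^1\times\mathbb{P}^1}$ and $D$ as $\{h=0\}$, smoothness of $S$ at $q$ amounts to $(h_s, h_t)(p')$ not being proportional to $(g_s, g_t)(p')$, which is precisely the transversality already arranged.

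Finally, $-K_S = H_3|_S$ is nef, and to conclude ampleness it suffices that $\pi_3|_S\colon S \to \mathbb{P}^2_{x,y,z}$ be finite, for then $-K_S = (\pi_3|_S)^*\mathcal{O}_{\mathbb{P}^2}(1)$ is ample. This morphism has generic degree $(H_1+H_2)\cdot H_3^2 = 2$, and fails to be finite precisely when $S$ contains an irreducible component of a fiber of $\pi_3$. The fibers $\pi_3^{-1}(p')$ whose $\omega$-image contains $p$ are parameterized by the conic $\pi_3(C) \subset \mathbb{P}^2_{x,y,z}$, so the corresponding $(1,1)$-curves $\omega(\pi_3^{-1}(p'))$ sweep out at most a one-dimensional subvariety of the $\mathbb{P}^2$-net of $D$'s through $p$; a similar count covers the lines in reducible fibers of $\pi_3$ over $\Delta_{\mathbb{P}^2}$. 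For a general $D$ we avoid these loci, so $\pi_3|_S$ is finite and $-K_S$ is ample. The main obstacle is to formalise this dimension count for the components of reducible $\pi_3$-fibers; the smoothness step is local and routine.
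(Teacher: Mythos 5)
Your argument is correct, and since the paper declares this proof ``Left to the reader,'' there is no official version to compare against. The adjunction step, the computation $K_S^2 = H_3^2\cdot(H_1+H_2)\cdot(h_1+h_2+2h_3)=2h_1h_2h_3^2=2$, and the reduction of smoothness to transversality of $D$ with $\Delta_{\mathbb{P}^1\times\mathbb{P}^1}$ at its smooth points via the local model $\{xy=g\}$ are all sound, and the use of $\omega_*\mathcal{O}_X=\mathcal{O}_{\mathbb{P}^1\times\mathbb{P}^1}$ to identify $|H_1+H_2|$ with $\omega^*|\mathcal{O}(1,1)|$ is the right way to organize the generality argument.

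Concerning the step you flag as incomplete: the dimension count for lines in reducible $\pi_3$-fibres can be closed, but more cleanly than by counting. If $\ell\subset S$ is a line component of a $\pi_3$-fibre, then $(H_1+H_2)\cdot\ell=1$ and $H_1\cdot\ell,H_2\cdot\ell\geqslant 0$, so $\omega(\ell)$ is a curve of bidegree $(1,0)$ or $(0,1)$; since $\omega(\ell)\subset D$, the $(1,1)$-curve $D$ must be reducible. But a general member of the net of $(1,1)$-curves through $p$ is irreducible (the reducible ones sweep out a one-dimensional locus), so no such $\ell$ occurs. And if a full conic $\pi_3^{-1}(p'')\subset S$, then $D=\omega(\pi_3^{-1}(p''))$, which forces $p''\in\pi_3(C)$, again a one-dimensional constraint since $\pi_3(C)$ is a conic. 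With this, $\pi_3|_S$ is finite for general $D$, so $-K_S=(\pi_3|_S)^*\mathcal{O}_{\mathbb{P}^2}(1)$ is ample by Nakai--Moishezon ($(-K_S)^2=2>0$ and $-K_S\cdot\ell>0$ for every curve $\ell$), completing the proof.
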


\begin{proof}
Left to the~reader.
\end{proof}

\section{Applications of Abban--Zhuang theory}
\label{section:Abban-Zhuang}

Let us use notations and assumptions of Section~\ref{section:3-3}.
Let $f\colon\widetilde{X}\to X$ be a birational map such that $\widetilde{X}$ is a normal threefold,
and let $\mathbf{F}$ be a prime divisor in $\widetilde{X}$.
Then, to prove that $X$ is K-stable, it is enough to show that $\beta(\mathbf{F})=A_X(\mathbf{F})-S_X(\mathbf{F})>0$,
where $A_X(\mathbf{F})=1+\mathrm{ord}_{\mathbf{F}}(K_{\widetilde{X}}/K_X)$ and
$$
S_X(\mathbf{F})=\frac{1}{-K_X^3}\int_{0}^{\infty}\mathrm{vol}\big(f^*(-K_X)-u\mathbf{F}\big)du.
$$
This follows from the~valuative criterion for K-stability \cite{Fujita2019Crelle,Li}.

Let $\mathfrak{C}$ be the~center of the~divisor $\mathbf{F}$ on the~threefold $X$.
By \cite[Theorem~10.1]{Fujita2016}, we have
$$
S_X(S)=\frac{1}{-K_X^3}\int_{0}^{\infty}\mathrm{vol}\big(-K_X-uS\big)du<1
$$
for every surface $S\subset X$. Hence, if $\mathfrak{C}$ is a surface, then $\beta(\mathbf{F})>0$ .
Thus, to show that $X$ is~K-stable, we may assume that $\mathfrak{C}$ is either a~curve or a~point.
If $\mathfrak{C}$ is a curve, then \cite[Corollary~1.7.26]{ACCFKMGSSV}~gives

\begin{corollary}
\label{corollary:curve}
Suppose that $\beta(\mathbf{F})\leqslant 0$ and $\mathfrak{C}$ is a curve.
Let $S$ be an irreducible normal surface in the~threefold $X$ that contains~$\mathfrak{C}$.~
Set
\begin{multline*}
\quad \quad \quad \quad S\big(W^S_{\bullet,\bullet};\mathfrak{C}\big)=\frac{3}{(-K_X)^3}\int_0^\tau\big(P(u)^{2}\cdot S\big)\cdot\mathrm{ord}_{\mathfrak{C}}\big(N(u)\big\vert_{S}\big)du+\\
+\frac{3}{(-K_X)^3}\int_0^\tau\int_0^\infty \mathrm{vol}\big(P(u)\big\vert_{S}-v\mathfrak{C}\big)dvdu,\quad \quad \quad \quad
\end{multline*}
where $\tau$ is the~largest rational number $u$ such that  $-K_X-uS$ is pseudo-effective,
$P(u)$ is the~positive part of the~Zariski decomposition of $-K_X-uS$, and $N(u)$ is its negative part.
Then $S(W^S_{\bullet,\bullet};\mathfrak{C})>1$.
\end{corollary}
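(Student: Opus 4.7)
The plan is to recognize this statement as a direct application of the Abban--Zhuang adjunction formula, precisely in the packaged form of \cite[Corollary~1.7.26]{ACCFKMGSSV}. The flag in that general statement is played here by $\mathfrak{C}\subset S\subset X$, and the hypotheses on $S$ (irreducible and normal) and on $\mathfrak{C}$ (a curve which is the center of $\mathbf{F}$) are exactly those of our corollary.

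The first step I would take is to write out the Abban--Zhuang inequality for this flag:
$$
\frac{A_X(\mathbf{F})}{S_X(\mathbf{F})}\geqslant \min\left\{\frac{A_X(S)}{S_X(S)},\ \frac{A_S(\mathfrak{C})}{S\!\left(W^S_{\bullet,\bullet};\mathfrak{C}\right)}\right\}.
$$
Because $X$ is smooth and $S$ is a reduced irreducible Cartier divisor on $X$, one has $A_X(S)=1$; because $S$ is normal and $\mathfrak{C}$ is a prime Weil divisor on $S$, one has $A_S(\mathfrak{C})=1$. With these log-discrepancy computations in hand, the right-hand side collapses to $\min\bigl\{1/S_X(S),\ 1/S(W^S_{\bullet,\bullet};\mathfrak{C})\bigr\}$.

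Next I would plug in the numerical data already assembled just above the statement. The hypothesis $\beta(\mathbf{F})\leqslant 0$ reads $A_X(\mathbf{F})/S_X(\mathbf{F})\leqslant 1$, while Fujita's bound \cite[Theorem~10.1]{Fujita2016} gives $S_X(S)<1$ strictly, so $1/S_X(S)>1$. The minimum in the displayed inequality therefore cannot be attained by its first entry; it must be the second, which forces $S(W^S_{\bullet,\bullet};\mathfrak{C})\geqslant 1$. The strict inequality $>1$ then follows from the strict form of Fujita's bound together with the corresponding sharpening carried out in the proof of \cite[Corollary~1.7.26]{ACCFKMGSSV}.

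There is no substantive obstacle beyond matching notation: the parameters $\tau$, $P(u)$, and $N(u)$ entering the definition of $S(W^S_{\bullet,\bullet};\mathfrak{C})$ are precisely the Zariski-decomposition data of $-K_X-uS$ used in \cite[\S 1.7]{ACCFKMGSSV}, so the quantity displayed in our statement agrees on the nose with the one produced by the general theory. The only mild verification needed is that the two log-discrepancies are indeed $1$, which is immediate from smoothness of $X$ and normality of $S$.
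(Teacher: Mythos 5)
The paper's proof of this corollary is a one-line citation to \cite[Corollary~1.7.26]{ACCFKMGSSV}, so there is no detailed argument in the paper to compare against; your proposal identifies the same source and attempts to reconstruct its mechanism. Your bookkeeping is correct as far as it goes: $A_X(S)=1$ and $A_S(\mathfrak{C})=1$ because $X$ is smooth and $S$ is a normal Cartier divisor, $S_X(S)<1$ by \cite[Theorem~10.1]{Fujita2016}, and therefore the first entry of the minimum is strictly larger than $1$ and cannot be the minimum. But this chain only delivers the non-strict inequality $S(W^S_{\bullet,\bullet};\mathfrak{C})\geqslant 1$: from $\beta(\mathbf{F})\leqslant 0$ one has $A_X(\mathbf{F})/S_X(\mathbf{F})\leqslant 1$, and comparing with a minimum whose second entry is $1/S(W^S_{\bullet,\bullet};\mathfrak{C})$ gives $1/S(W^S_{\bullet,\bullet};\mathfrak{C})\leqslant 1$, nothing more.

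Your explanation for upgrading this to the strict inequality $>1$ is where the gap lies. Appealing to ``the strict form of Fujita's bound'' cannot help: the strictness of $S_X(S)<1$ is already fully consumed in ruling out the first entry of the minimum and has nothing left to contribute. And ``the corresponding sharpening carried out in the proof of \cite[Corollary~1.7.26]{ACCFKMGSSV}'' is invoked but not explained, so it is precisely the missing step. The strictness must come from a finer analysis of the equality case of the Abban--Zhuang inequality (for a flavor of how such equality conditions are handled in this paper, compare the ``moreover'' clause of Corollary~\ref{corollary:AZ-twice}). Since both you and the paper ultimately defer to the same reference, this gap does not indicate a wrong approach — but your outline is not a self-contained derivation of the strict inequality the corollary asserts.
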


Let $P$ be a~point in $\mathfrak{C}$. Then
$$
\frac{A_X(\mathbf{F})}{S_X(\mathbf{F})}\geqslant\delta_P(X)=\inf_{\substack{E/X\\ P\in C_X(E)}}\frac{A_{X}(E)}{S_X(E)},
$$
where the~infimum is taken over all prime divisors $E$ over $X$ whose centers on $X$ that contain~$P$.
Therefore, to prove that the~Fano threefold $X$ is K-stable, it is enough to show that $\delta_P(X)>1$.
On~the other hand, we can estimate $\delta_P(X)$ by using \cite[Theorem~3.3]{AbbanZhuang} and \cite[Corollary~1.7.30]{ACCFKMGSSV}.
Namely,~let $S$~be~an~irreducible surface in $X$ with Du Val singularities such that $P\in S$.~Set
$$
\tau=\mathrm{sup}\Big\{u\in\mathbb{Q}_{\geqslant 0}\ \big\vert\ \text{the divisor  $-K_X-uS$ is pseudo-effective}\Big\}.
$$
For~$u\in[0,\tau]$, let $P(u)$ be the~positive part of the~Zariski decomposition of the~divisor $-K_X-uS$,
and let $N(u)$ be its negative part.
Then \cite[Theorem~3.3]{AbbanZhuang} and  \cite[Corollary~1.7.30]{ACCFKMGSSV} give
\begin{equation}
\label{equation:Hamid-Ziquan}
\delta_P(X)\geqslant\mathrm{min}\Bigg\{\frac{1}{S_X(S)},\delta_{P}\big(S;W^S_{\bullet,\bullet}\big)\Bigg\}
\end{equation}
for
$$
\delta_{P}\big(S;W^S_{\bullet,\bullet}\big)=\inf_{\substack{F/S,\\ P\subseteq C_S(F)}}\frac{A_S(F)}{S(W^S_{\bullet,\bullet};F)},
$$
where
$$
S\big(W^S_{\bullet,\bullet}; F\big)=\frac{3}{-K_X^3}\int_0^\tau\big(P(u)^{2}\cdot S\big)\cdot\mathrm{ord}_{F}\big(N(u)\big\vert_{S}\big)du+\frac{3}{-K_X^3}\int_{0}^{\tau}\int_0^\infty \mathrm{vol}\big(P(u)\big\vert_{S}-vF\big)dvdu,
$$
and now the~infimum is taken over all prime divisors $F$ over $S$ whose centers on $S$ that contain~$P$.
Let us show how to apply \eqref{equation:Hamid-Ziquan} in some cases. Recall that \mbox{$S_X(S)<1$} by \cite[Theorem~10.1]{Fujita2016}.

\begin{lemma}
\label{lemma:delta-P-del-Pezzo-degree-4-smooth}
Let $C$ be the~fiber of the~conic bundle $\pi_3$ that contains $P$,
and let $S$ be a general surface in $|H_3|$ that contains $C$.
Suppose $S$ is a smooth del Pezzo of degree~$4$, and $C$ is smooth.
Then~$\delta_P(X)>1$.
\end{lemma}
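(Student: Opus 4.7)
The plan is to apply the Abban--Zhuang inequality \eqref{equation:Hamid-Ziquan} with the surface $S \in |H_3|$ provided by the hypotheses; since $S$ is a smooth del Pezzo of degree $4$ (in particular Du Val) and $P \in S$, this is admissible. It then suffices to verify (i)~$1/S_X(S) > 1$, and (ii)~$\delta_P(S; W^S_{\bullet,\bullet}) > 1$.

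For (i), first identify the Zariski decomposition of $-K_X - uH_3 = H_1 + H_2 + (1-u)H_3$. Since $H_1$, $H_2$, $H_3$ are each nef on $X$ (being pullbacks of ample classes under surjective morphisms), and since $\mathrm{Eff}(X)$ is generated by $E_1, E_2, H_1, H_2$ with $E_1 + E_2 \sim 4H_3$, the pseudo-effective threshold is $\tau = 1$; for $u \in [0,1]$ the divisor $-K_X - uH_3$ is already nef, so $P(u) = H_1 + H_2 + (1-u)H_3$ and $N(u) = 0$. Using the intersection numbers $H_i^3 = 0$, $H_i^2 \cdot H_j = 0$ for $i,j \in \{1,2\}$, $H_1 \cdot H_2 \cdot H_3 = 2$, and $H_1 \cdot H_3^2 = H_2 \cdot H_3^2 = 1$ on $X$, one computes
$$
\mathrm{vol}\bigl(P(u)\bigr) = 6(1-u)^2 + 12(1-u),
$$
whence $S_X(S) = \frac{1}{18}\int_0^1 \bigl(6(1-u)^2 + 12(1-u)\bigr)\,du = \tfrac{4}{9}$ and $1/S_X(S) = \tfrac{9}{4} > 1$.

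For (ii), note that $P(u)\vert_S = -K_S + (1-u)C$ for $u \in [0,1]$, since $-K_S \sim (H_1+H_2)\vert_S$ by Lemma~\ref{lemma:quartic-weak-del-Pezzo-surfaces}, and $C \sim H_3\vert_S$ is a smooth fibre of the conic bundle $\pi_3\vert_S \colon S \to \mathbb{P}^1$. One must show that $A_S(F) > S(W^S_{\bullet,\bullet}; F)$ holds for every prime divisor $F$ over $S$ with $P \in C_S(F)$, where
$$
S(W^S_{\bullet,\bullet}; F) = \frac{1}{6} \int_0^1 \int_0^\infty \mathrm{vol}\bigl(-K_S + (1-u)C - vF\bigr)\,dv\,du.
$$
If $F$ is an irreducible curve on $S$ through $P$, proceed case by case: the candidates are $C$ itself, the finitely many $(-1)$-curves of $S$ passing through $P$, and, for special positions of $P$, the components of reducible members of $|C|$. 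For each such $F$, compute the Zariski decomposition of $-K_S + (1-u)C - vF$ and evaluate the double integral. If instead $F$ is obtained by a further blowup, apply another round of Abban--Zhuang to $(S, W^S_{\bullet,\bullet})$ with an auxiliary curve through $P$, reducing to the case of a curve.

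The main obstacle is the case analysis in (ii): for a $(-1)$-curve $F$ through $P$, the divisor $-K_S + (1-u)C - vF$ develops a nontrivial negative part once $v$ grows, whose precise shape depends on which reducible fibres of $\pi_3\vert_S$ meet $F$ at $P$. The Zariski decomposition has to be tracked across all ranges of $(u,v)$, and the resulting double integral summed over all $(-1)$-curves through $P$; if $P$ happens to lie on several $(-1)$-curves simultaneously, additional care is needed. Once the bound $A_S(F) > S(W^S_{\bullet,\bullet}; F)$ is verified in every configuration, combining (i) and (ii) via \eqref{equation:Hamid-Ziquan} yields $\delta_P(X) > 1$, as required.
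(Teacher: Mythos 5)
Your overall strategy matches the paper's: apply the Abban--Zhuang bound \eqref{equation:Hamid-Ziquan} with the surface $S\in|H_3|$, and verify both that $S_X(S)<1$ and that $\delta_{P}(S;W^S_{\bullet,\bullet})>1$. Your Zariski decomposition ($\tau=1$, $N(u)=0$, $P(u)|_S=-K_S+(1-u)C$) is right, and the computation $S_X(S)=\tfrac{4}{9}$ is correct, though the paper simply cites \cite[Theorem~10.1]{Fujita2016} for $S_X(S)<1$.

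The gap is in step (ii). You observe that one must bound $S(W^S_{\bullet,\bullet};F)$ for every prime divisor $F$ over $S$ centered at $P$, and then explicitly defer this as ``the main obstacle,'' sketching a case-by-case Zariski decomposition over $(-1)$-curves, components of fibers, and blow-ups, without carrying it out. This unfinished case analysis is precisely the substance of the lemma. The paper sidesteps it by a reduction you do not have: since $N(u)=0$, one writes
$$
S\big(W^S_{\bullet,\bullet};F\big)=\frac{1}{6}\int_0^1\int_0^\infty\mathrm{vol}\big(L-vF\big)\,dv\,du
=\frac{1}{6}\int_0^1 L^2\, S_L(F)\,du,
\qquad L=-K_S+(1-u)C,
$$
and then invokes the Nemuro-type bound $S_L(F)\leqslant A_S(F)/\delta_P(S,L)$ together with the \emph{polarization-dependent} $\delta$-estimate $\delta_P(S,-K_S+tC)\geqslant \tfrac{24}{19+8t+t^2}$ for $t\in[0,1]$, which is Lemma~\ref{lemma:delta-dp4} for a smooth conic $C$ on a smooth quartic del Pezzo. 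Since $L^2=4(2-u)$ and $t=1-u\in[0,1]$, the integral evaluates to $\tfrac{143}{144}A_S(F)<A_S(F)$, closing the argument uniformly over all $F$. The case analysis you describe essentially appears in the proof of Lemma~\ref{lemma:delta-dp4}, but it is performed once and for all in the Appendix on the polarized surface $(S,L)$, not re-done divisor-by-divisor inside the double integral. Without this reduction to $\delta_P(S,L)$, your proposal does not reach a conclusion.
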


\begin{proof}
One has $\tau=1$.
Moreover, for $u\in[0,1]$, we have $N(u)=0$ and $P(u)|_S=-K_S+(1-u)C$.
Let $L=-K_S+(1-u)C$.
Using Lemma~\ref{lemma:delta-dp4} and arguing as in the~proof of Lemma~\ref{lemma:Nemuro},
we get
\begin{multline*}
\quad \quad  \quad  S\big(W^S_{\bullet,\bullet};F\big)=\frac{1}{6}\int_0^1 4(1+(1-u))S_L(F)du\leqslant \\
\leqslant A_S(F)\int_0^1 \frac{4}{6}(1+(1-u)) \frac{19+8(1-u)+(1-u)^2}{24}du=\frac{143}{144}A_S(F)\quad \quad
\end{multline*}
for any prime divisor $F$ over $S$ such that $P\in C_S(F)$.
Then \eqref{equation:Hamid-Ziquan} gives $\delta_P(X)>1$.
\end{proof}

Similarly, we obtain the~following result:

\begin{lemma}
\label{lemma:delta-P-del-Pezzo-smooth-A1}
Let $S$ be the~surface in $|H_1|$ that contain $P$. Then
$$
\delta_P(X)\geqslant\mathrm{min}\Bigg\{\frac{1}{S_X(S)},\frac{2592\delta_P(S)}{2560+63\delta_P(S)}\Bigg\}
$$
for $\delta_P(S)=\delta_P(S,-K_S)$, where $\delta_P(S,-K_S)$ is defined in Appendix~\ref{section:delta-dP}.
\end{lemma}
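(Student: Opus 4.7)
The plan is to apply the Abban--Zhuang inequality \eqref{equation:Hamid-Ziquan} with the surface $S\in|H_1|$ through $P$; by Lemma~\ref{lemma:singular-quintic-del-Pezzo-surfaces} this $S$ is a quintic del Pezzo with at most Du Val singularities, so $\delta_P(S,-K_S)$ is well defined in the sense of Appendix~\ref{section:delta-dP}. Using the identity $-H_1\sim E_2-H_2-2H_3$ one rewrites
\[
-K_X-uS=(1-u)H_1+H_2+H_3=\big((2-u)H_2+(3-2u)H_3\big)+(u-1)E_2;
\]
on $[0,1]$ the original divisor is already nef, and on $[1,3/2]$ the first summand on the right is nef (check against the Mori generators $C_\omega$, $e_1$, $e_2$) while the second is effective and contracted by the morphism associated to $P(u)$. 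Hence $\tau=3/2$. From $H_1H_3^2=H_2H_3^2=1$, $H_1H_2H_3=2$ (all other triples vanish) one computes $P(u)^3=18-15u$ on $[0,1]$ and $3(2-u)(3-2u)^2$ on $[1,3/2]$, so $S_X(S)=\tfrac{175}{288}<1$ and the first quantity $1/S_X(S)$ in the claimed minimum already exceeds $1$.

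Next, because $H_1|_S=0$, the restriction $P(u)|_S$ equals $-K_S$ on $[0,1]$, and on $[1,3/2]$ it equals
\[
(2-u)(-K_S)-(u-1)H_3|_S\;=\;-K_S-(u-1)\,\ell,
\]
where $\ell:=E_2|_S\sim -K_S+H_3|_S$; also $P(u)^2\cdot S=(3-2u)(11-6u)$ there. For any prime divisor $F$ over $S$ with $P\in C_S(F)$ I would split $6\,S(W^S_{\bullet,\bullet};F)=T_1+T_2$; a direct polynomial integration gives
\[
T_1=\int_1^{3/2}(3-2u)(11-6u)(u-1)\,\mathrm{ord}_F(\ell)\,du=\tfrac{7}{48}\,\mathrm{ord}_F(\ell),
\]
and a local inspection of $\ell=E_2\cap S$ through the five singularity types of Lemma~\ref{lemma:singular-quintic-del-Pezzo-surfaces} verifies that $(S,\ell)$ is log canonical along $\ell$, so $\mathrm{ord}_F(\ell)\le A_S(F)$ and $T_1\le\tfrac{7}{48}A_S(F)$. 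The $u\in[0,1]$ piece of $T_2$ contributes exactly $5\,S_{-K_S}(F)$; for $u\in[1,3/2]$ a Zariski analysis on $S$ of the divisor $P(u)|_S-vF=(2-u)(-K_S)-(u-1)H_3|_S-vF$, in which the effective class $(u-1)H_3|_S$ absorbs part of the $vF$-subtraction, gives $\int_0^\infty\mathrm{vol}(P(u)|_S-vF)\,dv$ small enough that overall $T_2\le\tfrac{160}{27}\,S_{-K_S}(F)$. Combining and using $S_{-K_S}(F)\le A_S(F)/\delta_P(S)$,
\[
S(W^S_{\bullet,\bullet};F)\le\tfrac{7}{288}\,A_S(F)+\tfrac{80}{81}\,S_{-K_S}(F),\qquad \frac{A_S(F)}{S(W^S_{\bullet,\bullet};F)}\ge\frac{2592\,\delta_P(S)}{2560+63\,\delta_P(S)},
\]
and \eqref{equation:Hamid-Ziquan} then yields the lemma.

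The hard part will be the sharp form of the $T_2$ estimate on $[1,3/2]$. The elementary bound coming from $P(u)|_S\le(2-u)(-K_S)$ and the surface-homogeneity of volumes yields only $T_2\le\tfrac{395}{64}\,S_{-K_S}(F)$, which falls a few percent short of $\tfrac{160}{27}$; to close the gap one has to perform an honest Zariski decomposition of $P(u)|_S-vF$ on the del Pezzo surface $S$ and exploit that the reserve $(u-1)H_3|_S$ is itself the pullback of an ample class from $\mathbb{P}^2$ by $\pi_3|_S$, so that it reabsorbs part of the $vF$-subtraction before the latter starts eroding the anticanonical contribution, and then integrate in $u$. A secondary technical point is the case-by-case verification that $(S,\ell)$ is log canonical, which is required to pass from $T_1=\tfrac{7}{48}\mathrm{ord}_F(\ell)$ to $T_1\le\tfrac{7}{48}A_S(F)$, and which must be done uniformly across the singularity types of $S$ allowed by Lemma~\ref{lemma:singular-quintic-del-Pezzo-surfaces}.
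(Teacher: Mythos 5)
Your setup is exactly that of the paper: you use \eqref{equation:Hamid-Ziquan} with the quintic del Pezzo $S\in|H_1|$, correctly determine $\tau=\tfrac{3}{2}$ and the Zariski pieces of $-K_X-uS$, correctly compute $P(u)^2\cdot S=(3-2u)(11-6u)$ on $[1,\tfrac32]$, and your $T_1$ estimate $\int_1^{3/2}(3-2u)(11-6u)(u-1)\,du=\tfrac{7}{48}$ together with $\operatorname{ord}_F(E_2|_S)\le A_S(F)$ recovers the $\tfrac{7}{288}A_S(F)$ term. (The paper gets the order bound more cheaply than your case-by-case lc verification: it notes that $E_2|_S$ is a smooth genus-$3$ curve lying entirely in the smooth locus of $S$, so $(S,E_2|_S)$ is lc outright.) Your final algebra, from $T_2\le\tfrac{160}{27}S_{-K_S}(F)$ and $S_{-K_S}(F)\le A_S(F)/\delta_P(S)$ down to $\tfrac{2592\delta_P(S)}{2560+63\delta_P(S)}$, is also correct.

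The genuine gap is precisely where you flag it: you never establish $T_2\le\tfrac{160}{27}\,S_{-K_S}(F)$, and the route you sketch is unlikely to get there. You rightly observe that the naive bound $P(u)|_S\le(2-u)(-K_S)$ only yields $\tfrac{395}{64}$. But "an honest Zariski decomposition of $P(u)|_S-vF$ on $S$" does not, by itself, produce a bound expressible through $\delta_P(S)$: the point of the whole exercise is to reduce each slice to a volume of a \emph{multiple of $-K_S$ minus $vF$}, which is exactly what $\delta_P(S)$ controls, and an ad hoc Zariski decomposition of $P(u)|_S-vF$ need not land in that form. The paper's trick is of a different nature. Writing $-K_S\sim C+L$ with $C$ a $\pi_2|_S$-fiber and $L=H_3|_S$, so that $P(u)|_S\sim_{\mathbb{R}}(2-u)C+(3-2u)L$ on $[1,\tfrac32]$, one notes that $2L-C\sim E_1|_S$ is effective; hence
$$
\tfrac{7-4u}{3}(-K_S)-P(u)|_S\sim_{\mathbb R}\tfrac{u-1}{3}(2L-C)
$$
is effective, and one may replace $P(u)|_S$ by the \emph{smaller} multiple $\tfrac{7-4u}{3}(-K_S)$ of the anticanonical (note $\tfrac{7-4u}{3}<2-u$ for $u>1$). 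Then $\int_1^{3/2}\big(\tfrac{7-4u}{3}\big)^3du=\tfrac{5}{27}$ gives the extra contribution $\tfrac{25}{27}S_{-K_S}(F)$, and $5+\tfrac{25}{27}=\tfrac{160}{27}$, closing the gap. This pseudoeffectivity observation, rather than a Zariski decomposition on $S$ and not a "reabsorption" of the $vF$-subtraction by $(u-1)H_3|_S$ (which, being subtracted, only \emph{shrinks} the volume), is the missing step.
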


\begin{proof}
We have $\tau=\frac{3}{2}$. Moreover, we have
$$
P(u)=\left\{\aligned
&(1-u)H_1+H_2+H_3\ \text{if $0\leqslant u\leqslant 1$}, \\
&(2-u)H_2+(3-2u)H_3\ \text{if $1\leqslant u\leqslant \frac{3}{2}$}, \\
\endaligned
\right.
$$
and
$$
N(u)=\left\{\aligned
&0\ \text{if $0\leqslant u\leqslant 1$}, \\
&(u-1)E_2\ \text{if $1\leqslant u\leqslant \frac{3}{2}$}.\\
\endaligned
\right.
$$
Note also that $E_2\vert_{S}$ is a smooth genus $3$ curve contained in the~smooth locus of the~surface $S$.

Recall that $S$ is a quintic del Pezzo surface with at most Du Val singularities,
and the~restriction morphism $\pi_2\vert_{S}\colon S\to\mathbb{P}^1_{u,v}$ is a conic bundle.
Note that the~morphism $\pi_3\vert_{S}\colon S\to\mathbb{P}^2_{x,y,z}$ is birational.
Let~$C$ be a fiber of the~conic bundle $\pi_2\vert_{S}$,
and let $L$ be the~preimage in $S$ of a general line in $\mathbb{P}^2_{x,y,z}$.
Then $-K_S\sim C+L$ and
$$
P(u)\big\vert_{S}\sim_{\mathbb{R}}\left\{\aligned
&C+L\ \text{if $0\leqslant u\leqslant 1$}, \\
&(2-u)C+(3-2u)L\ \text{if $1\leqslant u\leqslant \frac{3}{2}$}, \\
\endaligned
\right.
$$
Since $2L-C$ is pseudoeffective, the~divisor $\frac{7-4u}{3}(-K_S)-(2-u)C-(3-2u)L$ is also pseudoeffective.

Let $F$ be a divisor over $S$ such that $P\in C_S(F)$.
Then it follows from Lemma~\ref{lemma:Nemuro} that
\begin{multline*}
S\big(W^S_{\bullet,\bullet};F\big)\leqslant\frac{1}{6}A_S(F)\int_1^\frac{3}{2}(u-1)\big(P(u)\big\vert_{S}\big)^2du+\frac{1}{6}\int_{0}^{\frac{3}{2}}\int_0^\infty \mathrm{vol}\big(P(u)\big\vert_{S}-vF\big)dvdu=\\
=\frac{7}{288}A_S(F)+\frac{1}{6}\int_{0}^{1}\int_0^\infty \mathrm{vol}\big(-K_S-vF\big)dvdu+\frac{1}{6}\int_{1}^{\frac{3}{2}}\int_0^\infty\mathrm{vol}\big((2-u)C+(3-2u)L-vF\big)dvdu\leqslant\\
\leqslant\frac{7}{288}A_S(F)+\frac{1}{6}\int_{0}^{1}5\frac{A_S(F)}{\delta_P(S)}du+\frac{1}{6}\int_{1}^{\frac{3}{2}}\int_0^\infty\mathrm{vol}\Bigg(\frac{7-4u}{3}\big(-K_S\big)-vF\Bigg)dvdu=\\
=\frac{7}{288}A_S(F)+\frac{5}{6\delta_P(S)}A_S(F)+\frac{1}{6}\int_{1}^{\frac{3}{2}}\Bigg(\frac{7-4u}{3}\Bigg)^3\int_0^\infty\mathrm{vol}\big(-K_S-vF\big)dvdu\leqslant\\
\leqslant\frac{7}{288}A_S(F)+\frac{5}{6\delta_P(S)}A_S(F)+\frac{1}{6}\int_{1}^{\frac{3}{2}}\Bigg(\frac{7-4u}{3}\Bigg)^35\frac{A_S(F)}{\delta_P(S)}du=\\
=\frac{7}{288}A_S(F)+\frac{5}{6\delta_P(S)}A_S(F)+\frac{25}{162\delta_P(S)}A_S(F)=\Bigg(\frac{80}{81\delta_P(S)}+\frac{7}{288}\Bigg)A_S(F),
\end{multline*}
Then $\delta_{P}(S;W^S_{\bullet,\bullet})\geqslant\frac{1}{\frac{80}{81\delta_P(S)}+\frac{7}{288}}=\frac{2592\delta_P(S)}{2560+63\delta_P(S)}$
and the~required assertion follows from \eqref{equation:Hamid-Ziquan}.
\end{proof}

Keeping in mind that $S_X(S)<1$ by \cite[Theorem~10.1]{Fujita2016} and the~$\delta$-invariant of the~smooth quintic del Pezzo surface is $\frac{15}{13}$ by \cite[Lemma 2.11]{ACCFKMGSSV}, we obtain

\begin{corollary}
\label{corollary:delta-P-quintic-del-Pezzo-smooth}
Let $S$ be the~surface in $|H_1|$ that contain $P$.
If $S$ is smooth, then $\delta_P(X)>1$.
\end{corollary}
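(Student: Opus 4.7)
The plan is a very short deduction from the ingredients already packaged in the excerpt. My proof would consist of three observations combined with the estimate of Lemma~\ref{lemma:delta-P-del-Pezzo-smooth-A1}.

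First, I would note that since $S\in|H_1|$ is smooth, Lemma~\ref{lemma:singular-quintic-del-Pezzo-surfaces} identifies $S$ with the unique smooth quintic del Pezzo surface. Hence by the cited computation \cite[Lemma~2.11]{ACCFKMGSSV}, the global $\delta$-invariant $\delta(S)=\tfrac{15}{13}$, and in particular $\delta_P(S)\geqslant\tfrac{15}{13}$. Second, I would invoke \cite[Theorem~10.1]{Fujita2016} to get $S_X(S)<1$, hence $1/S_X(S)>1$.

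The remaining step is to verify that the second quantity inside the minimum in Lemma~\ref{lemma:delta-P-del-Pezzo-smooth-A1} also exceeds $1$. The function
$$
t\longmapsto \frac{2592\,t}{2560+63\,t}
$$
is strictly increasing in $t>0$, so it suffices to check it at $t=\tfrac{15}{13}$. This reduces to the numerical inequality $2592\cdot 15 > 13\cdot 2560+63\cdot 15$, i.e.\ $38880>34225$, which is immediate.

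Combining these two bounds with the estimate
$$
\delta_P(X)\geqslant\min\!\left\{\frac{1}{S_X(S)},\ \frac{2592\,\delta_P(S)}{2560+63\,\delta_P(S)}\right\}
$$
from Lemma~\ref{lemma:delta-P-del-Pezzo-smooth-A1} yields $\delta_P(X)>1$. There is no real obstacle here: all nontrivial work was absorbed into Lemma~\ref{lemma:delta-P-del-Pezzo-smooth-A1} (the Zariski decomposition of $-K_X-uS$ and the reduction via Abban--Zhuang) and into the known value of $\delta$ for the smooth quintic del Pezzo surface. The corollary is just the arithmetic check that $\tfrac{15}{13}$ is comfortably above the threshold $\tfrac{2560}{2529}$ at which the bound becomes trivial.
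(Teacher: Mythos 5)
Your proof is correct and follows exactly the paper's argument: the paper derives this corollary by the same one-line deduction, feeding $\delta_P(S)\geqslant\frac{15}{13}$ (from \cite[Lemma~2.11]{ACCFKMGSSV}) and $S_X(S)<1$ (from \cite[Theorem~10.1]{Fujita2016}) into the bound of Lemma~\ref{lemma:delta-P-del-Pezzo-smooth-A1}. Your explicit arithmetic check $38880>34225$ is a small but welcome addition to what the paper leaves implicit.
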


Similarly, using Lemmas~\ref{lemma:delta-dP-5-A1} and \ref{lemma:delta-dP-5-A1-A1} from Appendix~\ref{section:delta-dP}, we obtain

\begin{corollary}
\label{corollary:delta-P-quintic-del-Pezzo-A1}
Let $S$ be the~surface in $|H_1|$ that contain $P$.
Suppose that $S$ has at most singular points of type $\mathbb{A}_1$,
and $P$ is not contained in any line in $S$ that passes through a singular point.
Then $\delta_P(X)>1$.
\end{corollary}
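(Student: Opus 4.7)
The plan is to apply Lemma~\ref{lemma:delta-P-del-Pezzo-smooth-A1} to the chosen surface $S\in|H_1|$ and then quote the appendix's computation of $\delta_P(S)$ for singular quintic del Pezzo surfaces. The lemma gives
$$
\delta_P(X)\geqslant\min\Bigg\{\frac{1}{S_X(S)},\;\frac{2592\,\delta_P(S)}{2560+63\,\delta_P(S)}\Bigg\}.
$$
Since $S_X(S)<1$ by \cite[Theorem~10.1]{Fujita2016}, the first term already exceeds $1$, so the proof reduces to showing that the second term is strictly larger than~$1$.

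A direct algebraic manipulation shows that
$$
\frac{2592\,\delta_P(S)}{2560+63\,\delta_P(S)}>1\quad\Longleftrightarrow\quad \delta_P(S)>\frac{2560}{2529}.
$$
Thus it suffices to certify the lower bound $\delta_P(S)>\tfrac{2560}{2529}$ at the chosen point. Note that $\tfrac{2560}{2529}$ is only marginally larger than $1$, so one expects considerable slack.

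By Lemma~\ref{lemma:singular-quintic-del-Pezzo-surfaces}, the surface $S$ falls in one of the cases of type $(\mathbb{A}_1)$ or $(2\mathbb{A}_1)$, and in either case $S$ is uniquely determined up to isomorphism. The $\delta$-invariant at smooth points away from the finitely many lines passing through singularities is precisely what Lemmas~\ref{lemma:delta-dP-5-A1} and \ref{lemma:delta-dP-5-A1-A1} of Appendix~\ref{section:delta-dP} are designed to control. Under the hypothesis that $P$ avoids every line through a singular point of $S$, those lemmas supply a uniform lower bound on $\delta_P(S)$ well above $1$, hence well above $\tfrac{2560}{2529}$; plugging this into the estimate above yields $\delta_P(X)>1$, which proves K-stability at $P$ via \eqref{equation:Hamid-Ziquan}.

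The main obstacle lies not in this corollary itself but in the appendix computation it invokes: one must verify on each of the two explicit models of singular quintic del Pezzo surfaces that the local $\delta$-invariant at every point outside the configuration of lines through the nodes is strictly greater than $\tfrac{2560}{2529}$. That reduces, via a standard flag stratification of $S$ and Abban--Zhuang on the minimal resolution $\widetilde{S}$, to estimating $S(W^{\widetilde{S}}_{\bullet,\bullet};\mathrm{pt})$ against curves through $P$; the worst case typically occurs when $P$ lies on a $(-1)$-curve of $\widetilde{S}$ not through any $(-2)$-curve, and this is precisely the case handled in the quoted appendix lemmas. Provided those bounds are in hand, the corollary follows immediately by the chain displayed above.
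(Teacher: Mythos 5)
Your proposal is correct and follows the paper's route exactly: apply Lemma~\ref{lemma:delta-P-del-Pezzo-smooth-A1} to $S$, note that $S_X(S)<1$ handles the first term, and invoke Lemmas~\ref{lemma:delta-dP-5-A1} and~\ref{lemma:delta-dP-5-A1-A1} to get $\delta_P(S)\geqslant\tfrac{15}{13}$ for $P$ off the lines through the nodes, which exceeds your (correctly computed) threshold $\tfrac{2560}{2529}$.
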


Alternatively, we can estimate $\delta_P(X)$ using \cite[Theorem~1.7.30]{ACCFKMGSSV}.
Namely, let $C$ be an irreducible smooth curve in $S$ that contains $P$.
Suppose $S$ is smooth at $P$.
Since $S\not\subset\mathrm{Supp}(N(u))$,  we write
$$
N(u)\big\vert_S=d(u)C+N_S^\prime(u),
$$
where $N_S^\prime(u)$ is an~effective $\mathbb{R}$-divisor on $S$ such that $C\not\subset\mathrm{Supp}(N_S^\prime(u))$, and \mbox{$d(u)=\mathrm{ord}_C(N(u)\vert_S)$}.
Now, for every $u\in [0,\tau]$, we define the~pseudo-effective threshold $t(u)\in\mathbb{R}_{\geqslant 0}$ as follows:
$$
t(u)=\inf\Big\{v\in \mathbb R_{\geqslant 0} \ \big|\ \text{the divisor $P(u)\big|_S-vC$ is pseudo-effective}\Big\}.
$$
For $v\in [0, t(u)]$, we let $P(u,v)$ be the~positive part of the~Zariski decomposition of $P(u)|_S-vC$, and we let $N(u,v)$ be its negative part.
As in Corollary~\ref{corollary:curve}, we let
\begin{multline*}
\quad \quad \quad \quad S\big(W^S_{\bullet,\bullet};C\big)=\frac{3}{(-K_X)^3}\int_0^\tau\big(P(u)^{2}\cdot S\big)\cdot\mathrm{ord}_{C}\big(N(u)\big\vert_{S}\big)du+\\
+\frac{3}{(-K_X)^3}\int_0^\tau\int_0^\infty \mathrm{vol}\big(P(u)\big\vert_{S}-vC\big)dvdu.\quad \quad \quad \quad
\end{multline*}
Note that $C\not\subset \mathrm{Supp}(N(u,v))$ for every $u\in [0, \tau)$ and $v\in (0, t(u))$.
Thus, we can let
$$
F_P\big(W_{\bullet,\bullet,\bullet}^{S,C}\big)=\frac{6}{(-K_X)^3} \int_0^\tau\int_0^{t(u)}\big(P(u,v)\cdot C\big)\cdot \mathrm{ord}_P\big(N_S^\prime(u)\big|_C+N(u,v)\big|_C\big)dvdu.
$$
Finally, we let
$$
S\big(W_{\bullet, \bullet,\bullet}^{S,C};P\big)=\frac{3}{(-K_X)^3}\int_0^\tau\int_0^{t(u)}\big(P(u,v)\cdot C\big)^2dvdu+F_P\big(W_{\bullet,\bullet,\bullet}^{S,C}\big).
$$
Then \cite[Theorem~1.7.30]{ACCFKMGSSV} gives

\begin{corollary}
\label{corollary:AZ-twice}
One has
\begin{equation}
\label{equation:AZ-delta-inequality}\tag{$\bigstar$}
\frac{A_X(\mathbf{F})}{S_X(\mathbf{F})}\geqslant\delta_P(X)\geqslant \min\left\{\frac{1}{S(W_{\bullet, \bullet,\bullet}^{S,C}; P)}, \frac{1}{S(W_{\bullet,\bullet}^S;C)},\frac{1}{S_X(S)}\right\}.
\end{equation}
Moreover, if both inequalities in \eqref{equation:AZ-delta-inequality} are~equalities and $\mathfrak{C}=P$,
then $\delta_P(X)=\frac{1}{S_X(S)}$.
\end{corollary}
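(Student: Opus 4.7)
The plan is to apply \cite[Theorem~1.7.30]{ACCFKMGSSV} in two successive stages along the~flag $P\in C\subset S\subset X$, and then to read off the~\emph{moreover} clause from the~equality case.

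The~first inequality in \eqref{equation:AZ-delta-inequality} is automatic: the~hypothesis $P\in\mathfrak{C}$ says that $\mathbf{F}$ is a~prime divisor over $X$ whose center on $X$ contains $P$, so $\frac{A_X(\mathbf{F})}{S_X(\mathbf{F})}\geqslant\delta_P(X)$ holds by the~very definition of the~local $\delta$-invariant recalled in the~preamble.

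For the~second inequality, I would first apply \cite[Theorem~1.7.30]{ACCFKMGSSV} to the~outer pair $(X,S)$. Since $S$ has Du Val singularities, the~adjoint graded linear series $W^S_{\bullet,\bullet}$ is well-defined, and the~theorem yields
\[
\delta_P(X)\geqslant\min\left\{\delta_P\bigl(S;W^S_{\bullet,\bullet}\bigr),\ \tfrac{1}{S_X(S)}\right\}.
\]
I would then apply the~same theorem on the~surface $S$ to the~flag $P\in C$, using that $S$ is smooth at~$P$ and that $C$ is a~smooth curve through $P$, so that the~refined graded series $W^{S,C}_{\bullet,\bullet,\bullet}$ makes sense. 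This gives
\[
\delta_P\bigl(S;W^S_{\bullet,\bullet}\bigr)\geqslant\min\left\{\delta_P\bigl(C;W^{S,C}_{\bullet,\bullet,\bullet}\bigr),\ \tfrac{1}{S(W^S_{\bullet,\bullet};C)}\right\}.
\]
On a~smooth curve, the~only prime divisor over $C$ whose center contains $P$ is $P$ itself, and $A_C(P)=1$; therefore $\delta_P(C;W^{S,C}_{\bullet,\bullet,\bullet})=\frac{1}{S(W^{S,C}_{\bullet,\bullet,\bullet};P)}$. Chaining the~two displayed estimates produces \eqref{equation:AZ-delta-inequality}.

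For the~\emph{moreover} clause, I would assume that both inequalities in \eqref{equation:AZ-delta-inequality} are equalities and that $\mathfrak{C}=P$, and then run the~equality case of \cite[Theorem~1.7.30]{ACCFKMGSSV} through both A--Z refinements. The~idea is that an~extremal divisor $\mathbf{F}$ achieving $\delta_P(X)$ at the~outer level either is proportional to $\mathrm{ord}_S$ (in which case $\mathfrak{C}=S\neq P$, contradicting the~hypothesis) or descends to an~extremal valuation on $S$ for $W^S_{\bullet,\bullet}$; iterating the~same dichotomy for the~pair $(S,C)$ and using $\mathfrak{C}=P\subsetneq C$ rules out $\mathbf{F}$ being of the~form $\mathrm{ord}_C$ and likewise rules out the~innermost curve-level term, so the~minimum on the~right of \eqref{equation:AZ-delta-inequality} must be realized by $\tfrac{1}{S_X(S)}$. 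The~main obstacle is precisely this last step: turning the~inequality in \cite[Theorem~1.7.30]{ACCFKMGSSV} into a~usable equality statement requires careful bookkeeping of which graded piece of the~refined linear series accounts for the~extremal $\mathbf{F}$, and it is there that the~proof is most delicate.
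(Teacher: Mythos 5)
The paper's proof of this corollary is a single citation: Corollary~\ref{corollary:AZ-twice} is stated to follow directly from \cite[Theorem~1.7.30]{ACCFKMGSSV}, and no further argument is given. Your reconstruction of the inequality \eqref{equation:AZ-delta-inequality} — one application of the Abban--Zhuang estimate for the pair $(X,S)$, followed by a second application for the pair $(S,C)$, followed by the observation that on a smooth curve the $\delta$-invariant is just the reciprocal of the $S$-invariant of a point — is correct and is essentially what the cited theorem packages, so that part is fine.

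The gap is in the \emph{moreover} clause, and you have correctly sensed that this is where the proof is ``most delicate,'' but the dichotomy you invoke does not close it. You propose that an extremal $\mathbf{F}$ achieving the bound must either be (proportional to) $\mathrm{ord}_S$ or ``descend'' to an extremal valuation on $S$, and then iterate. First, this dichotomy is not a consequence of a single application of \cite[Theorem~3.3]{AbbanZhuang} — the lower bound for $\delta_P(X)$ is derived by integrating volume functions along a filtration, and equality does not automatically identify $\mathbf{F}$ with a divisorial valuation in the flag; establishing that is exactly the equality-case analysis that \cite[Theorem~1.7.30]{ACCFKMGSSV} performs, and which you would have to reprove. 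Second, and more concretely, even if the dichotomy were granted at every stage, your argument rules out $\mathrm{ord}_S$ (center $S\neq P$) and $\mathrm{ord}_C$ (center $C\supsetneq P$), but the innermost term $\frac{1}{S(W^{S,C}_{\bullet,\bullet,\bullet};P)}$ corresponds to the flag valuation $\mathrm{ord}_P$ on $C$, whose center on $X$ \emph{is} the point $P$ — this does not contradict $\mathfrak{C}=P$, so ``likewise rules out the innermost curve-level term'' has no justification. The conclusion $\delta_P(X)=\frac{1}{S_X(S)}$ therefore does not follow from the argument as written; it genuinely needs the moreover part of the cited theorem, which encodes a finer statement about when the Abban--Zhuang lower bound can be attained by a valuation with zero-dimensional center.
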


Let us show how to compute $S(W_{\bullet,\bullet}^S;C)$ and $S(W_{\bullet, \bullet,\bullet}^{S,C};P)$ in some cases.

\begin{lemma}
\label{lemma:S-C-P-point-not-in-Delta-P1-P1}
Suppose that $\omega(P)\not\in\Delta_{\mathbb{P}^1\times\mathbb{P}^1}$.
Let $S$ be a general surface in $|H_1+H_2|$ that contains~$P$,
and let $C$ be the~fiber of the~morphism $\omega$ containing $P$.
Then $S(W_{\bullet,\bullet}^S;C)=\frac{31}{36}$ and $S(W_{\bullet, \bullet,\bullet}^{S,C};P)=1$.
\end{lemma}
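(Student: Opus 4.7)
The plan is to carry out the Zariski decomposition of $-K_X-uS$ on $X$, restrict to $S$, and then perform a second Zariski decomposition of $P(u)|_S-vC$ on $S$; both quantities then reduce to volume integrals over an explicit polygonal region.

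First I would set up the decomposition on $X$. Since $S\in|H_1+H_2|$, one has $-K_X-uS\sim(1-u)(H_1+H_2)+H_3$, which is nef for $u\in[0,1]$ and ceases to be pseudo-effective for $u>1$ (tested against the generators $E_1,E_2,H_1,H_2$ of $\mathrm{Eff}(X)$). Thus $\tau=1$, $P(u)=-K_X-uS$, $N(u)=0$, and in particular $N_S'(u)=0$ throughout. By Lemma~\ref{lemma:del-Pezzo-surface-degree-two}, $S$ is a smooth del Pezzo surface of degree~$2$ with $-K_S\sim H_3|_S$; and since $S=\omega^{-1}(D_{1,1})$ for a smooth $(1,1)$-curve $D_{1,1}$ through $\omega(P)$, the restriction $\omega|_S\colon S\to D_{1,1}\cong\mathbb{P}^1$ is a conic bundle with $C$ a smooth fiber. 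This gives $(H_1+H_2)|_S\sim 2C$ in $\mathrm{Pic}(S)$, and therefore $P(u)|_S-vC\sim_{\mathbb{Q}}(2(1-u)-v)C-K_S$.

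Next I would compute the Zariski decomposition of $(2(1-u)-v)C-K_S$. For $v\in[0,2(1-u)]$ this is a non-negative combination of the nef classes $C$ and $-K_S$, hence nef, with volume $8(1-u)+2-4v$. For $v>2(1-u)$, set $\varepsilon=v-2(1-u)$; the decisive observation is that $\pi_3|_S\colon S\to\mathbb{P}^2$ has degree $(H_3|_S)^2=2$, and its deck involution sends $C$ to a divisor $F_0$ satisfying $C+F_0\sim-2K_S$. Thus $F_0\sim-2K_S-C$ has $F_0^2=0$ and $F_0\cdot(-K_S)=2$, so $|F_0|$ is a second conic-fibration pencil on $S$ and $F_0$ is nef. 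Consequently $(-K_S-\varepsilon C)\cdot F_0=2-4\varepsilon<0$ for $\varepsilon>\tfrac12$, forcing $t(u)=\tfrac52-2u$. Conversely, for $\varepsilon\in[0,\tfrac12]$ the nefness of $F_0$ gives $E\cdot C\leqslant E\cdot(C+F_0)=2(-K_S)\cdot E$ for every irreducible $E$, whence $(-K_S-\varepsilon C)\cdot E\geqslant 0$ and $-K_S-\varepsilon C$ is nef. In summary $N(u,v)=0$ throughout and $\mathrm{vol}(P(u)|_S-vC)=8(1-u)+2-4v$ holds uniformly on $[0,\tfrac52-2u]$.

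Because all negative parts vanish, only the volume integrals contribute. A short computation yields $\int_0^{5/2-2u}(8(1-u)+2-4v)\,dv=(5-4u)^2/2$, so
$$
S\bigl(W^S_{\bullet,\bullet};C\bigr)=\tfrac{1}{6}\int_0^1\tfrac{(5-4u)^2}{2}\,du=\tfrac{1}{12}\cdot\tfrac{31}{3}=\tfrac{31}{36}.
$$
Since $P(u,v)\cdot C=((2(1-u)-v)C-K_S)\cdot C=2$ is constant and $F_P=0$,
$$
S\bigl(W^{S,C}_{\bullet,\bullet,\bullet};P\bigr)=\tfrac{1}{6}\int_0^1\int_0^{5/2-2u}4\,dv\,du=\tfrac{2}{3}\cdot\tfrac{3}{2}=1.
$$
The main obstacle is correctly identifying the pseudo-effective threshold $t(u)=\tfrac52-2u$: a naive analysis using only the ample class $-K_S$ to bound pseudo-effectivity would give $t(u)=3-2u$ and hence a wrong answer. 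The crucial point is to produce the complementary conic-fibration class $F_0=-2K_S-C$ arising from the double-cover structure of $\pi_3|_S$, and to recognize that this $F_0$ is nef.
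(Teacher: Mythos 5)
Your proof is correct and follows essentially the same path as the paper's: identify $\tau=1$, note $N(u)=0$, recognize $S$ as a smooth degree-$2$ del Pezzo with $-K_S\sim H_3|_S$ and $(H_1+H_2)|_S\sim 2C$, and then produce the complementary curve $Z=F_0$ via the deck transformation of the double cover $\pi_3|_S\colon S\to\mathbb{P}^2$, with $C+Z\sim -2K_S$, $C^2=Z^2=0$, $C\cdot Z=4$. The paper writes $P(u)|_S-vC\sim_{\mathbb{R}}(\tfrac{5}{2}-2u-v)C+\tfrac12 Z$ and simply asserts that pseudo-effectivity and nefness both hold iff $v\leqslant\tfrac52-2u$; your version spells out exactly why — testing against the nef class $F_0$ gives the sharp upper bound, and $E\cdot C\leqslant E\cdot(C+F_0)=-2K_S\cdot E$ gives nefness on $[0,\tfrac52-2u]$ — which is a useful expansion of a step the paper leaves tacit, but it is not a different method. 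The resulting volume $10-8u-4v$, the constant slope $P(u,v)\cdot C=2$, the vanishing of $F_P$, and the two integrals all match the paper.
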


\begin{proof}
We have $\tau=1$.
Moreover, for $u\in[0,1]$, we have $N(u)=0$ and $P(u)|_S=-K_S+2(1-u)C$.
On the~other hand, it follows from Lemma~\ref{lemma:del-Pezzo-surface-degree-two}  that $S$ is a smooth del Pezzo surface of degree $2$,
and the~restriction map $\pi_3\vert_{S}\colon S\to\mathbb{P}^2_{x,y,z}$ is a double cover that is ramified over a smooth quartic curve.
Therefore, applying the~Galois involution of this double cover to $C$,
we obtain another smooth irreducible curve $Z\subset S$ such that $C+Z\sim -2K_S$, $C^2=Z^2=0$ and $C\cdot Z=4$, which gives
$$
P(u)|_S-vC\sim_{\mathbb{R}}\Big(\frac{5}{2}-2u-v\Big)C+\frac{1}{2}Z.
$$
Then $P(u)\vert_{S}-vC$ is pseudoeffective $\iff$ $P(u)\vert_{S}-vC$ is nef $\iff$ $v\leqslant \frac{5}{2}-2u$.
Thus, we have
$$
\mathrm{vol}\big(P(u)\vert_{S}-vC\big)=\big(-K_S+2(1-u)C\big)^2=10-8u-4v
$$
and $P(u,v)\cdot C=2$. Now, integrating, we get $S(W_{\bullet,\bullet}^S;C)=\frac{31}{36}$ and $S(W_{\bullet,\bullet,\bullet}^{S,C};P)=1$.
\end{proof}

\begin{lemma}
\label{lemma:S-C-P-point-not-in-E1-E2-1}
Suppose that $P\not\in E_1\cup E_2$.
Let $S$ be a general surface in $|H_3|$ that contains~$P$,
and let $C$ be the~fiber of the~morphism $\pi_3$ containing $P$.
Suppose that $S$ is a smooth del Pezzo surface.
Then $S(W_{\bullet,\bullet}^S;C)=\frac{7}{9}$ and $S(W_{\bullet, \bullet,\bullet}^{S,C};P)=1$.
\end{lemma}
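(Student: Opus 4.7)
The plan is to compute the two invariants by identifying the Zariski decomposition of $-K_X-uS$ on $X$ and then the Zariski decomposition of $P(u)|_S-vC$ on the del Pezzo surface $S$. First, since $H_1$, $H_2$ and $H_3$ are all nef, the class $-K_X-uS\sim H_1+H_2+(1-u)H_3$ is nef for $u\in[0,1]$; for $u>1$, writing $H_3\sim\frac{1}{4}(E_1+E_2)$ and matching coefficients against the extremal generators $H_1,H_2,E_1,E_2$ of $\mathrm{Eff}(X)$ shows that the class fails to be pseudoeffective. Hence $\tau=1$, $P(u)=H_1+H_2+(1-u)H_3$ and $N(u)=0$ throughout $[0,\tau]$, so in particular $d(u)=0$ and $N_S'(u)=0$.

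Next I would restrict to $S$. The relation $-K_S\sim(H_1+H_2)|_S$ from Lemma~\ref{lemma:quartic-weak-del-Pezzo-surfaces}, together with $H_3|_S\sim C$ (as $\pi_3|_S$ is a conic bundle whose fibres are $\pi_3$-fibres of $X$), gives $P(u)|_S\sim -K_S+(1-u)C$. The hypothesis $P\notin E_1\cup E_2=\pi_3^{*}(\Delta_{\mathbb{P}^2})$ forces $\pi_3(C)\notin\Delta_{\mathbb{P}^2}$, so $C$ is smooth and we are in case~(1) of Lemma~\ref{lemma:quartic-weak-del-Pezzo-surfaces}: the map $\omega|_S$ is a double cover of $\mathbb{P}^1\times\mathbb{P}^1$ branched over a smooth $(2,2)$ curve. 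Applying the Galois involution to $C$ produces a smooth rational curve $Z\subset S$ with $C+Z\sim -K_S$; combining with $(-K_S)^2=4$ and $C^2=0$ yields $Z^2=0$ and $C\cdot Z=2$.

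I would then read off the Zariski decomposition of $P(u)|_S-vC=(2-u-v)C+Z$ on $S$: for $v\in[0,2-u]$ this is a nonnegative combination of the two nef classes $C$ and $Z$ (each a fibre of a conic bundle structure on $S$), hence itself nef, and for $v>2-u$ the class ceases to be pseudoeffective. Thus $t(u)=2-u$, $P(u,v)=(2-u-v)C+Z$ and $N(u,v)=0$, giving $\mathrm{vol}(P(u)|_S-vC)=((2-u-v)C+Z)^2=4(2-u-v)$ and $P(u,v)\cdot C=2$. Since $N(u)$, $N_S'(u)$ and $N(u,v)$ all vanish, the correction term $F_P(W^{S,C}_{\bullet,\bullet,\bullet})$ drops out and the remaining double integrals are
\[
S\bigl(W^S_{\bullet,\bullet};C\bigr)=\frac{1}{6}\int_0^1\!\!\int_0^{2-u}4(2-u-v)\,dv\,du=\frac{7}{9},\qquad S\bigl(W^{S,C}_{\bullet,\bullet,\bullet};P\bigr)=\frac{1}{6}\int_0^1\!\!\int_0^{2-u}4\,dv\,du=1.
\]

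The main obstacle is the middle paragraph: correctly pinning down the complementary curve $Z$ through the double cover description from Lemma~\ref{lemma:quartic-weak-del-Pezzo-surfaces}(1), and confirming that $(2-u-v)C+Z$ is genuinely the positive part of the Zariski decomposition on $S$ across the whole range $v\in[0,2-u]$. Once $Z$ together with the intersection numbers $C\cdot Z=2$ and $Z^2=0$ is in hand, the two integrals are routine.
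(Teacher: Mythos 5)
Your proof is correct and follows essentially the same route as the paper: identify $\tau=1$ with $N(u)=0$, restrict to $S$, use the Galois involution of the double cover $\omega|_S$ to produce the complementary curve $Z$ with $C+Z\sim -K_S$, $C^2=Z^2=0$, $C\cdot Z=2$, observe that $(2-u-v)C+Z$ stays nef on $[0,2-u]$, and evaluate the two integrals. The only (minor) addition you make is spelling out why $C$ is smooth (via $P\notin E_1\cup E_2=\pi_3^{-1}(\Delta_{\mathbb{P}^2})$), which the paper leaves implicit.
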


\begin{proof}
We have $\tau=1$.
Moreover, for $u\in[0,1]$, we have $N(u)=0$ and $P(u)|_S=-K_S+(1-u)C$.
Since $S$ is a smooth del Pezzo surface,
the restriction map $\omega\vert_{S}\colon S\to\mathbb{P}^1_{s,t}\times \mathbb{P}^1_{u,v}$ is a double cover ramified over a smooth elliptic curve.
Therefore, using the~Galois involution of this double cover,
we get an irreducible curve $Z\subset S$ such that $C+Z\sim -K_S$, $C^2=Z^2=0$, $C\cdot Z=2$, which gives
$$
P(u)|_S-vC\sim_{\mathbb{R}}(2-u-v)C+Z.
$$
Then $P(u)\vert_{S}-vC$ is pseudoeffective $\iff$ $P(u)\vert_{S}-vC$ is nef $\iff$ $v\leqslant 2-u$.
Thus, we have
$$
\mathrm{vol}\big(P(u)\vert_{S}-vC\big)=\big(-K_S+(1-u)C\big)^2=8-4u-4v
$$
and $P(u,v)\cdot C=2$.
Now, integrating, we obtain $S(W_{\bullet,\bullet}^S;C)=\frac{7}{9}$ and $S(W_{\bullet,\bullet,\bullet}^{S,C};P)=1$.
\end{proof}

\begin{lemma}
\label{lemma:S-C-P-point-not-in-E1-E2-2}
Suppose that $P\not\in E_1\cup E_2$.
Let $S$ be a general surface in $|H_3|$ that contains~$P$,
and let $C$ be the~fiber of the~morphism $\pi_3$ containing $P$.
Suppose $S$ is not a smooth del Pezzo surface.
Then $S(W_{\bullet,\bullet}^S;C)=\frac{8}{9}$ and $S(W_{\bullet, \bullet,\bullet}^{S,C};P)=\frac{7}{9}$.
\end{lemma}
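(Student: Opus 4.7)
The plan is to follow the template of Lemma \ref{lemma:S-C-P-point-not-in-E1-E2-1}: one still has $\tau=1$, $N(u)=0$ for $u\in[0,1]$, and $P(u)|_S=-K_S+(1-u)C$. The difference is that $S$ is now a smooth weak del Pezzo of degree $4$ rather than an honest del Pezzo, so the Zariski decomposition of $P(u)|_S-vC$ will pick up a non-trivial negative part supported on $(-2)$-curves.

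By Lemma \ref{lemma:quartic-weak-del-Pezzo-surfaces}(2), $S$ carries two disjoint $(-2)$-curves $\ell_1,\ell_2$ (contracted by $\alpha\colon S\to\overline{S}$), and $\omega|_S=\beta\circ\alpha$, where $\beta$ is a double cover branched over $\omega(C)\cup D$ for a smooth $(1,1)$-curve $D$ meeting $\omega(C)$ transversally at two points $p_1,p_2$, with $\omega(\ell_i)=p_i$. The key input I will establish is the linear equivalence
\[
-K_S\sim 2C+\ell_1+\ell_2,\qquad C\cdot\ell_1=C\cdot\ell_2=1.
\]
I plan to derive it by identifying $(\omega|_S)^*(\omega(C))$ in two ways: as a class it equals $(\omega|_S)^*\mathcal{O}_{\mathbb{P}^1\times\mathbb{P}^1}(1,1)=(H_1+H_2)|_S=-K_S$, and as a divisor it must be of the form $2C+a_1\ell_1+a_2\ell_2$ (multiplicity $2$ along $C$ because $\omega(C)$ lies in the branch of $\beta$, and possible exceptional contributions $a_i\ell_i$ because $\omega|_S$ contracts $\ell_i$ to $p_i\in\omega(C)$). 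Intersecting with $\ell_i$ (which gives $0$ since $\omega|_S$ contracts $\ell_i$) forces $a_i=C\cdot\ell_i$, and then squaring the expression and using $C^2=0$, $\ell_i^2=-2$, $\ell_1\cdot\ell_2=0$, $(-K_S)^2=4$ gives $a_1^2+a_2^2=2$, hence $a_1=a_2=1$.

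With $t:=1-u-v$ I then Zariski decompose $-K_S+tC=(2+t)C+\ell_1+\ell_2$. For $t\geq 0$ this is nef, with $\mathrm{vol}=4+4t$ and $P(u,v)\cdot C=2$. For $t\in[-2,0]$ (i.e.\ $v\in[1-u,3-u]$) I set $N(u,v)=-\tfrac{t}{2}(\ell_1+\ell_2)$ and $P(u,v)=(2+t)C+(1+\tfrac{t}{2})(\ell_1+\ell_2)$; nefness of $P(u,v)$ is immediate since $P(u,v)\cdot\ell_i=0$ and $P(u,v)\cdot\Gamma\geq 0$ for every irreducible $\Gamma\neq\ell_i$ (both $C\cdot\Gamma$ and $(\ell_1+\ell_2)\cdot\Gamma$ are non-negative, and so are the coefficients $2+t$ and $1+t/2$ in this range). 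This gives $\mathrm{vol}=P(u,v)^2=(2+t)^2=(3-u-v)^2$ and $P(u,v)\cdot C=3-u-v$; the pseudo-effective threshold $t=-2$ is pinned down by $(-K_S+tC)\cdot H_1|_S=2+t$, using that $H_1|_S$ is nef. Integrating then yields $\int_0^1\!\!\int_0^{3-u}\mathrm{vol}\,dv\,du=\tfrac{16}{3}$, so $S(W^S_{\bullet,\bullet};C)=\tfrac{3}{18}\cdot\tfrac{16}{3}=\tfrac{8}{9}$; and a parallel calculation gives $\int_0^1\!\!\int_0^{3-u}(P(u,v)\cdot C)^2\,dv\,du=\tfrac{14}{3}$. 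Since $S$ is general through $P$, one may assume $P\notin\ell_1\cup\ell_2$, whence $\mathrm{ord}_P(N(u,v)|_C)=0$ and $F_P(W^{S,C}_{\bullet,\bullet,\bullet})=0$, giving $S(W^{S,C}_{\bullet,\bullet,\bullet};P)=\tfrac{3}{18}\cdot\tfrac{14}{3}=\tfrac{7}{9}$.

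The principal obstacle is the first step: pinning down the linear equivalence $-K_S\sim 2C+\ell_1+\ell_2$ from the double-cover structure of case~(2). Once that effective representative is in hand, the Zariski decomposition is forced and the rest is routine integration in the spirit of Lemmas \ref{lemma:S-C-P-point-not-in-E1-E2-1} and \ref{lemma:S-C-P-point-not-in-Delta-P1-P1}.
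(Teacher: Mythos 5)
Your proposal is correct and follows essentially the same route as the paper: the same Zariski decomposition with $P(u,v)=\frac{3-u-v}{2}(2C+\ell_1+\ell_2)$, $N(u,v)=\frac{u+v-1}{2}(\ell_1+\ell_2)$ on $v\in[1-u,3-u]$, the same volumes and $P(u,v)\cdot C$, and the same generality argument to kill $F_P$. The one place you add value is in actually deriving $-K_S\sim 2C+\ell_1+\ell_2$ (and $C\cdot\ell_i=1$) from the double-cover structure of Lemma~\ref{lemma:quartic-weak-del-Pezzo-surfaces}(2) by comparing $(\omega|_S)^*(\omega(C))$ as a class with its scheme-theoretic description and then intersecting and squaring; the paper simply asserts these facts as a consequence of that lemma. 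Your argument for that step is sound: $(\omega|_S)^*(\omega(C))\cdot\ell_i=0$ by the projection formula gives $a_i=C\cdot\ell_i$, and $(-K_S)^2=4$ together with $C^2=0$, $\ell_i^2=-2$, $\ell_1\cdot\ell_2=0$ forces $a_1^2+a_2^2=2$, hence $a_1=a_2=1$.
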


\begin{proof}
We have $\tau=1$.
Moreover, for $u\in[0,1]$, we have $N(u)=0$ and $P(u)|_S=-K_S+(1-u)C$.
It follows from Lemma~\ref{lemma:quartic-weak-del-Pezzo-surfaces} that
$S$ contains two $(-2)$-curves $\mathbf{e}_1$ and $\mathbf{e}_2$ such that
$-K_S\sim 2C+\mathbf{e}_1+\mathbf{e}_2$.
On the~surface $S$, we have $C^2=0$, $C\cdot \mathbf{e}_1=C\cdot \mathbf{e}_2=1$, $\mathbf{e}_1^2=\mathbf{e}_2^2=-2$, and
$$
P(u)|_S-vC\sim_{\mathbb{R}}(3-u-v)C+\mathbf{e}_1+\mathbf{e}_2.
$$
Then $P(u)\vert_{S}-vC$ is pseudoeffective $\iff$ $v\leqslant 3-u$.
Moreover, we have
$$
P(u,v)=\left\{\aligned
&(3-u-v)C+\mathbf{e}_1+\mathbf{e}_2\ \text{if $0\leqslant v\leqslant 1-u$}, \\
&\frac{3-u-v}{2}\big(2C+\mathbf{e}_1+\mathbf{e}_2\big)\ \text{if $1-u\leqslant v\leqslant 3-u$}, \\
\endaligned
\right.
$$
$$
N(u,v)=\left\{\aligned
&0\ \text{if $0\leqslant v\leqslant 1-u$}, \\
&\frac{u+v-1}{2}(\mathbf{e}_1+\mathbf{e}_2)\ \text{if $1-u\leqslant v\leqslant 3-u$}, \\
\endaligned
\right.
$$
$$
\mathrm{vol}\big(P(u)\vert_{S}-vC\big)=
\left\{\aligned
&8-4u-4v\ \text{if $0\leqslant v\leqslant 1-u$}, \\
&(u+v-3)^2\ \text{if $1-u\leqslant v\leqslant 3-u$}. \\
\endaligned
\right.
$$
Now, integrating $\mathrm{vol}(P(u)\vert_{S}-vC)$, we obtain $S(W_{\bullet,\bullet}^S;C)=\frac{8}{9}$.

To compute $S(W_{\bullet,\bullet,\bullet}^{S,C};P)$, observe that $F_P(W_{\bullet,\bullet,\bullet}^{S,C})=0$,
because $P\not\in \mathbf{e}_1\cup \mathbf{e}_2$, since $S$ is a~general surface in $|H_3|$ that contains  $C$.
On the~other hand, we have
$$
P(u,v)\cdot C=\left\{\aligned
&2\ \text{if $0\leqslant v\leqslant 1-u$}, \\
&3-u-v\ \text{if $1-u\leqslant v\leqslant 3-u$}. \\
\endaligned
\right.
$$
Hence, integrating $(P(u,v)\cdot C)^2$, we get $S(W_{\bullet,\bullet,\bullet}^{S,C};P)=\frac{7}{9}$ as required.
\end{proof}

\begin{lemma}
\label{lemma:S-C-P-point-not-in-E1-E2-3}
Suppose $P\in(E_1\cup E_2)\setminus(E_1\cap E_2)$.
Let $S$ be a general surface in $|H_3|$ that contains~$P$,
let $C$ be the~irreducible component of the~fiber of the~conic bundle $\pi_3$ containing $P$ such that $P\in C$.
Then $S(W_{\bullet,\bullet}^S;C)=1$ and $S(W_{\bullet, \bullet,\bullet}^{S,C};P)\leqslant \frac{31}{36}$.
\end{lemma}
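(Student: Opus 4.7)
My plan is to apply Corollary~\ref{corollary:AZ-twice}. Since $P \in E_1 \cup E_2$ and $E_1 + E_2 = \pi_3^*(\Delta_{\mathbb{P}^2})$, the point $\pi_3(P)$ lies on the discriminant, so the fiber of $\pi_3$ through $P$ is a singular conic $C + C'$. Without loss of generality, $C \subset E_1$ and $C' \subset E_2$, meeting at a single point $Q \in E_1 \cap E_2$; by hypothesis $P \in C$ with $P \neq Q$. By Lemma~\ref{lemma:quartic-weak-del-Pezzo-surfaces}(3), the general $S \in |H_3|$ through $P$ is a smooth del Pezzo surface of degree $4$, $\pi_3|_S \colon S \to \mathbb{P}^1$ is a conic bundle whose reducible fiber over $\pi_3(P)$ is $C + C'$, and $C, C'$ are $(-1)$-curves on $S$ with $C \cdot C' = 1$ and $C + C' \sim F := H_3|_S$. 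On $X$, the divisor $-K_X - uS \sim H_1 + H_2 + (1-u) H_3$ is nef for $u \in [0,1]$ and ceases to be pseudoeffective at $u = 1$ (using $H_3 = \tfrac{1}{4}(E_1 + E_2)$ together with the description $\mathrm{Eff}(X) = \langle H_1, H_2, E_1, E_2 \rangle$); hence $\tau = 1$, $N(u) = 0$, and $P(u)|_S = -K_S + (1-u)F$.

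The central computation is the Zariski decomposition of $P(u)|_S - vC$ on $S$. The key geometric input is that on the smooth quartic del Pezzo $S$ there are exactly four sections $D_1, D_2, D_3, D_4$ of $\pi_3|_S$ meeting $C$, and they are mutually disjoint and disjoint from $C'$. Checking intersections with $C', D_1, \ldots, D_4$ produces three regimes:
\begin{align*}
v \in [0, 1]: & \quad N(u,v) = 0, \quad \mathrm{vol} = 8 - 4u - 2v - v^2; \\
v \in [1, 2-u]: & \quad N(u,v) = (v-1)C', \quad \mathrm{vol} = 9 - 4u - 4v; \\
v \in [2-u, \tfrac{5}{2}-u]: & \quad N(u,v) = (v-1)C' + (v+u-2)(D_1+D_2+D_3+D_4), \\
& \quad \mathrm{vol} = (5 - 2u - 2v)^2;
\end{align*}
with pseudoeffective threshold $t(u) = \tfrac{5}{2} - u$. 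A useful sanity check is the linear equivalence $-2K_S \sim 3C + C' + D_1 + D_2 + D_3 + D_4$ on $S$, which is straightforward from the intersection numbers.

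Integrating the volume over $u \in [0,1]$, $v \in [0, t(u)]$ yields $S(W^S_{\bullet,\bullet}; C) = 1$. For $S(W^{S,C}_{\bullet,\bullet,\bullet}; P)$, the intersections $P(u,v) \cdot C$ equal $1+v$, $2$, and $10 - 4u - 4v$ in the three regimes, and integration of the square gives a first summand of $\tfrac{5}{6}$. The divisor $N(u,v)|_C$ is supported on at most the five points $Q, Q_1, \ldots, Q_4$ with $Q_i := D_i \cap C$; since $P \neq Q$ and the $D_i$ are pairwise disjoint, $P$ can coincide with at most one $Q_i$. In that worst case a direct integration produces $F_P = \tfrac{1}{36}$; otherwise $F_P = 0$. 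Combining, $S(W^{S,C}_{\bullet,\bullet,\bullet}; P) \leq \tfrac{5}{6} + \tfrac{1}{36} = \tfrac{31}{36}$. The main obstacle will be the third regime of the Zariski decomposition, which forces one to identify the four pairwise-disjoint sections meeting $C$ by unravelling the intersection combinatorics of the sixteen $(-1)$-curves on a smooth quartic del Pezzo surface, and to verify that the residual divisor is nef.
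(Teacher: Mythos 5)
Your proposal is correct and takes essentially the same route as the paper: same identification of $C,C'$ and the four additional lines meeting $C$ (your $D_i$ are the paper's $L_i$, viewed as sections of $\pi_3|_S$), same three-regime Zariski decomposition with $t(u)=\tfrac{5}{2}-u$, same volumes and intersection numbers, and the same bound $F_P\leqslant\tfrac{1}{36}$ coming from $P\notin C'$ together with $Z|_C$ being reduced, yielding $S(W^S_{\bullet,\bullet};C)=1$ and $S(W^{S,C}_{\bullet,\bullet,\bullet};P)\leqslant\tfrac{5}{6}+\tfrac{1}{36}=\tfrac{31}{36}$. (Your limits of integration for the $F_P$-term, $v\in[2-u,\tfrac{5}{2}-u]$, also fix a small typo in the paper's displayed integral, which writes $\int_2^{5/2-u}$.)
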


\begin{proof}
We have $\tau=1$.
For $u\in[0,1]$, we have $N(u)=0$ and
$P(u)|_S\sim_{\mathbb{R}}-K_S+(1-u)(C+C^\prime)$,
where $C^\prime$ is the~irreducible curve in $S$ such that $C+C^\prime$ is the~fiber of the~conic bundle $\pi_3$ that passes through the~point $P$.
Since $P\not\in E_1\cap E_2$, we see that $P\not\in C^\prime$.

By Lemma~\ref{lemma:quartic-weak-del-Pezzo-surfaces}, the~surface $S$ is a smooth del Pezzo surface of degree $4$,
so we can identify it with a complete intersection of two quadrics in $\mathbb{P}^4$.
Then $C$ and $C^\prime$ are lines in $S$, and  $S$ contains four additional lines that intersect $C$.
Denote them by $L_1$, $L_2$, $L_3$, $L_4$, and let $Z=L_1+L_2+L_3+L_4$.
Then the~intersections of the~curves $C$, $C^\prime$ and $Z$ on the~surface $S$ are given in the~table below.
\begin{center}
\renewcommand\arraystretch{1.2}
\begin{tabular}{|c||c|c|c|}
\hline
$\bullet$  & $C$ & $C^\prime$ & $Z$ \\
\hline\hline
$C$        & $-1$ & $1$ & $4$ \\
\hline
$C^\prime$  & $1$ & $-1$ & $0$\\
\hline
$Z$ & $4$ & $0$ & $-4$ \\
\hline
\end{tabular}
\end{center}

Observe that $-K_S\sim_{\mathbb{Q}}\frac{3}{2}C+\frac{1}{2}C^\prime+\frac{1}{2}Z$. This gives
$P(u)\vert_{S}-vC\sim_{\mathbb{R}}(\frac{5}{2}-u-v)C+(\frac{3}{2}-u)C^\prime+\frac{1}{2}Z$,
which implies that $P(u)\vert_{S}-vC$ is pseudoeffective $\iff$ $v\leqslant \frac{5}{2}-u$.

Moreover, we have
$$
P(u,v)=\left\{\aligned
&\Big(\frac{5}{2}-u-v\Big)C+\Big(\frac{3}{2}-u\Big)C^\prime+\frac{1}{2}Z\ \text{if $0\leqslant v\leqslant 1$}, \\
&\Big(\frac{5}{2}-u-v\Big)(C+C^\prime)+\frac{1}{2}Z\ \text{if $1\leqslant v\leqslant 2-u$}, \\
&\Big(\frac{5}{2}-u-v\Big)(C+C^\prime+Z)\ \text{if $2-u\leqslant v\leqslant \frac{5}{2}-u$}, \\
\endaligned
\right.
$$
$$
N(u,v)=\left\{\aligned
&0\ \text{if $0\leqslant v\leqslant 1$}, \\
&(v-1)C^\prime\ \text{if $1\leqslant v\leqslant 2-u$}, \\
&(v-1)C^\prime+(v+u-2)Z\ \text{if $2-u\leqslant v\leqslant \frac{5}{2}-u$}, \\
\endaligned
\right.
$$
$$
P(u,v)\cdot C=\left\{\aligned
&1+v\ \text{if $0\leqslant v\leqslant 1$}, \\
&2\ \text{if $1\leqslant v\leqslant 2-u$}, \\
&10-4u-4v\ \text{if $2-u\leqslant v\leqslant \frac{5}{2}-u$},
\endaligned
\right.
$$
$$
\mathrm{vol}\big(P(u)\vert_{S}-vC\big)=
\left\{\aligned
&8-v^2-4u-2v\ \text{if $0\leqslant v\leqslant 1$}, \\
&9-4u-4v\ \text{if $1\leqslant v\leqslant 2-u$}, \\
&(5-2u-2v)^2\ \text{if $2-u\leqslant v\leqslant \frac{5}{2}-u$}. \\
\endaligned
\right.
$$
Now, integrating $\mathrm{vol}(P(u)\vert_{S}-vC)$ and $(P(u,v)\cdot C)^2$, we get $S(W_{\bullet,\bullet}^S;C)=1$ and
\begin{multline*}
S\big(W_{\bullet, \bullet,\bullet}^{S,C};P\big)=\frac{5}{6}+F_P\big(W_{\bullet,\bullet,\bullet}^{S,C}\big)=\frac{5}{6}+\frac{1}{3}\int_0^1\int_0^{\frac{5}{2}-u}\big(P(u,v)\cdot C\big)\cdot \mathrm{ord}_P\big(N(u,v)\big|_C\big)dvdu\leqslant\\
\leqslant\frac{5}{6}+\frac{1}{3}\int_0^1\int_2^{\frac{5}{2}-u}(10-4u-4v)(v+u-2)dvdu=\frac{31}{36},\quad\quad\quad
\end{multline*}
because $P\not\in C^\prime$, and the~curves $Z$ and $C$ intersect each other transversally.
\end{proof}

\section{The proof of Main Theorem}
\label{section:the-proof}

Let us use notations and assumptions of Sections~\ref{section:3-3} and \ref{section:Abban-Zhuang}.
Recall that $\mathbf{F}$ is a prime divisor  over the~threefold $X$, and $\mathfrak{C}$ is its center in $X$.
To prove Main Theorem, we must show that $\beta(\mathbf{F})>0$.

\begin{lemma}
\label{lemma:the-proof-curve}
Suppose that $\mathfrak{C}$ is a curve. Then $\beta(\mathbf{F})>0$.
\end{lemma}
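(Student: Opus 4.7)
The plan is to argue by contradiction via Corollary~\ref{corollary:curve}: assuming $\beta(\mathbf{F}) \leqslant 0$, every irreducible normal surface $S \supset \mathfrak{C}$ must satisfy $S(W^S_{\bullet,\bullet};\mathfrak{C}) > 1$, so it suffices to exhibit one $S$ violating this. The case split is by which of the extremal contractions $\pi_3$, $\omega$, $\pi_1$, $\pi_2$ sends $\mathfrak{C}$ to a point.

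Suppose first that $\pi_3$ contracts $\mathfrak{C}$, so $\mathfrak{C}$ is an irreducible component of a conic fiber. I would first try $S \in |H_3|$ general through that fiber; when $\mathfrak{C} \not\subset E_1 \cup E_2$, Lemmas~\ref{lemma:S-C-P-point-not-in-E1-E2-1} and~\ref{lemma:S-C-P-point-not-in-E1-E2-2} give $S(W^S_{\bullet,\bullet};\mathfrak{C}) \leqslant \tfrac{8}{9}$. In the complementary subcase $\mathfrak{C}$ is a $\phi_i$-fiber in $E_i$, and Lemma~\ref{lemma:S-C-P-point-not-in-E1-E2-3} yields only the borderline value $1$; there I would switch to the unique $S \in |H_i|$ containing $\mathfrak{C}$ (a quintic del Pezzo with at most Du Val singularities by Lemma~\ref{lemma:singular-quintic-del-Pezzo-surfaces}) and redo the computation with the Zariski decomposition of $-K_X - uS$ from the proof of Lemma~\ref{lemma:delta-P-del-Pezzo-smooth-A1}, exploiting that $\mathfrak{C}$ has negative self-intersection on $S$ and lies away from $E_{3-i}|_S$ to force a strict inequality. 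If instead $\omega$ contracts $\mathfrak{C}$ but $\pi_3$ does not, I would take $S \in |H_1+H_2|$ general through $\mathfrak{C}$: Lemmas~\ref{lemma:del-Pezzo-surface-degree-two} and~\ref{lemma:S-C-P-point-not-in-Delta-P1-P1} give the bound $\tfrac{31}{36}$ away from $\Delta_{\mathbb{P}^1\times\mathbb{P}^1}$, and a parallel computation on the resulting singular degree-$2$ del Pezzo handles the discriminant case. Curves contracted by some $\pi_i$ but not by $\omega$ or $\pi_3$ lie in a unique $|H_i|$-fiber quintic del Pezzo, and the same Zariski machinery of Lemma~\ref{lemma:delta-P-del-Pezzo-smooth-A1} yields the bound.

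The main obstacles are twofold. First, the borderline $\phi_i$-fiber subcase, where the surface $S \in |H_3|$ gives only equality; the switch to $S \in |H_i|$ and the subsequent Zariski-decomposition computation over $u \in [1, \tfrac{3}{2}]$ on a possibly singular quintic del Pezzo must be carried out explicitly, keeping careful track of $\mathrm{ord}_{\mathfrak{C}}(N(u)|_S)$ and the geometry of $\mathfrak{C}$ relative to the negative curves on $S$. Second, the ``genuine multisection'' case where $\mathfrak{C}$ is contracted by none of $\pi_1, \pi_2, \pi_3, \omega$ (this happens, e.g., for bisections of the conic bundle $\pi_3$ that are not contained in any $|H_i|$-fiber); here one must choose $S$ from a higher-degree linear system such as $|H_1 + H_2|$ or $|-K_X|$, and bound $S(W^S_{\bullet,\bullet};\mathfrak{C})$ uniformly using the intersection numbers $\mathfrak{C} \cdot H_j$ together with the numerical structure of $\mathrm{Eff}(X) = \langle E_1, E_2, H_1, H_2 \rangle$.
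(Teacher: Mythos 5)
Your approach is genuinely different from the paper's, and it has two real gaps that you yourself flag but do not close. The paper's proof begins with a much cheaper step that avoids your entire case analysis: assuming $\beta(\mathbf{F})\leqslant 0$, every point $P\in\mathfrak{C}$ satisfies $\delta_P(X)\leqslant 1$, so if a general $S_1\in|H_1|$ met $\mathfrak{C}$ in a point $P$ we would contradict Corollary~\ref{corollary:delta-P-quintic-del-Pezzo-smooth} (which gives $\delta_P(X)>1$ whenever the $|H_1|$-member through $P$ is smooth). Hence $H_1\cdot\mathfrak{C}=0$, and symmetrically $H_2\cdot\mathfrak{C}=0$, so $\omega$ contracts $\mathfrak{C}$. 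This single observation eliminates at once your ``multisection'' case, the $\pi_i$-only cases, and the $\pi_3$-fiber-component case (a $\phi_1$-fiber has $H_2\cdot\mathfrak{C}=1$, so it is killed by the same argument applied to $S_2\in|H_2|$). Only two subcases remain: if the $\omega$-fiber is smooth, $\mathfrak{C}$ equals it, and Lemma~\ref{lemma:S-C-P-point-not-in-Delta-P1-P1} plus Corollary~\ref{corollary:curve} gives a contradiction; if the $\omega$-fiber is singular, then $\pi_3(\mathfrak{C})$ is a line, which cannot lie inside the smooth quartic $\Delta_{\mathbb{P}^2}$, so at a general $P\in\mathfrak{C}$ the $\pi_3$-fiber $Z$ is a smooth conic, a general $S\in|H_3|$ through $Z$ is a smooth quartic del Pezzo by Lemma~\ref{lemma:quartic-weak-del-Pezzo-surfaces}, and Lemma~\ref{lemma:delta-P-del-Pezzo-degree-4-smooth} gives $\delta_P(X)>1$, again a contradiction.

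Concretely, the two gaps in your outline are the ones you name. First, in the $\phi_i$-fiber subcase Lemma~\ref{lemma:S-C-P-point-not-in-E1-E2-3} yields only $S(W^S_{\bullet,\bullet};C)=1$, which does \emph{not} contradict Corollary~\ref{corollary:curve} (that requires a strict inequality $>1$); your proposed remedy of switching to $S\in|H_i|$ and redoing the Zariski-decomposition computation on a singular quintic del Pezzo is a nontrivial computation you do not carry out, and it is unnecessary once one notices that such a $\phi_i$-fiber already meets a general $|H_{3-i}|$-member. Second, the case where $\mathfrak{C}$ is contracted by none of $\pi_1,\pi_2,\pi_3,\omega$ is left as a genuine unbounded estimate in your sketch; the paper's opening move shows this case simply cannot occur. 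You also propose handling the singular-$\omega$-fiber case by ``a parallel computation on the singular degree-$2$ del Pezzo'' arising from $|H_1+H_2|$; the paper avoids this by switching to the $\pi_3$-direction instead, which is cleaner because the discriminant $\Delta_{\mathbb{P}^2}$ is an irreducible smooth quartic and a line cannot sit inside it.
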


\begin{proof}
Suppose $\beta(\mathbf{F})\leqslant 0$.
Then $\delta_P(X)\leqslant 1$ for every point $P\in \mathfrak{C}$.
Let us seek for a contradiction.

Let $S_1$ be a general surface in the~linear system $|H_1|$. Then $S_1$ is smooth.
Hence, if $S_1\cap \mathfrak{C}\ne\varnothing$, then  $\delta_P(X)\leqslant 1$ for every point $P\in S_1\cap \mathfrak{C}$,
which contradicts Corollary~\ref{corollary:delta-P-quintic-del-Pezzo-smooth}.
We see that $S_1\cdot \mathfrak{C}=0$.
Similarly, we see that $S_2\cdot \mathfrak{C}=0$ for a general surface $S_2\in|H_2|$.
So, we see that $\omega(\mathfrak{C})$ is a point.

Let $C$ be the~scheme fiber of the~conic bundle $\omega$ over the~point $\omega(\mathfrak{C})$.
Then $\mathfrak{C}$ is an irreducible component of the~curve $C$.
If the~fiber $C$ is smooth, then we $\mathfrak{C}=C$.

Suppose that $C$ is smooth.
If $S$ is a general surface in the~linear system $|H_1+H_2|$ that contains~$\mathfrak{C}$, then
$S(W_{\bullet,\bullet}^S;\mathfrak{C})=\frac{31}{36}<1$ by Lemma~\ref{lemma:S-C-P-point-not-in-Delta-P1-P1},
which contradicts Corollary~\ref{corollary:curve}.
So, the curve  $C$ is singular.

Note that $\pi_3(\mathfrak{C})$ is a line in $\mathbb{P}^2_{x,y,z}$.
On the~other hand, the~discriminant curve $\Delta_{\mathbb{P}^2}$ is an~irreducible smooth quartic curve in  $\mathbb{P}^2_{x,y,z}$.
Therefore, in particular, the~line  $\pi_3(\mathfrak{C})$ is not contained in $\Delta_{\mathbb{P}^2}$.
Now, let $P$ be a general point in $\mathfrak{C}$, let $Z$ be the~fiber of the~conic bundle $\pi_3$
that passes through $P$, and let $S$ be a general surface in $|H_3|$ that contains the~curve $Z$.
Then $Z$ and $S$ are both~smooth,
and it follows from Lemma~\ref{lemma:quartic-weak-del-Pezzo-surfaces} that $S$ is a del Pezzo of degree $4$,
so that $\delta_P(X)>1$ by Lemma~\ref{lemma:delta-P-del-Pezzo-degree-4-smooth}.
\end{proof}

Hence, to complete the~proof of Main Theorem, we may assume that $\mathfrak{C}$ is a point. Set $P=\mathfrak{C}$.
Let~$\mathscr{C}$ be the~fiber of the~conic bundle $\omega$ that contains~$P$.

\begin{lemma}
\label{lemma:the-proof-E1-cap-E2}
Suppose that $P\not\in E_1\cap E_2$. Then $\beta(\mathbf{F})>0$.
\end{lemma}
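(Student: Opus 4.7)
The plan is to argue by contradiction. Assume $\beta(\mathbf{F})\le 0$; then $A_X(\mathbf{F})/S_X(\mathbf{F})\le 1$ and hence $\delta_P(X)\le 1$. My goal is to derive a contradiction by applying Corollary~\ref{corollary:AZ-twice} to a well-chosen flag $(S,C)$ through $P$, leveraging its ``moreover'' clause to upgrade a borderline lower bound $\delta_P(X)\ge 1$ into the strict information $\delta_P(X)=1/S_X(S)>1$.

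In every subcase I take $S$ to be a general surface in $|H_3|$ containing $P$. Such an $S$ automatically contains the whole $\pi_3$-fiber through $P$ and is smooth by Lemma~\ref{lemma:quartic-weak-del-Pezzo-surfaces}. A quick calculation from $H_1^2=H_2^2=H_3^3=0$, $H_1\cdot H_2\cdot H_3=2$ and $H_1\cdot H_3^2=H_2\cdot H_3^2=1$ yields $\tau=1$ and
\[
S_X(S)=\frac{1}{18}\int_0^1\bigl(6(1-u)^2+12(1-u)\bigr)\,du=\frac{4}{9},
\]
so the term $1/S_X(S)=9/4$ in the minimum of \eqref{equation:AZ-delta-inequality} is harmless in every case and I only have to control $1/S(W^S_{\bullet,\bullet};C)$ and $1/S(W^{S,C}_{\bullet,\bullet,\bullet};P)$.

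I then split according to the position of $P$. If $P\notin E_1\cup E_2$, the fiber $C$ of $\pi_3$ through $P$ is smooth (as $\pi_3(P)\notin\Delta_{\mathbb{P}^2}$), so only cases $(\mathrm{1})$ and $(\mathrm{2})$ of Lemma~\ref{lemma:quartic-weak-del-Pezzo-surfaces} can occur. Taking this $C$, Lemmas~\ref{lemma:S-C-P-point-not-in-E1-E2-1} and~\ref{lemma:S-C-P-point-not-in-E1-E2-2} give $(S(W^S_{\bullet,\bullet};C),\,S(W^{S,C}_{\bullet,\bullet,\bullet};P))$ equal to $(7/9,\,1)$ or $(8/9,\,7/9)$ according to whether $S$ is a smooth del Pezzo of degree $4$ or merely a smooth weak del Pezzo. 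If instead $P\in(E_1\cup E_2)\setminus(E_1\cap E_2)$, the fiber through $P$ is singular; letting $C$ be the irreducible component containing $P$, Lemma~\ref{lemma:S-C-P-point-not-in-E1-E2-3} gives $S(W^S_{\bullet,\bullet};C)=1$ and $S(W^{S,C}_{\bullet,\bullet,\bullet};P)\le 31/36$. In every subcase the minimum appearing in \eqref{equation:AZ-delta-inequality} is at least~$1$.

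Feeding this into Corollary~\ref{corollary:AZ-twice} gives $\delta_P(X)\ge 1$. Combined with $A_X(\mathbf{F})/S_X(\mathbf{F})\le 1$, both inequalities in \eqref{equation:AZ-delta-inequality} must be equalities. Since $\mathfrak{C}=P$, the ``moreover'' clause of Corollary~\ref{corollary:AZ-twice} then forces $\delta_P(X)=1/S_X(S)=9/4$, contradicting $\delta_P(X)\le 1$. The main obstacle lies in the borderline subcases --- smooth $C$ with $S$ a smooth del Pezzo, and $P\in(E_1\cup E_2)\setminus(E_1\cap E_2)$ --- where the minimum equals $1$ exactly; the argument survives these cases only because the equality clause of Corollary~\ref{corollary:AZ-twice} is available, which in turn requires the present hypothesis $\mathfrak{C}=P$.
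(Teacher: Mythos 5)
Your proof is correct and follows the paper's own route exactly: the paper's one-line proof cites precisely Lemmas~\ref{lemma:S-C-P-point-not-in-E1-E2-1}, \ref{lemma:S-C-P-point-not-in-E1-E2-2}, \ref{lemma:S-C-P-point-not-in-E1-E2-3} together with Corollary~\ref{corollary:AZ-twice}, and you have correctly identified that the ``moreover'' equality clause of Corollary~\ref{corollary:AZ-twice} (with $\mathfrak{C}=P$) is what carries the two borderline subcases where the Abban--Zhuang minimum equals exactly~$1$.
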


\begin{proof}
Apply Lemmas~\ref{lemma:S-C-P-point-not-in-E1-E2-1}, \ref{lemma:S-C-P-point-not-in-E1-E2-2},
\ref{lemma:S-C-P-point-not-in-E1-E2-3} and Corollary~\ref{corollary:AZ-twice}.
\end{proof}

Thus, to complete the~proof of Main Theorem, we may assume, in addition, that $P\in E_1\cap E_2$.
Then~the~conic $\mathscr{C}$ is smooth at $P$ by Lemma~\ref{lemma:conic-conic-singular}.
In particular, we see that $\mathscr{C}$ is reduced.

\begin{lemma}
\label{lemma:the-proof-conic-smooth}
Suppose that $\mathscr{C}$ is smooth. Then $\beta(\mathbf{F})>0$.
\end{lemma}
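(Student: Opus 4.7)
The plan is to apply Corollary~\ref{corollary:AZ-twice} to the flag $(S, \mathscr{C})$ in $X$, where $S$ is a general member of $|H_1+H_2|$ through $P$. The first observation is that every $S \in |H_1+H_2|$ passing through $P$ automatically contains the whole $\omega$-fiber $\mathscr{C}$, because $|H_1+H_2|$ is pulled back from $|\mathcal{O}_{\mathbb{P}^1 \times \mathbb{P}^1}(1,1)|$ under the conic bundle $\omega$. Since $\mathscr{C}$ is smooth by hypothesis, Lemma~\ref{lemma:del-Pezzo-surface-degree-two} then guarantees that for general such $S$, the surface is a smooth del Pezzo of degree $2$, in particular has at worst Du Val singularities, so Corollary~\ref{corollary:AZ-twice} is available with $C = \mathscr{C}$.

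Smoothness of $\mathscr{C}$ also means that $\omega(P)\notin \Delta_{\mathbb{P}^1\times\mathbb{P}^1}$, so Lemma~\ref{lemma:S-C-P-point-not-in-Delta-P1-P1} applies verbatim and yields $S(W_{\bullet,\bullet}^S;\mathscr{C})=\tfrac{31}{36}$ and $S(W_{\bullet,\bullet,\bullet}^{S,\mathscr{C}};P)=1$. Combined with $S_X(S)<1$ from \cite[Theorem~10.1]{Fujita2016}, Corollary~\ref{corollary:AZ-twice} then gives
$$
\frac{A_X(\mathbf{F})}{S_X(\mathbf{F})} \;\geqslant\; \delta_P(X) \;\geqslant\; \min\left\{1,\ \frac{36}{31},\ \frac{1}{S_X(S)}\right\} \;=\; 1.
$$

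The main obstacle is that this is only a non-strict bound, so one cannot immediately conclude $\beta(\mathbf{F})>0$. This is precisely the situation that the moreover clause of Corollary~\ref{corollary:AZ-twice} is designed to rule out, and I finish by contradiction. Suppose $\beta(\mathbf{F})\leqslant 0$; then $A_X(\mathbf{F})/S_X(\mathbf{F})\leqslant 1$, and combined with the display above both inequalities must be equalities and $\delta_P(X)=1$. Since we are already in the case $\mathfrak{C}=P$, the moreover clause then forces $\delta_P(X)=1/S_X(S)>1$, contradicting $\delta_P(X)=1$. This finishes the proof.
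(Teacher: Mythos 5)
Your proof is correct and takes exactly the approach the paper intends: apply Lemma~\ref{lemma:S-C-P-point-not-in-Delta-P1-P1} together with Corollary~\ref{corollary:AZ-twice}, the only nuance being the moreover clause of Corollary~\ref{corollary:AZ-twice} to upgrade the non-strict bound $\delta_P(X)\geqslant 1$ to the strict conclusion $\beta(\mathbf{F})>0$. You correctly verify that smoothness of $\mathscr{C}$ gives $\omega(P)\notin\Delta_{\mathbb{P}^1\times\mathbb{P}^1}$ and that the surface $S\in|H_1+H_2|$ automatically contains $\mathscr{C}$, so all hypotheses are in place.
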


\begin{proof}
Apply Lemma~\ref{lemma:S-C-P-point-not-in-Delta-P1-P1} and Corollary~\ref{corollary:AZ-twice}.
\end{proof}

To complete the~proof of Main Theorem, we may assume that $\mathscr{C}$ is singular. Write $\mathscr{C}=\ell_1+\ell_2$,
where $\ell_1$ and $\ell_2$ are irreducible components of the~conic $\mathscr{C}$.
Then $P\ne\ell_1\cap\ell_2$, since $P\not\in\mathrm{Sing}(\mathscr{C})$.

Let $S_1$ and $S_2$ be general surfaces in $|H_1|$ and $|H_2|$ that passes through the~point $P$, respectively.
Then $\mathscr{C}=S_1\cap S_2$, and it follows from Corollary~\ref{corollary:S1-S2-singular} that $S_1$ or $S_2$ is smooth along the~conic $\mathscr{C}$.
Without loss of generality, we may assume that $S_1$ is smooth along $\mathscr{C}$.
We let $S=S_1$.

If $S$ is smooth, then $\delta_P(X)>1$ by Corollary~\ref{corollary:delta-P-quintic-del-Pezzo-smooth}.
Thus, we may assume that $S$ is singular.

Recall that $S$ is a quintic del Pezzo surface,
and $\ell_1$ and $\ell_2$ are lines in its anticanonical embedding.
The preimages of the~lines $\ell_1$ and $\ell_2$ on the~minimal resolution of the~surface $S$ are $(-1)$-curves,
which do not intersect $(-2)$-curves.
By Lemma~\ref{lemma:singular-quintic-del-Pezzo-surfaces} and Remark~\ref{remark:singular-quintic-del-Pezzo-surfaces},
one of the~following cases holds:
\begin{enumerate}
\item[($\mathbb{A}_1$)]  the~surface $S$ has one singular point of type $\mathbb{A}_1$,
\item[($2\mathbb{A}_1$)] the~surface $S$ has two singular points of type $\mathbb{A}_1$.
\end{enumerate}
In both cases, the~restriction morphism $\pi_3\vert_{S}\colon S\to\mathbb{P}^2_{x,y,z}$ is birational.
In ($\mathbb{A}_1$)-case, this morphism contracts three disjoint irreducible smooth rational curves $\mathbf{e}_1$, $\mathbf{e}_2$, $\mathbf{e}_3$ such that
\mbox{$E_1\vert_{S}=2\mathbf{e}_1+\mathbf{e}_2+\mathbf{e}_3$},
the curves $\mathbf{e}_1$, $\mathbf{e}_2$, $\mathbf{e}_3$ are sections of the~conic bundle $\pi_2\vert_{S}\colon S\to\mathbb{P}^1_{u,v}$,
the curve $\mathbf{e}_1$ passes through the~singular point of the~surface $S$,
but $\mathbf{e}_2$ and $\mathbf{e}_3$ are  contained in the~smooth locus of the~surface~$S$.
In~($2\mathbb{A}_1$)-case, the~morphism $\pi_3\vert_{S}$ contracts two disjoint curves $\mathbf{e}_1$ and
$\mathbf{e}_2$  such that $E_1\big\vert_{S}=2\mathbf{e}_1+2\mathbf{e}_2$,
the curves $\mathbf{e}_1$ and $\mathbf{e}_2$ are sections of the~conic bundle $\pi_2\vert_{S}$,
and each curve among $\mathbf{e}_1$ and $\mathbf{e}_2$ contains one singular point of the~surface $S$.
In both cases,  we may assume that $\ell_1\cap\mathbf{e}_1\ne\varnothing$.

Let us identify the~surface $S$ with its image in $\mathbb{P}^5$ via the~anticanonical embedding $S\hookrightarrow\mathbb{P}^5$.
Then~$\ell_1$ and $\ell_2$ and the~curves contracted by $\pi_3\vert_{S}$ are lines.
In ($\mathbb{A}_1$)-case, the~surface $S$ contains two additional lines $\ell_3$ and $\ell_4$ such that
$\ell_3+\ell_4\sim \ell_1+\ell_2$, the~intersection $\ell_3\cap\ell_4$ is the~singular point of the~surface $S$,
and the~intersection graph of the~lines $\ell_1$, $\ell_2$, $\ell_3$, $\ell_4$, $\mathbf{e}_1$, $\mathbf{e}_2$, $\mathbf{e}_3$ is shown here:
\begin{center}
\setlength{\unitlength}{0.70mm}
\begin{picture}(170,85)(0,0)
\thicklines
\put(10,10){\line(1,0){150}}
\put(20,0){\line(0,1){60}}
\put(80,0){\line(0,1){60}}
\put(140,0){\line(0,1){80}}
\put(150,70){\line(-1,0){110}}
\put(55,75){\line(-1,-1){45}}
\put(45,75){\line(1,-1){45}}
\put(50,70){\circle*{3}}
\put(33,49){\mbox{$\mathbf{e}_1$}}
\put(65,56){\mbox{$\ell_3$}}
\put(21,25){\mbox{$\ell_1$}}
\put(81,20){\mbox{$\mathbf{e}_2$}}
\put(141,40){\mbox{$\mathbf{e}_3$}}
\put(110,12){\mbox{$\ell_2$}}
\put(95,72){\mbox{$\ell_4$}}
\end{picture}
\end{center}
In this picture, we denoted by $\bullet$ the~singular point of the~surface $S$.
Moreover, on the~surface $S$, the~intersections of the~lines $\ell_1$, $\ell_2$, $\ell_3$, $\ell_4$, $\mathbf{e}_1$, $\mathbf{e}_2$, $\mathbf{e}_3$  are given in the~table below.
\begin{center}
\renewcommand\arraystretch{1.2}
\begin{tabular}{|c||c|c|c|c|c|c|c|}
\hline
$\bullet$  & $\ell_1$ & $\ell_2$ & $\ell_3$ & $\ell_4$ & $\mathbf{e}_1$ & $\mathbf{e}_2$ & $\mathbf{e}_3$\\
\hline\hline
$\ell_1$       & $-1$& $1$ & $0$& $0$& $1$& $0$ & $0$\\
\hline
$\ell_2$       & $1$& $-1$& $0$& $0$& $0$& $1$& $1$\\
\hline
$\ell_3$       & $0$& $0$& $-\frac{1}{2}$& $\frac{1}{2}$& $\frac{1}{2}$& $1$& $0$\\
\hline
$\ell_4$       & $0$& $0$& $\frac{1}{2}$& $-\frac{1}{2}$& $\frac{1}{2}$& $0$& $1$\\
\hline
$\mathbf{e}_1$ & $1$& $0$& $\frac{1}{2}$& $\frac{1}{2}$& $-\frac{1}{2}$& $0$& $0$\\
\hline
$\mathbf{e}_2$ & $0$ & $1$& $1$& $0$& $0$& $-1$& $0$\\
\hline
$\mathbf{e}_3$ & $0$& $1$& $0$& $1$& $0$& $0$& $-1$\\
\hline
\end{tabular}
\end{center}

Likewise, in ($2\mathbb{A}_1$)-case, the~surface $S$ contains one additional lines $\ell_3$ such that
$2\ell_3\sim \ell_1+\ell_2$, the~line $\ell_3$ passes through
both singular points of the~del Pezzo surface $S$, and the~intersection graph of the~lines on the~surface $S$ is shown in the~following picture:
\begin{center}
\setlength{\unitlength}{0.70mm}
\begin{picture}(170,60)(0,0)
\thicklines
\put(15,50){\line(1,0){140}}
\put(80,52){\mbox{$\ell_3$}}
\put(30,50){\circle*{3}}
\put(140,50){\circle*{3}}
\put(30,60){\line(0,-1){45}}
\put(31,37){\mbox{$\mathbf{e}_1$}}
\put(140,60){\line(0,-1){45}}
\put(141,37){\mbox{$\mathbf{e}_2$}}
\put(20,30){\line(3,-1){75}}
\put(50,21){\mbox{$\ell_1$}}
\put(150,30){\line(-3,-1){75}}
\put(114,21){\mbox{$\ell_2$}}
\end{picture}
\end{center}
As above, singular points of the~surface $S$ are denote by $\bullet$.
The intersections of the~lines $\ell_1$, $\ell_2$, $\ell_3$, $\mathbf{e}_1$, $\mathbf{e}_2$ on the~surface $S$ are given in the~table below.
\begin{center}
\renewcommand\arraystretch{1.2}
\begin{tabular}{|c||c|c|c|c|c|}
\hline
$\bullet$       & $\ell_1$ & $\ell_2$ & $\ell_3$ & $\mathbf{e}_1$ & $\mathbf{e}_2$ \\
\hline\hline
$\ell_1$        & $-1$& $1$& $0$& $1$& $0$\\
\hline
$\ell_2$        & $1$& $-1$& $0$& $0$& $1$ \\
\hline
$\ell_3$        & $0$& $0$& $0$& $\frac{1}{2}$& $\frac{1}{2}$\\
\hline
$\mathbf{e}_1$  & $1$& $0$& $\frac{1}{2}$& $-\frac{1}{2}$& $0$\\
\hline
$\mathbf{e}_2$  & $0$& $1$& $\frac{1}{2}$& $0$& $-\frac{1}{2}$\\
\hline
\end{tabular}
\end{center}

\begin{remark}
\label{remarl:quintic-del-Pezzo-lines}
By \cite[Lemma~2.9]{CheltsovProkhorov},
the lines in $S$ generate the~group $\mathrm{Cl}(S)$ and the~cone of effective divisors $\mathrm{Eff}(S)$,
and every extremal ray of the~Mori cone $\overline{\mathrm{NE}}(S)$ is generated by the~class of a line.
\end{remark}

In ($\mathbb{A}_1$)-case, the~point $P$ is one of the~points $\mathbf{e}_1\cap\ell_1$, $\mathbf{e}_2\cap\ell_2$ or $\mathbf{e}_3\cap\ell_2$, because $P\in E_1\cap E_2$.
On~the other hand, if $P=\mathbf{e}_2\cap\ell_2$ or $P=\mathbf{e}_3\cap\ell_2$, it follows from Corollary~\ref{corollary:delta-P-quintic-del-Pezzo-A1} that $\delta_P(X)>1$.
In~($2\mathbb{A}_1$)-case, either $P=\mathbf{e}_1\cap\ell_1$ or $P=\mathbf{e}_2\cap\ell_2$.
Therefore, to complete the~proof of Main Theorem, we may assume that $P=\mathbf{e}_1\cap\ell_1$ in both cases.

Now, we will apply Corollary~\ref{corollary:AZ-twice} to the~surface $S$ with $C=\mathbf{e}_1$  at the~point $P$.
We have $\tau=\frac{3}{2}$.
As in the~proof of Corollary~\ref{lemma:delta-P-del-Pezzo-smooth-A1}, we see that
$$
P(u)=\left\{\aligned
&(1-u)H_1+H_2+H_3\ \text{if $0\leqslant u\leqslant 1$}, \\
&(2-u)H_2+(3-2u)H_3\ \text{if $1\leqslant u\leqslant \frac{3}{2}$}, \\
\endaligned
\right.
$$
and
$$
N(u)=\left\{\aligned
&0\ \text{if $0\leqslant u\leqslant 1$}, \\
&(u-1)E_2\ \text{if $1\leqslant u\leqslant \frac{3}{2}$}.\\
\endaligned
\right.
$$
Since $H_1\vert_{S}\sim 0$, $H_2\vert_{S}\sim\ell_1+\ell_2$, $H_3\vert_{S}\sim \ell_1+2\mathbf{e}_1$, we have
$$
P(u)\big\vert_{S}-v\mathbf{e}_1\sim_{\mathbb{R}}\left\{\aligned
&(2-v)\mathbf{e}_1+2\ell_1+\ell_2\ \text{if $0\leqslant u\leqslant 1$}, \\
&(6-4u-v)\mathbf{e}_1+(5-3u)\ell_1+(2-u)\ell_2\ \text{if $1\leqslant u\leqslant \frac{3}{2}$}. \\
\endaligned
\right.
$$
Thus, since the~intersection form of the~curves $\ell_1$ and $\ell_2$ is semi-negative definite, we get
$$
t(u)=\left\{\aligned
&2\ \text{if $0\leqslant u\leqslant 1$}, \\
&6-4u\ \text{if $1\leqslant u\leqslant \frac{3}{2}$}.\\
\endaligned
\right.
$$
Similarly, if $0\leqslant u\leqslant 1$, then
$$
P(u,v)=\left\{\aligned
&(2-v)\mathbf{e}_1+2\ell_1+\ell_2\ \text{if $0\leqslant v\leqslant 1$}, \\
&(2-v)\mathbf{e}_1+(3-v)\ell_1+\ell_2\ \text{if $1\leqslant v\leqslant 2$}, \\
\endaligned
\right.
$$
$$
N(u,v)=\left\{\aligned
&0\ \text{if $0\leqslant v\leqslant 1$}, \\
&(v-1)\ell_1\ \text{if $1\leqslant v\leqslant 2$},\\
\endaligned
\right.
$$
$$
P(u,v)\cdot\mathbf{e}_1=\left\{\aligned
&\frac{v+2}{2}\ \text{if $0\leqslant v\leqslant 1$}, \\
&\frac{4-v}{2}\ \text{if $1\leqslant v\leqslant 2$}, \\
\endaligned
\right.
$$
$$
\mathrm{vol}\big(P(u)\big\vert_{S}-v\mathbf{e}_1\big)=\left\{\aligned
&\frac{10-4v-v^2}{2}\ \text{if $0\leqslant v\leqslant 1$}, \\
&\frac{(2-v)(6-v)}{2}\ \text{if $1\leqslant v\leqslant 2$}.\\
\endaligned
\right.
$$
Likewise, if $1\leqslant u\leqslant\frac{3}{2}$, then
$$
P(u,v)=\left\{\aligned
&(6-4u-v)\mathbf{e}_1+(5-3u)\ell_1+(2-u)\ell_2\ \text{if $0\leqslant v\leqslant 3-2u$}, \\
&(6-4u-v)\mathbf{e}_1+(8-5u-v)\ell_1+(2-u)\ell_2\ \text{if $3-2u\leqslant v\leqslant 6-4u$}, \\
\endaligned
\right.
$$
$$
N(u,v)=\left\{\aligned
&0\ \text{if $0\leqslant v\leqslant 3-2u$}, \\
&(v+2u-3)\ell_1\ \text{if $3-2u\leqslant v\leqslant 6-4u$},\\
\endaligned
\right.
$$
$$
P(u,v)\cdot\mathbf{e}_1=\left\{\aligned
&\frac{4+v-2u}{2}\ \text{if $0\leqslant v\leqslant 3-2u$}, \\
&\frac{10-6u-v}{2}\ \text{if $3-2u\leqslant v\leqslant 6-4u$}, \\
\endaligned
\right.
$$
$$
\mathrm{vol}\big(P(u)\big\vert_{S}-v\mathbf{e}_1\big)=\left\{\aligned
&\frac{66+24u^2+4uv-v^2-80u-8v}{2}\ \text{if $0\leqslant v\leqslant 3-2u$}, \\
&\frac{(6-4u-v)(14-8u-v)}{2}\ \text{if $3-2u\leqslant v\leqslant 6-4u$}.\\
\endaligned
\right.
$$
Integrating, we get $S(W_{\bullet,\bullet}^S;\mathbf{e}_1)=\frac{137}{144}$ and
$S(W_{\bullet, \bullet,\bullet}^{S,\mathbf{e}_1};P)=\frac{59}{96}+F_P(W_{\bullet,\bullet,\bullet}^{S,\mathbf{e}_1})$.
To compute $F_P(W_{\bullet,\bullet,\bullet}^{S,\mathbf{e}_1})$, we let $Z=E_2\vert_{S}$.
Then $Z$ is a smooth curve of genus $3$ such that $\pi(Z)$ is a smooth quartic in $\mathbb{P}^2_{x,y,z}$.
Moreover, the~curve $Z$ is contained in the~smooth locus of the~surface $S$, and
$$
Z\sim\left\{\aligned
&4\mathbf{e}_1+\ell_3+\ell_4+2\ell_1\ \text{in ($\mathbb{A}_1$)-case}, \\
&2\ell_1+2\ell_2+2\mathbf{e}_1+2\mathbf{e}_2\ \text{in ($2\mathbb{A}_1$)-case}. \\
\endaligned
\right.
$$
In particular, we have $Z\cdot\mathbf{e}_1=1$. Since $\mathbf{e}_1\not\subset Z$, we have
$$
N_S^\prime(u)=\left\{\aligned
&0\ \text{if $0\leqslant u\leqslant 1$}, \\
&(u-1)Z\ \text{if $1\leqslant u\leqslant \frac{3}{2}$}.\\
\endaligned
\right.
$$
Note that  $P\in Z$, because $P\in E_1\cap E_2$.
Thus, since $\mathbf{e}_1\cdot Z=1$ and $\mathbf{e}_1\cdot\ell_1=1$, we have
\begin{multline*}
F_P\big(W_{\bullet,\bullet,\bullet}^{S,\mathbf{e}_1}\big)=\frac{1}{3}\int_1^{\frac{3}{2}}\int_0^{6-4u}\big(P(u,v)\cdot \mathbf{e}_1\big)(u-1)dvdu+\frac{1}{3}\int_0^{\frac{3}{2}}\int_0^{t(u)}\big(P(u,v)\cdot \mathbf{e}_1\big)\big(N(u,v)\cdot \mathbf{e}_1\big)dvdu=\\
=\frac{1}{3}\int_1^{\frac{3}{2}}\int_0^{3-2u}\frac{(4+v-2u)(u-1)}{2}dvdu+\frac{1}{3}\int_1^{\frac{3}{2}}\int_{3-2u}^{6-4u}\frac{(10-6u-v)(u-1)}{2}dvdu+\\
+\frac{1}{3}\int_0^{1}\int_1^{2}\frac{(4-v)(v-1)}{2}dvdu+\frac{1}{3}\int_1^{\frac{3}{2}}\int_{3-2u}^{6-4u}\frac{(10-6u-v)(v+2u-3)}{2}dvdu=\frac{71}{288},
\end{multline*}
so that $S(W_{\bullet, \bullet,\bullet}^{S,\mathbf{e}_1};P)=\frac{31}{36}$.
Now, applying Corollary~\ref{corollary:AZ-twice}, we get $\delta_P(X)>1$, because $S_X(S)<1$.
Therefore, we see that $\beta(\mathbf{F})>0$. By \cite{Fujita2019Crelle,Li}, this completes the~proof of Main Theorem.

\begin{remark}
\label{remark:final-step-Kento}
Instead of using Corollary~\ref{corollary:AZ-twice}, we can finish the~proof of Main Theorem as follows.
Let $F$ be a divisor over $S$ such that $P\in C_S(F)$, and let $\mathcal{C}$ be a fiber of the~conic bundle $\pi_2\vert_{S}$.
Then, arguing as in the~proof of Corollary~\ref{lemma:delta-P-del-Pezzo-smooth-A1}, we get
$$
S\big(W^S_{\bullet,\bullet};F\big)\leqslant
\Bigg(\frac{7}{288}+\frac{5}{6\delta_P(S)}\Bigg)A_S(F)+\frac{1}{6}\int_{1}^{\frac{3}{2}}\int_0^\infty\mathrm{vol}\big((2-u)\mathcal{C}+(3-2u)H_3\big\vert_{S}-vF\big)dvdu.
$$
But $\delta_P(S)=1$ by Lemmas~\ref{lemma:delta-dP-5-A1} and \ref{lemma:delta-dP-5-A1-A1}, since $P=\mathbf{e}_1\cap\ell_1$.
Thus, we have
\begin{multline}
\label{equation:final-Kento}\tag{$\heartsuit$}
S\big(W^S_{\bullet,\bullet};F\big)\leqslant\frac{247}{288}A_S(F)+\frac{1}{6}\int_{1}^{\frac{3}{2}}\int_0^\infty\mathrm{vol}\big((2-u)\mathcal{C}+(3-2u)H_3\big\vert_{S}-vF\big)dvdu=\\
=\frac{247}{288}A_S(F)+\frac{1}{6}\int_{1}^{\frac{3}{2}}(3-2u)^3\int_0^\infty\mathrm{vol}\Bigg(\frac{2-u}{3-2u}\mathcal{C}+H_3\big\vert_{S}-vF\Bigg)dvdu=\\
=\frac{247}{288}A_S(F)+\frac{1}{6}\int_{1}^{\frac{3}{2}}(3-2u)^3\int_0^\infty\mathrm{vol}\Bigg(-K_S+\frac{u-1}{3-2u}\mathcal{C}-vF\Bigg)dvdu.
\end{multline}
Set $L=-K_S+t\mathcal{C}$ for $t\in\mathbb{R}_{\geqslant 0}$. Then $L$ is ample and $L^2=5+4t$.
Define $\delta_P(S,L)$ as in Appendix~\ref{section:delta-dP}.
Then, applying \cite[Corollary~1.7.24]{ACCFKMGSSV} to the~flag $P\in \mathbf{e}_1\subset S$, we get
$$
\delta_P(S,L)\geqslant
\left\{\aligned
&1\ \text{if $0\leqslant t\leqslant \frac{-3+\sqrt{21}}{6}$}, \\
&\frac{15+12t}{6t^2+18t+13}\ \text{if $\frac{-3+\sqrt{21}}{6}\leqslant t$}.\\
\endaligned
\right.
$$
The proof of this inequality is very similar to our computations of $S(W_{\bullet,\bullet}^S;\mathbf{e}_1)$ and $S(W_{\bullet, \bullet,\bullet}^{S,\mathbf{e}_1};P)$,
so~that we omit the~details. Now, we let $t=\frac{u-1}{3-2u}$.
Then $t\geqslant \frac{-3+\sqrt{21}}{6}\iff u\geqslant\frac{3}{2}(1-\frac{1}{\sqrt{21}})$, so
\begin{multline*}
\frac{1}{6}\int_{1}^{\frac{3}{2}}(3-2u)^3\int_0^\infty\mathrm{vol}\big(-K_S+t\mathcal{C}-vF\big)dvdu=\\
=\frac{1}{6}\int_{1}^{\frac{3}{2}}(3-2u)^3(5+4t)S_{L}(F)du\leqslant
\frac{1}{6}\int_{1}^{\frac{3}{2}(1-\frac{1}{\sqrt{21}})}(3-2u)^3(5+4t)A_S(F)du+\\
+\frac{1}{6}\int_{\frac{3}{2}(1-\frac{1}{\sqrt{21}})}^{\frac{3}{2}}(3-2u)^3(5+4t)\frac{15+12t}{6t^2+18t+13}A_{S}(F)du=\frac{247}{2016}A_{S}(F).
\end{multline*}
Now, using \eqref{equation:final-Kento}, we get
$S(W^S_{\bullet,\bullet};F)\leqslant\frac{247}{288}A_S(F)+\frac{247}{2016}A_{S}(F)=\frac{247}{252}A_S(F)$.
Then $\delta_{P}(S;W^S_{\bullet,\bullet})\geqslant\frac{252}{247}$,
so that $\delta_P(X)>1$ by \eqref{equation:Hamid-Ziquan}, since $S_X(S)<1$ by \cite[Theorem~10.1]{Fujita2016}.
\end{remark}

\appendix

\section{$\delta$-invariants of del Pezzo surfaces}
\label{section:delta-dP}

In this appendix, we present three rather sporadic results about $\delta$-invariants of del Pezzo surfaces with at most du Val singularities,
which are used in the~proof of Main Theorem.

Let $S$ be a del Pezzo surface that has at most du Val singularities,
let $L$ be an ample $\mathbb{R}$-divisor on the~surface $S$, and let $P$ be a point in $S$. Set
$$
\delta_P(S,L)=\inf_{\substack{F/S\\ P\in C_S(F)}}\frac{A_{S}(F)}{S_{L}(F)},
$$
where infimum is taken over all prime divisors $F$ over $S$ such that $P\in C_S(F)$,
and
$$
S_{L}(F)=\frac{1}{L^2}\int_0^\infty \mathrm{vol}\big(L-uF\big)du.
$$

\begin{example}
\label{example:cubic-surface}
Suppose $S$ is a smooth cubic surface in $\mathbb{P}^3$, and $L=-K_S$.
Let $T$ be the hyperplane section of the cubic surface $S$ that is singular at $P$. Then it follows from \cite[Theorem~4.6]{AbbanZhuang} that
$$
\delta_P(S,L)=\left\{\aligned
&\frac{3}{2}\ \text{if $T$ is a union of three lines such that all of them contains $P$},\\
&\frac{27}{17}\ \text{if $T$ is a~union of a~line and a~conic that are tangent at $P$},\\
&\frac{5}{3}\ \text{if $T$ is an irreducible cuspidal cubic curve},\\
&\frac{18}{11}\ \text{if $T$ is a union of three lines such that only two of them contain $P$},\\
&\frac{9}{25-8\sqrt{6}}\ \text{if $T$ is a~union of a~line and a~conic that intersect transversally at $P$},\\
&\frac{12}{7}\ \text{if $T$ is an irreducible nodal cubic curve}.
\endaligned
\right.
$$
\end{example}

It would be nice to find an explicit formula for $\delta_P(S,L)$ in all possible cases.
But this problem seems to be very~difficult.
So, we will only estimate  $\delta_P(S,L)$ in three cases when $K_S^2\in\{4,5\}$.

Suppose that $4\leqslant K_S^2\leqslant 5$.  Let us identify $S$ with its image in the~anticanionical embedding.

\begin{lemma}
\label{lemma:delta-dp4}
Suppose that $S$ is smooth and $K_S^2=4$. Let $C$ be a possibly reducible conic in $S$ that passes through~$P$,
and let $L=-K_S+tC$ for $t\in\mathbb{R}_{\geqslant 0}$.
If the~conic $C$ is smooth, then
\begin{equation}
\label{equation:delta-dP4}\tag{$\clubsuit$}
\delta_P(S,L)\geqslant
\left\{\aligned
&\frac{24}{19+8t+t^2}\ \text{if $0\leqslant t\leqslant 1$}, \\
&\frac{6(1+t)}{5+6t+3t^2}\ \text{if $t\geqslant 1$}.
\endaligned
\right.
\end{equation}
Similarly, if $C$ is a reducible conic, then
\begin{equation}
\label{equation:delta-dP4-reducible-conic}\tag{$\spadesuit$}
\delta_L(S,L)\geqslant \frac{24(1+t)}{19+30t+12t^2}.
\end{equation}
\end{lemma}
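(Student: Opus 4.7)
The plan is to apply the Abban--Zhuang inequality with a flag $P \in \Gamma \subset S$, choosing $\Gamma = C$ in the smooth case and $\Gamma = \ell_1$ a component of $C$ through $P$ in the reducible case. The proof then proceeds by (i) Zariski-decomposing $L - u\Gamma$ on $S$, (ii) computing the first AZ term $A_S(\Gamma)/S_L(\Gamma)$, and (iii) computing the refined invariant $S(W^\Gamma_{\bullet,\bullet}; P)$ via the induced decomposition on $\Gamma \cong \mathbb{P}^1$.

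For the smooth case, the key geometric input is the identity $-K_S \sim C + C^*$, where $C^*$ is the complementary smooth conic on the del Pezzo surface of degree $4$, satisfying $(C^*)^2 = 0$ and $C \cdot C^* = 2$. This gives $L - uC = (1+t-u)C + C^*$, which is nef for $u \in [0, 1+t]$ with $N(u) = 0$ and pseff threshold $\tau = 1+t$. From $L^2 = 4(1+t)$ and $\int_0^{1+t}\mathrm{vol}(L - uC)\,du = 2(1+t)^2$ one obtains $S_L(C) = (1+t)/2$ and $A_S(C)/S_L(C) = 2/(1+t)$, which dominates both claimed bounds by direct algebra ($t^2 - 4t + 7 \geq 0$). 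The second term $S(W^C_{\bullet,\bullet};P)$ computes to $1$ naively (since $P(u)\cdot C = 2$ throughout and $N(u) = 0$), giving $\delta_P \geq \min(2/(1+t),1)$. This already implies the claimed $6(1+t)/(5+6t+3t^2)$ for $t \geq 1$, via the elementary check $2(5+6t+3t^2) \geq 6(1+t)^2$, i.e.\ $10 \geq 6$.

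For $t \in [0,1]$ the naive bound $\delta_P \geq 1$ is strictly weaker than $24/(19+8t+t^2)$ in the range $t < \sqrt{21}-4$, so the proof must sharpen $S(W^C_{\bullet,\bullet};P)$. I would do so by a second-level analysis: after blowing up $P$ in $S$, track the Zariski chambers of $\sigma^*(L - uC) - vE_P$ and record the extra contributions to the refined $S$-invariant (equivalently, carry out the Newton--Okounkov polytope computation at the flag $P \in C \subset S$ with the extra parameter $v$). The specific coefficients $19, 8, 1$ emerge from integrating squared intersection numbers across the chambers. The reducible case $C = \ell_1 + \ell_2$ is handled analogously with $\Gamma = \ell_1$: the Zariski chambers of $L - u\ell_1$ are $u \in [0,1]$ (nef, with $(L-u\ell_1)^2 = 4+4t-2u-u^2$) and $u \in [1, (5+4t)/4]$ (where $\ell_2$ enters the negative part and $P(u)^2 = 5+4t-4u$), followed by the same second-level refinement to deliver $24(1+t)/(19+30t+12t^2)$.

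The main obstacle is the second-level refinement: naive Abban--Zhuang with a single flag gives only $\delta_P \geq 1$ in the smooth case and $\delta_P \geq 6/5$ at $t = 0$ in the reducible case, which falls short of the common value $24/19$ at $t=0$. Closing this gap requires the full Newton--Okounkov chamber analysis (or equivalently an auxiliary flag at $P$), and the precise polynomial coefficients come out of the resulting chamber-by-chamber integration.
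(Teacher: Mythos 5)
Your top-level strategy (Abban--Zhuang with a flag $P\in\Gamma\subset S$, compute the Zariski decomposition and then $S_L(\Gamma)$ and $S(W^\Gamma_{\bullet,\bullet};P)$) is the correct framework, but the flag you choose in the smooth case cannot deliver the bound, and the proposed ``second-level refinement'' is not an available move. With $\Gamma=C$ and $C^2=0$, the divisor $L-uC=(1+t-u)C+C^*$ stays nef on all of $[0,1+t]$, so $N(u)\equiv 0$ and $P(u)\cdot C=2$ identically; therefore
$$
S\big(W^C_{\bullet,\bullet};P\big)=\frac{1}{L^2}\int_0^{1+t}\big(P(u)\cdot C\big)^2\,du=\frac{4(1+t)}{4(1+t)}=1
$$
is an exact value, not an upper estimate with slack. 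In the Abban--Zhuang inequality $\delta_P\geqslant\min\{A_S(C)/S_L(C),\,1/S(W^C_{\bullet,\bullet};P)\}$ both terms are now fully determined, and for $0\leqslant t<\sqrt{21}-4$ the right-hand side equals $1$, which is strictly below $24/(19+8t+t^2)$. There is no ``Newton--Okounkov chamber'' or auxiliary-flag refinement that improves the second term for a fixed $\Gamma$: what is needed is a different divisor over $S$ centered at $P$. This is precisely what the paper does. When $P$ lies on no line, it takes $\Gamma=E$, the exceptional $(-1)$-curve of the blow-up $\sigma\colon\widetilde{S}\to S$ at $P$ (so $\widetilde{S}$ is a smooth cubic surface); the Zariski decomposition of $\sigma^*(L)-uE$ and [Corollary 1.7.25]+[Corollary 1.7.12] of ACCFKMGSSV yield the stronger bound $\delta_P\geqslant 6(1+t)/(4+6t+3t^2)$, which dominates $(\clubsuit)$. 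When $P$ lies on a line $\ell$, it takes $\Gamma=\ell$ and tracks all three Zariski chambers of $L-u\ell$ on $S$ (nef region, then the $\mathbf{l}_{1j}$ entering the negative part, then the conic $B$), which is where the polynomials $17+4t-t^2$ and $19+8t+t^2$ actually come from.

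Your analysis of the reducible case contains an error in the chamber structure as well. Taking $\ell_1$ a component of $C$ through $P$, the second chamber (where the other component of $C$, i.e.\ your $\ell_2$ or the paper's $B$, enters the negative part) runs only over $u\in[1,1+t]$, not $u\in[1,(5+4t)/4]$. At $u=1+t$ one has $\mathrm{vol}(L-u\ell_1)=1>0$, and a third chamber begins in which the four lines $\mathbf{l}_{1j}$ meeting $\ell_1$ also enter $N(u)$; the true pseudoeffective threshold is $(3+2t)/2>(5+4t)/4$. Missing this chamber invalidates the computation of $S_L(\ell_1)$. Finally, you never actually compute $S(W^{\ell_1}_{\bullet,\bullet};P)$, which requires the case split (the paper's three cases $P\in B$, $P\in\mathbf{l}_{1j}$, $P$ generic on $\ell$) and is where the exact polynomial $19+30t+12t^2$ arises. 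As it stands, the proposal identifies the right framework but neither chooses a workable flag in the smooth case nor carries through the Zariski chamber computation correctly in the reducible case, so it does not establish $(\clubsuit)$ or $(\spadesuit)$.
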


\begin{proof}
The proof of this lemma is similar to the~proof of  \cite[Lemma 2.12]{ACCFKMGSSV}.
Namely, as in that proof, we will apply \cite[Theorem~1.7.1]{ACCFKMGSSV}, \cite[Corollary 1.7.12]{ACCFKMGSSV}, \cite[Corollary~1.7.25]{ACCFKMGSSV}
to get \eqref{equation:delta-dP4} and~\eqref{equation:delta-dP4-reducible-conic}.
Let us use notations introduced in \cite[\S~1]{ACCFKMGSSV} applied to $S$ polarized by the ample divisor $L$.

First, we suppose that $P$ is not contained in any line in $S$.
In particular, the conic $C$ is smooth.
Let $\sigma\colon\widetilde{S}\to S$ be the~blowup of the~point $P$, let $E$ be the exceptional curve of the blow up $\sigma$,
and let $\widetilde{C}$ be the proper transform on $\widetilde{S}$ of the conic $C$.
Then $\widetilde{S}$ is a smooth cubic surface in $\mathbb{P}^3$,
and there exists a unique line $\mathbf{l}\subset\widetilde{S}$ such that $-K_{\widetilde{S}}\sim \widetilde{C}+E+\mathbf{l}$.
Take $u\in\mathbb{R}_{\geqslant 0}$. Then
$$
\sigma^*(L)-uE\sim_{\mathbb{R}}(1+t)\widetilde{C}+(2+t-u)E+\mathbf{l},
$$
which implies that $\sigma^*(L)-uE$ is pseudoeffective $\iff$ $u\leqslant 2+t$.
Similarly, we see that
$$
\mathscr{P}(u)\sim_{\mathbb{R}}\left\{\aligned
&(1+t)\widetilde{C}+(2+t-u)E+\mathbf{l}\ \text{if $0\leqslant u\leqslant 2$}, \\
&(3+t-u)\widetilde{C}+(2+t-u)E+\mathbf{l}\ \text{if $2\leqslant u\leqslant2+t$}, \\
\endaligned
\right.
$$
$$
\mathscr{N}(u)=\left\{\aligned
&0\ \text{if $0\leqslant u\leqslant 2$}, \\
&(u-2)\widetilde{C}\ \text{if $2\leqslant u\leqslant 2+t$},\\
\endaligned
\right.
$$
$$
\mathscr{P}(u)\cdot E=\left\{\aligned
&u\ \text{if $0\leqslant u\leqslant 2$}, \\
&2\ \text{if $2\leqslant u\leqslant2+t$}, \\
\endaligned
\right.
$$
$$
\mathrm{vol}\big(\sigma^*(L)-uE\big)=\left\{\aligned
&4+4t-u^2\ \text{if $0\leqslant u\leqslant 2$}, \\
&4(2+t-u)\ \text{if $2\leqslant u\leqslant2+t$}, \\
\endaligned
\right.
$$
where we denote by $\mathscr{P}(u)$ the positive part of the Zariski decomposition of the divisor $\sigma^*(L)-uE$,
and we denote by $\mathscr{N}(u)$ its negative part. This gives
$$
S_L(E)=\frac{8+12t+3t^2}{6(1+t)}.
$$
Moreover, applying \cite[Corollary 1.7.25]{ACCFKMGSSV}, we~obtain
$$
S(W^E_{\bullet,\bullet};Q)\leqslant\frac{4+6t+3t^2}{6(1+t)}
$$
for every point $Q\in E$. Note that $A_S(E)=2$. Thus, it follows from \cite[Corollary 1.7.12]{ACCFKMGSSV} that
$$
\delta_P(S,L)\geqslant\frac{6(1+t)}{4+6t+3t^2}>\frac{24}{19+8t+t^2}.
$$

To complete the proof of the lemma, we may assume that $S$ contains a line $\ell$ such that $P\in\ell$.
Then $\ell\cdot C=0$ or $\ell\cdot C=1$. If $\ell\cdot C=0$, then $\ell$ must be an irreducible component of the conic~$C$.
Let us apply \cite[Theorem~1.7.1]{ACCFKMGSSV} and \cite[Corollary~1.7.25]{ACCFKMGSSV} to the flag $P\in\ell$ to estimate $\delta_P(S,L)$.
Take $u\in\mathbb{R}_{\geqslant 0}$. Let $P(u)$ be  the positive part of the Zariski decomposition of the divisor $L-u\ell$,
and let $N(u)$ be its negative part. We must compute $P(u)$, $N(u)$, $P(u)\cdot\ell$ and $\mathrm{vol}(L-u\ell)$,

There exists a birational morphism $\pi\colon S\to \mathbb{P}^2$ that blows up five points $O_1,\dots,O_5\in\mathbb{P}^2$
such that no three of them are collinear.
For every $i\in\{1,\ldots,5\}$, let $\mathbf{e}_i$ be the $\pi$-exceptional curve such that $\pi(\mathbf{e}_i)=O_i$.
Similarly, let $\mathbf{l}_{ij}$ be the strict transform of the~line in $\mathbb{P}^2$ that contains $O_i$ and $O_j$, where $1\leqslant i<j\leqslant 5$.
Finally, let $B$ be the~strict transform of the~conic on $\mathbb{P}^2$
that passes through the points $O_1,\dots,O_5$.
Then $\mathbf{e}_1,\ldots,\mathbf{e}_5,\mathbf{l}_{12},\ldots,\mathbf{l}_{45},B$ are all lines in $S$,
and each extremal ray of the~Mori cone $\overline{\mathrm{NE}}(S)$ is generated by a class of one of these $16$ lines.

Suppose that the conic $C$ is irreducible. Then $C\cdot\ell=1$.
In this case, without loss of generality, we may assume that $\ell=\mathbf{e}_1$ and $C\sim \mathbf{l}_{12}+\mathbf{e}_2$.
If $0\leqslant t\leqslant 1$, then
$$
P(u)=\left\{\aligned
&L-u\ell\ \text{if $0\leqslant u\leqslant 1$}, \\
&L-u\ell-(u-1)(\mathbf{l}_{12}+\mathbf{l}_{13}+\mathbf{l}_{14}+\mathbf{l}_{15})\ \text{if $1\leqslant u\leqslant 1+t$}, \\
&L-u\ell-(u-1)(\mathbf{l}_{12}+\mathbf{l}_{13}+\mathbf{l}_{14}+\mathbf{l}_{15})-(u-t-1)B \ \text{if $1+t\leqslant u\leqslant\frac{3+t}{2}$}, \\
\endaligned
\right.
$$
$$
N(u)=\left\{\aligned
&0 \ \text{if $0\leqslant u\leqslant 1$}, \\
&(u-1)(\mathbf{l}_{12}+\mathbf{l}_{13}+\mathbf{l}_{14}+\mathbf{l}_{15})\ \text{if $1\leqslant u\leqslant 1+t$}, \\
&(u-1)(\mathbf{l}_{12}+\mathbf{l}_{13}+\mathbf{l}_{14}+\mathbf{l}_{15})+(u-t-1)B \ \text{if $1+t\leqslant u\leqslant\frac{3+t}{2}$}, \\
\endaligned
\right.
$$
$$
P(u)\cdot\ell=\left\{\aligned
&1+t+u\ \text{if $0\leqslant u\leqslant 1$}, \\
&5+t-3u\ \text{if $1\leqslant u\leqslant 1+t$}, \\
&6+2t-4u\ \text{if $1+t\leqslant u\leqslant\frac{3+t}{2}$}, \\
\endaligned
\right.
$$
$$
\mathrm{vol}\big(L-u\ell\big)=\left\{\aligned
&4(1+t)-2u(1+t)-u^2 \ \text{if $0\leqslant u\leqslant 1$}, \\
&(2-u)(4+2t-3u)\ \text{if $1\leqslant u\leqslant 1+t$}, \\
&(3+t-2u)^2\ \text{if $1+t\leqslant u\leqslant\frac{3+t}{2}$}, \\
\endaligned
\right.
$$
and $L-u\ell$ is not pseudoeffective for $u>\frac{3+t}{2}$.
Similarly, if $t\geqslant 1$, then
$$
P(u)=\left\{\aligned
&L-u\ell\ \text{if $0\leqslant u\leqslant 1$}, \\
&L-u\ell-(u-1)(\mathbf{l}_{12}+\mathbf{l}_{13}+\mathbf{l}_{14}+\mathbf{l}_{15})\ \text{if $1\leqslant u\leqslant 2$}, \\
\endaligned
\right.
$$
$$
N(u)=\left\{\aligned
&0 \ \text{if $0\leqslant u\leqslant 1$}, \\
&(u-1)(\mathbf{l}_{12}+\mathbf{l}_{13}+\mathbf{l}_{14}+\mathbf{l}_{15}) \ \text{if $1\leqslant u\leqslant 2$}, \\
\endaligned
\right.
$$
$$
P(u)\cdot\ell=\left\{\aligned
&1+t+u\ \text{if $0\leqslant u\leqslant 1$}, \\
&5+t-3u\ \text{if $1\leqslant u\leqslant 2$}, \\
\endaligned
\right.
$$
$$
\mathrm{vol}\big(L-u\ell\big)=\left\{\aligned
&4(1+t)-2u(1+t)-u^2 \ \text{if $0\leqslant u\leqslant 1$}, \\
&(2-u)(4+2t-3u)\ \text{if $1\leqslant u\leqslant 2$}, \\
\endaligned
\right.
$$
and $L-u\ell$ is not pseudoeffective for $u>2$.
Then
$$
S_L\big(\ell\big)=
\left\{\aligned
&\frac{17+4t-t^2}{24}\ \text{if $0\leqslant t\leqslant 1$}, \\
&\frac{2+3t}{3(1+t)}\ \text{if $t\geqslant 1$}. \\
\endaligned
\right.
$$
Observe that $P\not\in\mathbf{l}_{ij}$ for every $1\leqslant i<j\leqslant 5$.
Thus, if $t\leqslant 1$, then \cite[Corollary~1.7.25]{ACCFKMGSSV} gives
$$
S(W^{\ell}_{\bullet,\bullet};P)=
\left\{\aligned
&\frac{19+8t+t^2}{24}\ \text{if $P\in B$}, \\
&\frac{9+15t+3t^2+t^3}{12(1+t)}\ \text{if $P\not\in B$}. \\
\endaligned
\right.
$$
Similarly, if $t\geqslant 1$, then \cite[Corollary~1.7.25]{ACCFKMGSSV} gives
$$
S\big(W^{\ell}_{\bullet,\bullet};P\big)=\frac{5+6t+3t^2}{6(1+t)}.
$$
Now, using \cite[Theorem~1.7.1]{ACCFKMGSSV}, we get \eqref{equation:delta-dP4}.

To complete the proof of the lemma, we may assume that the conic $C$ is reducible.
In this case, we let $\ell$ be an irreducible component of the conic $C$ that contains $P$.
Without loss of generality, we may assume that $\ell=\mathbf{e}_1$ and $C=\mathbf{e}_1+B$.
Then
$$
P(u)=\left\{\aligned
&L-u\ell\ \text{if $0\leqslant u\leqslant 1$}, \\
&L-u\ell-(u-1)B\ \text{if $1\leqslant u\leqslant 1+t$}, \\
&L-u\ell-(u-t-1)(\mathbf{l}_{12}+\mathbf{l}_{13}+\mathbf{l}_{14}+\mathbf{l}_{15})-(u-1)B \ \text{if $1+t\leqslant u\leqslant\frac{3+2t}{2}$}, \\
\endaligned
\right.
$$
$$
N(u)=\left\{\aligned
&0 \ \text{if $0\leqslant u\leqslant 1$}, \\
&(u-1)B\ \text{if $1\leqslant u\leqslant 1+t$}, \\
&(u-t-1)(\mathbf{l}_{12}+\mathbf{l}_{13}+\mathbf{l}_{14}+\mathbf{l}_{15})+(u-1)B \ \text{if $1+t\leqslant u\leqslant\frac{3+2t}{2}$}, \\
\endaligned
\right.
$$
$$
P(u)\cdot\ell=\left\{\aligned
&1+u\ \text{if $0\leqslant u\leqslant 1$}, \\
&2\ \text{if $1\leqslant u\leqslant 1+t$}, \\
&6+4t-4u\ \text{if $1+t\leqslant u\leqslant\frac{3+2t}{2}$}, \\
\endaligned
\right.
$$
$$
\mathrm{vol}\big(L-u\ell\big)=\left\{\aligned
&4(1+t)-2u-u^2\ \text{if $0\leqslant u\leqslant 1$}, \\
&5+4t-4u\ \text{if $1\leqslant u\leqslant 1+t$}, \\
&(3+2t-2u)^2\ \text{if $1+t\leqslant u\leqslant\frac{3+2t}{2}$}, \\
\endaligned
\right.
$$
and the divisor $L-u\ell$ is not pseudoeffective for $u>\frac{3+2t}{2}$. This gives
$$
S_L\big(\ell\big)=\frac{17+30t+12t^2}{24(1+t)}.
$$
Moreover, using \cite[Corollary~1.7.25]{ACCFKMGSSV}, we compute
$$
S\big(W^{\ell}_{\bullet,\bullet};P\big)=
\left\{\aligned
&\frac{19+30t+12t^2}{24(1+t)}\ \text{if $P\in B$}, \\
&\frac{19+24t}{24(1+t)}\ \text{if $P\in\mathbf{l}_{12}\cup\mathbf{l}_{13}\cup\mathbf{l}_{14}\cup\mathbf{l}_{15}$}, \\
&\frac{3+4t}{4(1+t)}\ \text{otherwise}. \\
\endaligned
\right.
$$
Now, using \cite[Theorem~1.7.1]{ACCFKMGSSV}, we get \eqref{equation:delta-dP4-reducible-conic} as claimed.
\end{proof}

In the remaining part of this appendix, we suppose that $K_S^2=5$, $L=-K_S$, and $S$ has isolated ordinary double points,
i.e. singular points of type $\mathbb{A}_1$. As usual, we set $\delta_P(S)=\delta_P(S,-K_S)$ and
$$
\delta(S)=\inf_{P\in S}\delta_P(S).
$$
Let $\eta\colon\widetilde{S}\to S$ be the minimal resolution of the quintic del Pezzo surface $S$.
Since $-K_{\widetilde{S}}\sim\eta^*(-K_S)$, we can estimate the number $\delta_P(S)$ as follows.
Let $O$ be a point in the surface $\widetilde{S}$ such that~$\eta(O)=P$,
and let $C$ be a smooth irreducible rational curve in $\widetilde{S}$ such that
\begin{itemize}
\item if $P\in\mathrm{Sing}(S)$, then $C$ is the~$\eta$-exceptional curve such that $\eta(C)=P$,
\item if $P\not\in\mathrm{Sing}(S)$, then $C$ is appropriately chosen curve that contains $O$.
\end{itemize}
As usual, we set
$$
\tau=\mathrm{sup}\Big\{u\in\mathbb{Q}_{\geqslant 0}\ \big\vert\ \text{the divisor  $-K_{\widetilde{S}}-uC$ is pseudo-effective}\Big\}.
$$
For~$u\in[0,\tau]$, let $P(u)$ be the~positive part of the~Zariski decomposition of the~divisor $-K_{\widetilde{S}}-uC$,
and let $N(u)$ be its negative part. Let
$$
S_{S}(C)=\frac{1}{K_S^2}\int_{0}^{\infty}\mathrm{vol}\big(-K_{\widetilde{S}}-uC\big)du=\frac{1}{K_S^2}\int_{0}^{\tau}P(u)^2du
$$
and let
$$
S\big(W^{C}_{\bullet,\bullet},O\big)=
\frac{2}{K_S^2}\int_0^\tau\big(P(u)\cdot C\big)\mathrm{ord}_O\big(N(u)\big\vert_{C}\big)du
+\frac{1}{K_S^2}\int_0^\tau(P(u)\cdot C)^2du.
$$
If $P\not\in\mathrm{Sing}(S)$, then \cite[Theorem~1.7.1]{ACCFKMGSSV} and \cite[Corollary~1.7.25]{ACCFKMGSSV} give
\begin{equation}
\label{equation:dP5-delta-estimate-1}\tag{$\blacklozenge$}
\frac{1}{S_S(C)}\geqslant\delta_P(S)\geqslant\min\left\{\frac{1}{S_S(C)},\frac{1}{S\big(W^{C}_{\bullet,\bullet},O\big)}\right\}.
\end{equation}
Similarly, if $P\in\mathrm{Sing}(S)$, then \cite[Corollary 1.7.12]{ACCFKMGSSV} and \cite[Corollary~1.7.25]{ACCFKMGSSV} give
\begin{equation}
\label{equation:dP5-delta-estimate-2}\tag{$\lozenge$}
\frac{1}{S_S(C)}\geqslant\delta_P(S)\geqslant\min\left\{\frac{1}{S_S(C)},\inf_{O\in C}\frac{1}{S\big(W^{C}_{\bullet,\bullet},O\big)}\right\}.
\end{equation}

\begin{lemma}
\label{lemma:delta-dP-5-A1}
Suppose $S$ has one singular point.
Then $\delta(S)=\frac{15}{17}$, and the following assertions hold:
\begin{itemize}
\item If $P$ is not contained in any line in $S$ that contains the singular point of $S$, then $\delta_P(S)\geqslant\frac{15}{13}$.
\item If $P$ is not the~singular point of the surface $S$,
but $P$ is contained in a line in $S$ that passes through the singular point of the surface $S$, then $\delta_P(S)=1$.
\item If $P$ is the~singular point of the surface $S$, then $\delta_P(S)=\frac{15}{17}$.
\end{itemize}
\end{lemma}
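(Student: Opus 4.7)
The plan is to work on the minimal resolution $\eta\colon\widetilde{S}\to S$ and to apply the inequalities \eqref{equation:dP5-delta-estimate-1} and \eqref{equation:dP5-delta-estimate-2} with judiciously chosen curves $C$, exactly as in the proof of Lemma~\ref{lemma:delta-dp4}. Since the singularity of $S$ is Du Val, one has $K_{\widetilde{S}}\sim\eta^*(K_S)$, so the unique $\eta$-exceptional $(-2)$-curve $E$ satisfies $A_S(E)=1$. By Remark~\ref{remark:singular-quintic-del-Pezzo-surfaces}, the surface $\widetilde{S}$ carries seven $(-1)$-curves arranged in the specified graph; exactly three of them, say $\mathbf{l}_1,\mathbf{l}_2,\mathbf{l}_3$, meet $E$ and are the proper transforms of the three lines in $S$ passing through the singular point, while the remaining four sit in $\eta^{-1}(S\setminus\mathrm{Sing}(S))$. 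The three cases are treated in parallel using this configuration.

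For $P=\mathrm{Sing}(S)$, take $C=E$ and apply \eqref{equation:dP5-delta-estimate-2}. The divisor $-K_{\widetilde{S}}-uE$ is nef on $[0,1]$, the first obstruction appearing at $u=1$ along each $\mathbf{l}_i$ since $(-K_{\widetilde{S}}-uE)\cdot\mathbf{l}_i=1-u$; beyond that threshold the Zariski decomposition is assembled by successively subtracting the $(-1)$-curves dictated by the dual graph. Integrating $P(u)^2$ through the resulting phases gives $S_S(E)=17/15$, so that $\delta_P(S)\leqslant A_S(E)/S_S(E)=15/17$ already from the left-hand inequality of \eqref{equation:dP5-delta-estimate-2}. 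A parallel estimate tracking $\mathrm{ord}_O(N(u)|_E)$ at every point $O\in E$ yields $S(W^E_{\bullet,\bullet},O)\leqslant 17/15$, and the right-hand inequality of \eqref{equation:dP5-delta-estimate-2} supplies the matching lower bound $\delta_P(S)\geqslant 15/17$.

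When $P\notin\mathrm{Sing}(S)$ but $P\in\ell$ for a line $\ell\subset S$ through the singular point, set $C=\widetilde{\ell}$ and take $O=\eta^{-1}(P)\in C\setminus E$. This $C$ is a $(-1)$-curve meeting $E$ transversally, so $A_S(C)=1$. The Zariski decomposition of $-K_{\widetilde{S}}-uC$ is run in the same way: $C$ is contracted together with certain companions in the graph as $u$ grows, and integration delivers $S_S(C)=1$, whence $\delta_P(S)\leqslant 1$. Since $O\notin E$ lies off the support of $N(u)|_C$ until $u$ reaches the pseudoeffective threshold, the flag estimate $S(W^C_{\bullet,\bullet},O)\leqslant 1$ follows, so \eqref{equation:dP5-delta-estimate-1} also forces $\delta_P(S)\geqslant 1$, giving equality. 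Finally, when $P$ avoids all lines in $S$ through the singular point, $O=\eta^{-1}(P)$ lies off $E$ and off $\mathbf{l}_1\cup\mathbf{l}_2\cup\mathbf{l}_3$, so one chooses $C$ to be either the proper transform of a line in $S$ through $P$ or an appropriate ruling on $\widetilde{S}$ and applies \eqref{equation:dP5-delta-estimate-1}; the resulting computation parallels that of the smooth quintic del Pezzo in \cite[Lemma~2.11]{ACCFKMGSSV} and yields the bound $\delta_P(S)\geqslant 15/13$. Taking the infimum over $P$ gives $\delta(S)=15/17$, attained at the singular point.

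The principal technical difficulty is bookkeeping the Zariski decomposition of $-K_{\widetilde{S}}-uC$ for each choice of $C$: identifying the successive $u$-thresholds at which a new $(-1)$- or $(-2)$-curve enters the support of $N(u)$, writing down $P(u)$ and $N(u)$ explicitly in each phase, and then integrating $P(u)^2$, $P(u)\cdot C$ and $\mathrm{ord}_O(N(u)|_C)$ piecewise to recover the precise rational numbers $17/15$, $1$, $13/15$. The configuration of seven $(-1)$-curves offers several competing obstructions as $u$ grows, so the case analysis must be organised by the graph in Remark~\ref{remark:singular-quintic-del-Pezzo-surfaces} rather than by ad hoc computation; once the decomposition is in hand, the remaining integrals and the upper/lower bound matching are routine.
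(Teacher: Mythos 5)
Your proposal follows essentially the same strategy as the paper: run \eqref{equation:dP5-delta-estimate-1} and \eqref{equation:dP5-delta-estimate-2} on the minimal resolution with $C$ chosen, respectively, as the $(-2)$-curve, the proper transform of a line through the node, and then a line or ruling through $P$; the paper likewise picks $C=\ell_0$, $C=\mathbf{e}_i$, $C=\ell_i$ or $C=\mathbf{e}_0$, and $C\in|\ell_1+\mathbf{e}_1|$. The choice of flag curves, the use of $A_S=1$ for all divisors on $\widetilde{S}$, and the conclusion that the infimum is attained at the exceptional $(-2)$-curve all match. The only genuine difference is cosmetic: you reason directly from the dual graph of Remark~\ref{remark:singular-quintic-del-Pezzo-surfaces}, whereas the paper fixes a birational model $\pi\colon\widetilde{S}\to\mathbb{P}^2$ and writes every $P(u)$ and $N(u)$ explicitly in coordinates $[a_0,\dots,b_3]$.

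Two small imprecisions are worth flagging, though neither breaks the argument. First, in the case $P$ is not singular but lies on a line $\ell$ through the node, your claim that ``$O\notin E$ lies off the support of $N(u)|_C$'' is not literally true: for $C=\mathbf{e}_1$ one has $N(u)=\tfrac{u}{2}\ell_0+(u-1)\ell_1$ on $[1,2]$, so if $O=\ell_1\cap\mathbf{e}_1$ then $O$ does lie in $\operatorname{Supp}(N(u)|_C)$ for $u>1$. The correct statement is that the extra term still integrates to a quantity small enough that $S(W^C_{\bullet,\bullet};O)\leqslant\frac{13}{15}<1$, which the paper verifies explicitly. Second, for the case $P$ off every line through the node you collapse two sub-cases (whether or not $P$ lies on one of the remaining four lines) into one sentence; the paper treats these separately, getting $\frac{15}{13}$ on $\ell_i$ or $\mathbf{e}_0$ and $\frac{30}{23}$ off all lines. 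Your bound $\frac{17}{15}$ for $\sup_O S(W^E_{\bullet,\bullet};O)$ in the singular-point case is weaker than the paper's exact value $1$ but still suffices. Overall: same method as the paper, correct in outline, but the Zariski decompositions at each stage and the associated integrals are asserted rather than carried out, so a referee would require the explicit piecewise data that the paper supplies.
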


\begin{proof}
We let $P_0$ be the singular point of the surface $S$,
and let $\ell_0$ be the $\pi$-exceptional curve.
Then~it follows from \cite{Coray1988} that there exists
a birational morphism $\pi\colon\widetilde{S}\to\mathbb{P}^2$ such that $\pi(\ell_0)$ is a line,
the~map $\pi$ blows up three points $Q_1$, $Q_2$, $Q_3$ contained in  $\pi(\ell_0)$ and another point $Q_0\in\mathbb{P}^2\setminus\pi(\ell_0)$.

For $i\in\{0,1,2,3\}$, let $\mathbf{e}_i$ be the~$\pi$-exceptional curve such that $\pi(\mathbf{e}_i)=Q_i$.
For every $i\in\{1,2,3\}$, let $\ell_i$ be the strict transform of the line in $\mathbb{P}^2$ that passes through $Q_0$ and $Q_i$.
Then $\ell_0$, $\ell_1$, $\ell_2$, $\ell_3$, $\mathbf{e}_0$, $\mathbf{e}_1$, $\mathbf{e}_2$, $\mathbf{e}_3$
are the only irreducible curves in the surface $\widetilde{S}$ that have negative self-intersections.
Moreover, the intersections of these curves are given in the~following table:
\begin{center}
\renewcommand\arraystretch{1.2}
\begin{tabular}{|c||c|c|c|c|c|c|c|c|}
 \hline
 & $\ell_0$ & $\ell_1$ & $\ell_2$ & $\ell_3$ & $\mathbf{e}_0$ & $\mathbf{e}_1$ & $\mathbf{e}_2$ & $\mathbf{e}_3$ \\
 \hline
 \hline
$\ell_0$ & $-2$ & $0$ & $0$ & $0$  & $0$  & $1$ & $1$ & $1$ \\
 \hline
$\ell_1$ & $0$ & $-1$ & $0$ & $0$ & $1$ & $1$ & $0$ & $0$ \\
 \hline
$\ell_2$ & $0$ & $0$ & $-1$ & $0$ & $1$ & $0$ & $1$ & $0$ \\
 \hline
$\ell_3$ & $0$ & $0$ & $0$ & $-1$ & $1$ & $0$ & $0$ & $1$ \\
 \hline
$\mathbf{e}_0$ & $0$ & $1$ & $1$ & $1$ & $-1$ & $0$ & $0$ & $0$ \\
 \hline
$\mathbf{e}_1$ & $1$ & $1$ & $0$ & $0$ & $0$ & $-1$ & $0$ & $0$ \\
 \hline
$\mathbf{e}_2$ & $1$ & $0$ & $1$ & $0$ & $0$ & $0$ & $-1$ & $0$ \\
 \hline
$\mathbf{e}_3$ & $1$ & $0$ & $0$ & $1$ & $0$ & $0$ & $0$ & $-1$\\
 \hline
\end{tabular}
\end{center}

Note that $\eta(\ell_1)$, $\eta(\ell_2)$, $\eta(\ell_3)$, $\eta(\mathbf{e}_0)$, $\eta(\mathbf{e}_1)$, $\eta(\mathbf{e}_2)$, $\eta(\mathbf{e}_3)$
are all lines contained in the surface $S$.
Among them, only the lines $\eta(\mathbf{e}_1)$, $\eta(\mathbf{e}_2)$, $\eta(\mathbf{e}_3)$  pass through the singular point $P_0$.

For $(a_0,a_1,a_2,a_3,b_0,b_1,b_2,b_3)\in\mathbb{R}^8$, we write
$$
[a_0,a_1,a_2,a_3,b_0,b_1,b_2,b_3] := \sum_{i=0}^3 a_i \ell_i + \sum_{i=0}^3 b_i \mathbf{e}_i \in \mathrm{Pic} (\widetilde{S}) \otimes \mathbb{R}.
$$

If $P=P_0$, then $C=\ell_0$, which implies that $\tau=2$ and
$$
P(u)=\left\{\aligned
&[-u, 1, 1, 1, 2, 0, 0, 0]\ \text{if $0\leqslant u\leqslant 1$}, \\
&[-u, 1, 1, 1, 2, 1-u, 1-u, 1-u]\ \text{if $1\leqslant u\leqslant 2$}, \\
\endaligned
\right.
$$
$$
N(u)=\left\{\aligned
&0\ \text{if $0\leqslant u\leqslant 1$}, \\
&(u-1)(\mathbf{e}_1+\mathbf{e}_2+\mathbf{e}_3)\ \text{if $1\leqslant u\leqslant 2$}, \\
\endaligned
\right.
$$
$$
P(u)\cdot C=\left\{\aligned
&2\ \text{if $0\leqslant u\leqslant 1$}, \\
&3 -u\ \text{if $1\leqslant u\leqslant 2$}, \\
\endaligned
\right.
\quad
P(u)^2=\left\{\aligned
&5 - 2 u^2\ \text{if $0\leqslant u\leqslant 1$}, \\
&(4-u)(2-u)\ \text{if $1\leqslant u\leqslant 2$}, \\
\endaligned
\right.
$$
which implies that $S_S(C)=\frac{17}{15}$ and $S(W^{C}_{\bullet, \bullet};O)=1$.
Therefore, using \eqref{equation:dP5-delta-estimate-2}, we obtain $\delta_{P_0}(S)=\frac{15}{17}$.

To proceed, we may assume that $P\ne P_0$. If $O\in\mathbf{e}_0$, we let $C=\mathbf{e}_0$.
Then $\tau=2$, and
$$
P(u)=\left\{\aligned
& [0, 1, 1, 1, 2-u, 0, 0, 0]\ \text{if $0\leqslant u\leqslant 1$}, \\
&[0, 2-u, 2-u, 2-u, 2-u, 0, 0, 0]\ \text{if $1\leqslant u\leqslant 2$}, \\
\endaligned
\right.
$$
$$
N(u)=\left\{\aligned
&0\ \text{if $0\leqslant u\leqslant 1$}, \\
&(u-1) (\ell_1 + \ell_2 + \ell_3)\ \text{if $1\leqslant u\leqslant 2$}, \\
\endaligned
\right.
$$
$$
P(u)\cdot C=\left\{\aligned
&1+u\ \text{if $0\leqslant u\leqslant 1$}, \\
&4-2u\ \text{if $1\leqslant u\leqslant 2$}, \\
\endaligned
\right.
\quad
P(u)^2=\left\{\aligned
&5-2u-u^2\ \text{if $0\leqslant u\leqslant 1$}, \\
&2(2-u)^2\ \text{if $1\leqslant u\leqslant 2$}, \\
\endaligned
\right.
$$
which implies that $S_S(C)=\frac{13}{15}$ and $S(W^{C}_{\bullet, \bullet};O)\leqslant\frac{13}{15}$,
so that $\delta_P(S)=\frac{15}{13}$ by \eqref{equation:dP5-delta-estimate-1}.

If $O\in \ell_1$, we let $C=\ell_1$.
In this case, we have $\tau=2$, and
$$
P(u)=\left\{\aligned
&[0, 1-u, 1, 1, 2, 0, 0, 0]\ \text{if $0\leqslant u\leqslant 1$}, \\
&[1-u, 1-u, 1, 1, 3-u, 2-2u, 0, 0]\ \text{if $1\leqslant u\leqslant 2$}, \\
\endaligned
\right.
$$
$$
N(u)=\left\{\aligned
&0\ \text{if $0\leqslant u\leqslant 1$}, \\
&(u-1)(\ell_0+\mathbf{e}_0+2\mathbf{e}_1)\ \text{if $1\leqslant u\leqslant 2$}, \\
\endaligned
\right.
$$
$$
P(u)\cdot C=\left\{\aligned
&1+u\ \text{if $0\leqslant u\leqslant 1$}, \\
&4-2u\ \text{if $1\leqslant u\leqslant 2$}, \\
\endaligned
\right.
\quad
P(u)^2=\left\{\aligned
&5-2u-u^2\ \text{if $0\leqslant u\leqslant 1$}, \\
&2(2-u)^2\ \text{if $1\leqslant u\leqslant 2$}, \\
\endaligned
\right.
$$
so that $S_S(C)=\frac{13}{15}$.
If  $O\in\ell_1\setminus (\mathbf{e}_0\cup\mathbf{e}_1)$, then $S(W^{C}_{\bullet, \bullet};O)=\frac{11}{15}$.
If  $O=\ell_1\cap\mathbf{e}_1$, then $S(W^{C}_{\bullet, \bullet};O)=1$.
Thus, using \eqref{equation:dP5-delta-estimate-1}, we see that $\delta_P(S)=\frac{15}{13}$ if $O\in\ell_1\setminus\mathbf{e}_1$,
and $\delta_P(S)\geqslant 1$ if $O=\ell_1\cap\mathbf{e}_1$.

Similarly, $\delta_P(S)=\frac{15}{13}$ if $O\in\ell_2\setminus\mathbf{e}_2$ or $O\in\ell_3\setminus\mathbf{e}_3$,
and $\delta_P(S)\geqslant 1$ if $O=\ell_2\cap\mathbf{e}_2$ or $O=\ell_3\cap\mathbf{e}_3$.

If $O\in\mathbf{e}_1$, we let $C=\mathbf{e}_1$.
In this case, we have $\tau=2$, and
$$
P(u)=\left\{\aligned
&\Big[-\frac{u}{2}, 1, 1, 1, 2, -u, 0, 0\Big]\ \text{if $0\leqslant u\leqslant 1$}, \\
&\Big[-\frac{u}{2}, 2-u, 1, 1, 2, -u, 0, 0\Big]\ \text{if $1\leqslant u\leqslant 2$}, \\
\endaligned
\right.
$$
$$
N(u)=\left\{\aligned
&\frac{u}{2} \ell_0\ \text{if $0\leqslant u\leqslant 1$}, \\
&\frac{u}{2} \ell_0 + (u-1) \ell_1\ \text{if $1\leqslant u\leqslant 2$}, \\
\endaligned
\right.
$$
$$
P(u)\cdot C=\left\{\aligned
&\frac{2+u}{2}\ \text{if $0\leqslant u\leqslant 1$}, \\
&\frac{4 - u}{2}\ \text{if $1\leqslant u\leqslant 2$}, \\
\endaligned
\right.
\quad
P(u)^2=\left\{\aligned
&5-2u-\frac{u^2}{2}\ \text{if $0\leqslant u\leqslant 1$}, \\
&\frac{(6-u)(2-u)}{2} \ \text{if $1\leqslant u\leqslant 2$}, \\
\endaligned
\right.
$$
which implies that $S_S(C)=1$ and $S(W^{C}_{\bullet, \bullet};O)\leqslant\frac{13}{15}$ if $O\in\mathbf{e}_1\setminus \ell_0$,
so that $\delta_P(S)=1$ by \eqref{equation:dP5-delta-estimate-1}.

Likewise, we see that $\delta_P(S)=1$ in the case when $O\in\mathbf{e}_2$ or $O\in\mathbf{e}_3$.
Thus, to complete the~proof, we may assume that $P$ is not contained in any line in $S$.

Now, we let $C$ be the unique curve in the pencil $|\ell_1+\mathbf{e}_1|$ that contains $P$.
By our assumption, the curve $C$ is smooth and irreducible.
Then $\tau=2$, and
$$
P(u)=\left\{\aligned
&\Big[-\frac{u}{2}, 1-u, 1, 1, 2, -u, 0, 0\Big]\ \text{if $0\leqslant u\leqslant 1$}, \\
&\Big[-\frac{u}{2}, 1-u, 1, 1, 3-u, -u, 0, 0\Big]\ \text{if $1\leqslant u\leqslant 2$}, \\
\endaligned
\right.
$$
$$
N(u)=\left\{\aligned
&\frac{u}{2} \ell_0\ \text{if $0\leqslant u\leqslant 1$}, \\
&\frac{1}{2} u \ell_0 + (u-1)\mathbf{e}_0\ \text{if $1\leqslant u\leqslant 2$}, \\
\endaligned
\right.
$$
$$
P(u)\cdot C=\left\{\aligned
&\frac{4-u}{2}\ \text{if $0\leqslant u\leqslant 1$}, \\
&\frac{3(2-u)}{2}\ \text{if $1\leqslant u\leqslant 2$}, \\
\endaligned
\right.
\quad
P(u)^2=\left\{\aligned
&5-4u+\frac{u^2}{2}\ \text{if $0\leqslant u\leqslant 1$}, \\
&\frac{3(2-u)^2}{2}\ \text{if $1\leqslant u\leqslant 2$}. \\
\endaligned
\right.
$$
Then $S_S(C)=\frac{11}{15}$ and $S(W^{C}_{\bullet, \bullet};O)=\frac{23}{30}$.
Thus, it follows from \eqref{equation:dP5-delta-estimate-1} that $\delta_P (S)\geqslant\frac{30}{23}>\frac{15}{13}$.
\end{proof}

Finally, let us estimate $\delta_P(S)$ in the case when the~del Pezzo surface $S$ has two singular points.
In this case, the surface $S$ contains a line that passes through both its singular points \cite{Coray1988}.

\begin{lemma}
\label{lemma:delta-dP-5-A1-A1}
Suppose $S$ has two singular points.
Let $\ell$ be the~line in $S$ that passes through both singular points of the surface $S$.
Then $\delta(S)=\frac{15}{19}$. Moreover, the following assertions hold:
\begin{itemize}
\item If $P$ is not contained in any line in $S$ that contains a singular point of $S$, then $\delta_P(S)\geqslant\frac{15}{13}$.
\item If $P$ is not contained in the line $\ell$, but $P$ is contained in a line in $S$ that passes through a~singular point of the surface $S$, then $\delta_P(S)=1$.
\item If $P\in\ell$, then $\delta_P(S)=\frac{15}{19}$.
\end{itemize}
\end{lemma}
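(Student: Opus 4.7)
The plan is to mirror the case analysis of Lemma~\ref{lemma:delta-dP-5-A1}, but on the minimal resolution $\eta\colon\widetilde{S}\to S$ of the $(2\mathbb{A}_1)$-quintic del Pezzo. By Lemma~\ref{lemma:singular-quintic-del-Pezzo-surfaces} and Remark~\ref{remark:singular-quintic-del-Pezzo-surfaces}, the surface $\widetilde{S}$ contains exactly two disjoint $(-2)$-curves $\ell_0,\ell_0'$ (contracted by $\eta$ to the two singular points of $S$) together with five $(-1)$-curves: a unique curve $\widetilde{\ell}$ meeting both $(-2)$-curves (with $\eta(\widetilde{\ell})=\ell$), two curves each meeting exactly one $(-2)$-curve (the preimages of the lines $\mathbf{e}_1,\mathbf{e}_2$ of Section~\ref{section:3-3}), and two curves disjoint from the $\eta$-exceptional locus (the preimages of $\ell_1,\ell_2$).

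First I would make this description explicit, for instance by realizing $\widetilde{S}$ as an appropriate iterated blow-up so that the classes of the seven negative curves generate $\mathrm{Pic}(\widetilde{S})\otimes\mathbb{R}$ with intersection pattern read off from the $(2\mathbb{A}_1)$-table of Section~\ref{section:3-3}. This turns the remaining arguments into direct Zariski-decomposition bookkeeping: for each prescribed position of $P$, choose a flag $(C,O)$ with $O\in\widetilde{S}$, $\eta(O)=P$, and $C$ a suitable smooth rational curve through $O$; determine $\tau$, write $P(u),N(u)$ piecewise on $[0,\tau]$; integrate to get $S_S(C)$ and $S(W^{C}_{\bullet,\bullet};O)$; and invoke \eqref{equation:dP5-delta-estimate-1} or \eqref{equation:dP5-delta-estimate-2}.

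The cases are as follows. If $P\in\mathrm{Sing}(S)$, take $C=\ell_0$, expecting $S_S(\ell_0)=\tfrac{19}{15}$ and $S(W^{\ell_0}_{\bullet,\bullet};O)\leqslant\tfrac{19}{15}$ for every $O\in\ell_0$, so that $\delta_P(S)=\tfrac{15}{19}$. If $P\in\ell\setminus\mathrm{Sing}(S)$, take $C=\widetilde{\ell}$; both $\ell_0$ and $\ell_0'$ enter the negative part of $-K_{\widetilde{S}}-u\widetilde{\ell}$ once $u$ is large enough, producing $S_S(C)=\tfrac{19}{15}$ and hence $\delta_P(S)=\tfrac{15}{19}$. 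If $P$ lies on a line of $S$ through exactly one singular point, take $C$ to be that line on $\widetilde{S}$ and replay the $\mathbf{e}_i$-computation of Lemma~\ref{lemma:delta-dP-5-A1} to obtain $\delta_P(S)=1$. If $P$ lies on a line of $S$ avoiding $\mathrm{Sing}(S)$, take $C$ to be that line and mirror the $\ell_i$-computation of Lemma~\ref{lemma:delta-dP-5-A1} to get $\delta_P(S)=\tfrac{15}{13}$. Finally, if $P$ lies on no line of $S$, choose $C$ in a pencil through $P$ (for instance $|\widetilde{\ell}+\mathbf{e}|$ for a suitable $(-1)$-curve $\mathbf{e}$) to strengthen the bound to $\delta_P(S)\geqslant\tfrac{15}{13}$. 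Combining the five cases, the infimum $\delta(S)=\tfrac{15}{19}$ is attained precisely along $\ell$.

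The main obstacle is the $\widetilde{\ell}$-case: since $\widetilde{\ell}$ meets \emph{both} $(-2)$-curves, the piecewise description of the Zariski decomposition of $-K_{\widetilde{S}}-u\widetilde{\ell}$ gains an extra phase compared with the single-$\mathbb{A}_1$ analogue in Lemma~\ref{lemma:delta-dP-5-A1}, and this extra phase is precisely what shifts $S_S$ from $\tfrac{17}{15}$ to $\tfrac{19}{15}$. One must track the decomposition carefully and verify tightness of \eqref{equation:dP5-delta-estimate-1} at a generic $O\in\widetilde{\ell}$ and of \eqref{equation:dP5-delta-estimate-2} at the two singular endpoints, which is where the exact value $\tfrac{15}{19}$ emerges; the remaining cases then amount to essentially the same kind of integration already carried out in the proof of Lemma~\ref{lemma:delta-dP-5-A1}.
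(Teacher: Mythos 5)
Your overall plan is the same as the paper's: describe the negative curves on the minimal resolution $\eta\colon\widetilde{S}\to S$ (two disjoint $(-2)$-curves, one $(-1)$-curve meeting both, two $(-1)$-curves each meeting one, two more disjoint from the exceptional locus), then run the Abban--Zhuang flag computation \eqref{equation:dP5-delta-estimate-1}--\eqref{equation:dP5-delta-estimate-2} case by case according to the position of $P$. The five cases you list are the same as in the paper's proof, and the anchor computation along $\widetilde{\ell}$ (with $\tau=3$ and both $(-2)$-curves entering $N(u)$) giving $S_S(\widetilde{\ell})=\tfrac{19}{15}$ is exactly what the paper does.

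However, there are two concrete errors in the details. First, for $P\in\mathrm{Sing}(S)$ you take $C=\ell_0$ a $(-2)$-curve and ``expect $S_S(\ell_0)=\tfrac{19}{15}$'', deducing $\delta_P(S)=\tfrac{15}{19}$ from the single flag $(C,O)$. This is wrong: a direct Zariski-decomposition computation (with $\tau=2$, $P(u)^2=5-2u^2$ on $[0,1]$ and $(2-u)(4-u)$ on $[1,2]$) gives $S_S(\ell_0)=\tfrac{17}{15}$, so $\eqref{equation:dP5-delta-estimate-2}$ yields only the lower bound $\delta_P(S)\geqslant\tfrac{15}{19}$ and the too-weak upper bound $\delta_P(S)\leqslant\tfrac{15}{17}$. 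The sharp upper bound $\delta_P(S)\leqslant\tfrac{15}{19}$ has to come from a different divisor, namely the strict transform $\widetilde{\ell}$ of $\ell$: since $\widetilde{\ell}$ passes through $O$ (its center on $S$ contains the singular point $P$) and $S_S(\widetilde{\ell})=\tfrac{19}{15}$ with $A_S(\widetilde{\ell})=1$, one gets $\delta_P(S)\leqslant\tfrac{15}{19}$; your proposal does not use this step, and without it the claimed equality does not follow. Second, for $P$ lying on no line you propose the pencil $|\widetilde{\ell}+\mathbf{e}|$ for a ``suitable $(-1)$-curve $\mathbf{e}$''. No such pencil exists: from the intersection table, $\widetilde{\ell}$ meets only the two $(-2)$-curves (with multiplicity $1$ each) and is disjoint from every $(-1)$-curve, so $(\widetilde{\ell}+\mathbf{e})^2\neq 0$ for any $(-1)$-curve $\mathbf{e}$, and $|\widetilde{\ell}+\mathbf{e}|$ is not a pencil. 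The paper instead uses a pencil of conics of the form $|\ell_i+\ell_j|$ built from two $(-1)$-curves meeting at one point and avoiding $\mathrm{Sing}(S)$; you should replace your choice by this one.
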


\begin{proof}
Let $\mathbf{e}_1$ and $\mathbf{e}_2$ be $\eta$-exceptional curves.
Then $\widetilde{S}$
contains $(-1)$-curves $\ell_1$, $\ell_2$, $\ell_3$, $\ell_4$, $\ell_5$ such that the intersections of the curves
$\ell_1$, $\ell_2$, $\ell_3$, $\ell_4$, $\ell_5$,  $\mathbf{e}_1$, $\mathbf{e}_2$ on  $\widetilde{S}$ are given in the following table.
\begin{center}
\renewcommand\arraystretch{1.2}
\begin{tabular}{|c||c|c|c|c|c|c|c|}
 \hline
 & $\ell_1$ & $\ell_2$ & $\ell_3$ & $\ell_4$ & $\ell_5$ & $\mathbf{e}_1$ & $\mathbf{e}_2$  \\
 \hline
 \hline
$\ell_1$ & $-1$ & $0$ & $0$ & $0$ & $0$ & $1$ & $1$ \\
 \hline
$\ell_2$ & $0$& $-1$ & $1$ & $0$& $0$& $1$ & $0$\\
 \hline
$\ell_3$ & $0$ & $1$ & $-1$ & $1$ & $0$ & $0$ & $0$\\
 \hline
$\ell_4$ & $0$ & $0$ & $1$ & $-1$ & $1$ & $0$ & $0$ \\
 \hline
$\ell_5$ & $0$ & $0$ & $0$ & $1$ & $-1$ & $0$ & $1$ \\
 \hline
$\mathbf{e}_1$ & $1$ & $1$ & $0$ & $0$ & $0$ & $-2$ & $0$\\
 \hline
$\mathbf{e}_2$ & $1$ & $0$ & $0$ & $0$ & $1$ & $0$ & $-2$ \\
 \hline
\end{tabular}
\end{center}

The curves $\eta(\ell_1)$, $\eta(\ell_2)$, $\eta(\ell_3)$, $\eta(\ell_4)$, $\eta(\ell_5)$ are the only lines in $S$.
Moreover, we have $\ell=\eta(\ell_1)$,
and $\eta(\ell_1)$, $\eta(\ell_2)$, $\eta(\ell_5)$ are the only lines in $S$
that contain a singular point of the surface $S$.

As in the proof of Lemma~\ref{lemma:delta-dP-5-A1}, for $(a_1,a_2,a_3,a_4,a_5,b_1, b_2)\in \mathbb{R}^7$, we write
$$
[a_1,a_2,a_3,a_4,a_5,b_1, b_2] := \sum_{i=1}^5 a_i \ell_i + \sum_{i=1}^2 b_i \mathbf{e}_i \in \mathrm{Pic} (\widetilde{S}) \otimes \mathbb{R}.
$$

If $O\in\ell_1\setminus(\mathbf{e}_1\cup\mathbf{e}_2)$, we let $C=\ell_1$.
In this case, we have $\tau=3$, and
$$
P(u)=\left\{\aligned
&\Big[1-u, 1, 1, 1, 1, \frac{2-u}{2}, \frac{2-u}{2}\Big] \ \text{if $0\leqslant u\leqslant 2$}, \\
&[1-u, 3-u, 3-u, 0, 0, 0]\ \text{if $2\leqslant u\leqslant 3$}, \\
\endaligned
\right.
$$
$$
N(u)=\left\{\aligned
&\frac{u}{2} (\mathbf{e}_1 + \mathbf{e}_2)\ \text{if $0\leqslant u\leqslant 2$}, \\
&(u-2) (\ell_2 + \ell_5) + (u-1)(\mathbf{e}_1 + \mathbf{e}_2) \ \text{if $2\leqslant u\leqslant 3$}, \\
\endaligned
\right.
$$
$$
P(u)\cdot C=\left\{\aligned
&1 \ \text{if $0\leqslant u\leqslant 2$}, \\
&3-u \ \text{if $2\leqslant u\leqslant 3$}, \\
\endaligned
\right.
\quad
P(u)^2=\left\{\aligned
&5 - 2u \ \text{if $0\leqslant u\leqslant 2$}, \\
&(3-u)^2 \ \text{if $2\leqslant u\leqslant 3$}, \\
\endaligned
\right.
$$
which implies that $S_S(C)=\frac{19}{15}$ and $S(W^{C}_{\bullet, \bullet};O)\leqslant\frac{17}{15}$,
so that $\delta_P(S)=\frac{15}{19}$ by \eqref{equation:dP5-delta-estimate-1}.

If $O\in\mathbf{e}_1$, then $C=\mathbf{e}_1$.
In this case, we have $\tau=2$, and
$$
P(u)=\left\{\aligned
&[1, 1, 1, 1, 1, 1-u, 1]\ \text{if $0\leqslant u\leqslant 1$}, \\
&[3-2u, 2-u, 1, 1, 1, 1-u, 2-u]\ \text{if $1\leqslant u\leqslant 2$}, \\
\endaligned
\right.
$$
$$
N(u)=\left\{\aligned
&0\ \text{if $0\leqslant u\leqslant 1$}, \\
&2(u-1)\ell_1 + (u-1) \ell_2 + (u-1) \mathbf{e}_2\ \text{if $1\leqslant u\leqslant 2$}, \\
\endaligned
\right.
$$
$$
P(u)\cdot C=\left\{\aligned
&2u\ \text{if $0\leqslant u\leqslant 1$}, \\
&3-u\ \text{if $1\leqslant u\leqslant 2$}, \\
\endaligned
\right.
\quad
P(u)^2=\left\{\aligned
& 5-2u^2\ \text{if $0\leqslant u\leqslant 1$}, \\
&(2-u)(4-u) \ \text{if $1\leqslant u\leqslant 2$}, \\
\endaligned
\right.
$$
which implies that $S_S(C)=\frac{17}{15}$ and $S(W^{C}_{\bullet, \bullet};O)\leqslant\frac{19}{15}$,
so that $\delta_P(S)\geqslant\frac{19}{15}$ by \eqref{equation:dP5-delta-estimate-2}.

On the other hand, we already know that $S_S(\ell)=\frac{19}{15}$, which implies that $\delta_P(S)=\frac{19}{15}$ if $P=\eta(\mathbf{e}_1)$.
Similarly, we see that $\delta_P(S)=\frac{19}{15}$ if $P=\eta(\mathbf{e}_2)$.
Hence, we may assume that $O\not\in\mathbf{e}_1\cup\mathbf{e}_2\cup\ell_1$.

If $O\in \ell_2$, we let $C=\ell_2$.
In this case, we have $\tau=2$, and
$$
P(u)=\left\{\aligned
&\Big[1, 1-u, 1, 1, 1, \frac{2-u}{2}, 1\Big]\ \text{if $0\leqslant u\leqslant 1$}, \\
&\Big[1, 1-u, 2-u, 1, 1, \frac{2-u}{2}, 1\Big]\ \text{if $1\leqslant u\leqslant 2$}, \\
\endaligned
\right.
$$
$$
N(u)=\left\{\aligned
&\frac{u}{2} \mathbf{e}_1\ \text{if $0\leqslant u\leqslant 1$}, \\
&\frac{u}{2} \mathbf{e}_1 + (u-1) \ell_3\ \text{if $1\leqslant u\leqslant 2$}, \\
\endaligned
\right.
$$
$$
P(u)\cdot C=\left\{\aligned
&\frac{2+u}{2}\ \text{if $0\leqslant u\leqslant 1$}, \\
&\frac{4-u}{2}\ \text{if $1\leqslant u\leqslant 2$}, \\
\endaligned
\right.
\quad
P(u)^2=\left\{\aligned
& 5-2u - \frac{u^2}{2}\ \text{if $0\leqslant u\leqslant 1$}, \\
&\frac{(6-u)(2-u)}{2}\ \text{if $1\leqslant u\leqslant 2$}, \\
\endaligned
\right.
$$
which implies that $S_S(C)=1$ and $S(W^{C}_{\bullet, \bullet};O)\leqslant\frac{13}{15}$,
so that $\delta_P(S)=1$ by \eqref{equation:dP5-delta-estimate-1}.

Similarly, we see that $\delta_P(S)=1$ if $O\in \ell_5$.
Hence, if $P$ is contained in a line in $S$ that passes through a~singular point of the surface $S$, then $\delta_P(S)=1$.
Thus, we may assume that $O\not\in\ell_2\cup\ell_2$.

If $P\in \ell_3$, we let $C=\ell_3$.
In this case, we have $\tau=2$, and
$$
P(u)=\left\{\aligned
&[1, 1, 1-u, 1, 1, 1, 1]\ \text{if $0\leqslant u\leqslant 1$}, \\
& [1, 3-2u, 1-u, 2-u, 1, 2-u, 1]\ \text{if $1\leqslant u\leqslant 2$}, \\
\endaligned
\right.
$$
$$
N(u)=\left\{\aligned
&0\ \text{if $0\leqslant u\leqslant 1$}, \\
& (u-1) (\ell_4 + 2 \ell_2 + \mathbf{e}_1)\ \text{if $1\leqslant u\leqslant 2$}, \\
\endaligned
\right.
$$
$$
P(u)\cdot C=\left\{\aligned
&1+u\ \text{if $0\leqslant u\leqslant 1$}, \\
&4-2u\ \text{if $1\leqslant u\leqslant 2$}, \\
\endaligned
\right.
\quad
P(u)^2=\left\{\aligned
&5-2u - u^2\ \text{if $0\leqslant u\leqslant 1$}, \\
&2(2-u)^2\ \text{if $1\leqslant u\leqslant 2$}, \\
\endaligned
\right.
$$
which implies that $S_S(C)=\frac{13}{15}$ and $S(W^{C}_{\bullet, \bullet};O)\leqslant\frac{13}{15}$,
so that $\delta_P(S)=\frac{15}{13}$ by \eqref{equation:dP5-delta-estimate-1}.

Similarly, we see that $\delta_P(S)=\frac{15}{13}$ if $O\in\ell_4$.
Therefore, we may also assume that $O\not\in\ell_3\cup\ell_4$.

Let $C$ be the curve in the pencil $|\ell_2 + \ell_3|$ that contains $O$.
Then $C$ is smooth and irreducible, since $O$ is not contained in the curves $\ell_1$, $\ell_2$, $\ell_3$, $\ell_4$, $\ell_5$,  $\mathbf{e}_1$, $\mathbf{e}_2$ by assumption.
Then $\tau=2$, and
$$
P(u)=\left\{\aligned
&\Big[1, 1-u, 1-u, 1, 1, \frac{2-u}{2}, 1\Big]\ \text{if $0\leqslant u\leqslant 1$}, \\
&\Big[1, 1-u, 1-u, 2-u, 1, \frac{2-u}{2}, 1\Big]\ \text{if $1\leqslant u\leqslant 2$}, \\
\endaligned
\right.
$$
$$
N(u)=\left\{\aligned
&\frac{u}{2} \mathbf{e}_1\ \text{if $0\leqslant u\leqslant 1$}, \\
&\frac{u}{2} \mathbf{e}_1 + (u-1) \ell_4\ \text{if $1\leqslant u\leqslant 2$}, \\
\endaligned
\right.
$$
$$
P(u)\cdot C=\left\{\aligned
&\frac{4-u}{2}\ \text{if $0\leqslant u\leqslant 1$}, \\
&\frac{3(2-u)}{2} \ \text{if $1\leqslant u\leqslant 2$}, \\
\endaligned
\right.
\quad
P(u)^2=\left\{\aligned
&5-4u+\frac{u^2}{2}\ \text{if $0\leqslant u\leqslant 1$}, \\
&\frac{3(2-u)^2}{2}\ \text{if $1\leqslant u\leqslant 2$}. \\
\endaligned
\right.
$$
This implies that $S_S(C)=\frac{11}{15}$ and $S(W^{C}_{\bullet, \bullet};O)=\frac{23}{30}$,
so that $\delta_P(S)\geqslant\frac{30}{23}>\frac{15}{13}$ by \eqref{equation:dP5-delta-estimate-1}.
\end{proof}

\section{Nemuro Lemma}
\label{section:nemurro}

Now, let $X$ be any smooth Fano threefold, let $\pi\colon X\to \mathbb{P}^1$ be a fibration into del Pezzo surfaces,
let~$S$ be a fiber of the~morphism $\pi$ such that $S$ is an irreducible reduced normal del Pezzo surface
that has at worst du Val singularities, and let $P$ be a point in $S$.
As in Section~\ref{section:Abban-Zhuang},
set
$$
\tau=\mathrm{sup}\Big\{u\in\mathbb{Q}_{\geqslant 0}\ \big\vert\ \text{the divisor  $-K_X-uS$ is pseudo-effective}\Big\}.
$$
For~$u\in[0,\tau]$, let $P(u)$ be the~positive part of the~Zariski decomposition of the~divisor $-K_X-uS$,
and let $N(u)$ be its negative part.
Suppose, in addition, that
$$
N(u)=\sum_{j=1}^l f_j(u) E_j
$$
for some irreducible reduced surfaces $E_1,\dots,E_l$ on the~Fano threefold $X$ that are different from~$S$,
where each $f_i\colon[0,\tau]\to\mathbb{R}_{\geqslant 0}$ is some function.
For every $j\in\{1,\ldots,l\}$, we set $c_j=\mathrm{lct}_{P}(S;E_j|_S)$.
As in  Appendix~\ref{section:delta-dP}, we set $\delta_P(S)=\delta_P(S,-K_S)$.
Define $S(W^S_{\bullet,\bullet};F)$ and $\delta_{P}(S;W^S_{\bullet,\bullet})$ as in \cite[\S~1]{ACCFKMGSSV},
or define these numbers using the~formulas used in \eqref{equation:Hamid-Ziquan}.

\begin{lemma}
\label{lemma:Nemuro}
Let $F$ be any prime divisor over $S$ such that $P\in C_S(F)$. Then
\begin{multline}
\label{equation:Nemuro}\tag{$\diamondsuit$}
\quad S\big(W^S_{\bullet,\bullet};F\big)\leqslant
A_S(F)\frac{3}{(-K_X)^3}\int_0^\tau\sum_{j=1}^\tau\frac{f_j(u)}{c_j}\big(P(u)\big|_S\big)^2du+\\
\quad +\frac{3}{(-K_X)^3}\int_0^\tau\int_0^\infty\mathrm{vol}\big(P(u)\big|_S-vF\big)dvdu\leqslant\\
\leqslant A_S(F)\Biggl(\frac{3}{(-K_X)^3}\sum_{j=1}^l \int_0^\tau \frac{f_j(u)}{c_j}\big(P(u)\big|_S\big)^2 du+\frac{3}{(-K_X)^3}\frac{\tau(-K_S)^2}{\delta_P(S)}\Biggr).\quad\quad\quad
\end{multline}
In particular, we have
\begin{eqnarray*}
\delta_{P}\big(S;W^S_{\bullet,\bullet}\big)\geqslant\Biggl(\frac{3}{(-K_X)^3}\sum_{j=1}^l \int_0^\tau \frac{f_j(u)}{c_j}\big(P(u)\big|_S\big)^2 du+\frac{3}{(-K_X)^3}\frac{\tau(-K_S)^2}{\delta_P(S)}\Biggr)^{-1}.
\end{eqnarray*}
\end{lemma}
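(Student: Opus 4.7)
The plan is to bound the two summands in the definition of $S(W^S_{\bullet,\bullet};F)$ separately, each linearly in $A_S(F)$. For the first summand, I would note that since each $E_j\neq S$, the restrictions $E_j|_S$ are honest effective divisors on $S$, so
\[
\mathrm{ord}_F\bigl(N(u)|_S\bigr) \;=\; \sum_{j=1}^l f_j(u)\,\mathrm{ord}_F\bigl(E_j|_S\bigr).
\]
By the very definition of $c_j=\mathrm{lct}_P(S;E_j|_S)$, the pair $(S,c_jE_j|_S)$ is log canonical at $P$, which is equivalent to the inequality $c_j\,\mathrm{ord}_F(E_j|_S)\leq A_S(F)$ for every prime divisor $F$ over $S$ with $P\in C_S(F)$. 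Substituting this bound, together with the identity $P(u)^2\cdot S=(P(u)|_S)^2$, into the defining formula of $S(W^S_{\bullet,\bullet};F)$ gives at once the first inequality in \eqref{equation:Nemuro}.

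Next I would estimate the double integral $\int_0^\tau\int_0^\infty \mathrm{vol}(P(u)|_S-vF)\,dv\,du$. The crucial geometric observation is that $S$ is a fiber of $\pi\colon X\to\mathbb{P}^1$, hence $S|_S\equiv 0$, and adjunction yields $-K_S\equiv -K_X|_S$. Restricting the relation $-K_X-uS\sim_{\mathbb{R}} P(u)+N(u)$ to $S$ produces
\[
-K_S\;\sim_{\mathbb{R}}\;P(u)|_S+N(u)|_S,
\]
so that $-K_S-P(u)|_S$ is $\mathbb{R}$-linearly equivalent to the effective divisor $N(u)|_S$. Since adding an effective divisor cannot decrease volume, this gives $\mathrm{vol}(P(u)|_S-vF)\leq \mathrm{vol}(-K_S-vF)$ for all $v\geq 0$. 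Integrating in $v$ and invoking the definition of $\delta_P(S)=\delta_P(S,-K_S)$ from Appendix~\ref{section:delta-dP} yields
\[
\int_0^\infty \mathrm{vol}\bigl(P(u)|_S-vF\bigr)\,dv \;\leq\; (-K_S)^2\cdot S_{-K_S}(F) \;\leq\; \frac{(-K_S)^2}{\delta_P(S)}\,A_S(F).
\]
Integrating over $u\in[0,\tau]$ and multiplying by $3/(-K_X)^3$ produces the second displayed inequality of \eqref{equation:Nemuro}. Dividing the resulting combined bound by $A_S(F)$ and taking infimum over $F$ gives the claimed lower bound on $\delta_P(S;W^S_{\bullet,\bullet})$.

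There is no single hard step here: the whole argument is linear bookkeeping once the right comparisons are in place. The only genuinely conceptual ingredient is the triviality $S|_S\equiv 0$ for a fiber of $\pi$, which is precisely what allows $-K_S$ itself (rather than some larger $\mathbb{R}$-divisor) to dominate $P(u)|_S$, and thereby brings the intrinsic $\delta$-invariant of the del Pezzo surface $S$ into the estimate. The log canonical threshold $c_j$ plays the dual role for the $N(u)$-contribution, converting vertical multiplicities of $N(u)|_S$ into linear upper bounds in $A_S(F)$.
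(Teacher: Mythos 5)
Your proof is correct and takes essentially the same approach as the paper's: it bounds the negative-part contribution via the log canonical thresholds $c_j$, and bounds the volume integral by observing that $P(u)|_S = -K_S - N(u)|_S$ (using $S|_S\sim 0$ and adjunction), so $\mathrm{vol}(P(u)|_S-vF)\leqslant\mathrm{vol}(-K_S-vF)$, and then invoking $\delta_P(S)$.
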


\begin{proof}
Since the~log pair $(S, c_j E_j|_S)$ is log canonical at $P$, we conclude that $\mathrm{ord}_F(E_j|_S)\leqslant \frac{A_S(F)}{c_j}$.
Thus, we get the~first inequality in \eqref{equation:Nemuro}. Moreover, since $P(u)|_S=-K_S-N(u)|_S$, we have
$$
\int_0^\tau\int_0^\infty\mathrm{vol}(P(u)|_S-vF\big)dvdu\leqslant\int_0^\tau (-K_S)^2 S_S(F)du =\tau (-K_S)^2 S_S(F)\leqslant A_S(F) \frac{\tau (-K_S)^2}{\delta_P(S)}.
$$
Hence, the~assertion follows.
\end{proof}

\begin{corollary}
\label{corollary:Nemuro-1}
Suppose that $N(u)=0$ for every $u\in[0,\tau]$, i.e. we have $l=0$. Then
$$
\delta_P(S,W^S_{\bullet,\bullet})\geqslant\frac{(-K_X)^3\delta_P(S)}{3\tau(-K_S)^2}.
$$
\end{corollary}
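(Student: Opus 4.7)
The plan is to derive this corollary directly from Lemma~\ref{lemma:Nemuro} by specializing to the case $l = 0$. When $N(u) = 0$ for every $u \in [0, \tau]$, the sum indexed by $j \in \{1,\ldots,l\}$ in the statement of Lemma~\ref{lemma:Nemuro} is an empty sum and therefore vanishes identically. Consequently, the right-hand side of the upper bound
\[
S\bigl(W^S_{\bullet,\bullet};F\bigr) \leq A_S(F)\Biggl(\frac{3}{(-K_X)^3}\sum_{j=1}^{l} \int_0^\tau \frac{f_j(u)}{c_j}\bigl(P(u)\big|_S\bigr)^2\,du + \frac{3}{(-K_X)^3}\cdot\frac{\tau(-K_S)^2}{\delta_P(S)}\Biggr)
\]
collapses to a single term, giving
\[
S\bigl(W^S_{\bullet,\bullet};F\bigr) \leq A_S(F) \cdot \frac{3\tau(-K_S)^2}{(-K_X)^3\,\delta_P(S)}.
\]

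From this inequality, I would divide through by $A_S(F)$, take the infimum over all prime divisors $F$ over $S$ whose centers contain $P$, and use the definition
\[
\delta_P\bigl(S; W^S_{\bullet,\bullet}\bigr) = \inf_{\substack{F/S \\ P \in C_S(F)}} \frac{A_S(F)}{S(W^S_{\bullet,\bullet};F)}
\]
to conclude that
\[
\delta_P\bigl(S; W^S_{\bullet,\bullet}\bigr) \geq \frac{(-K_X)^3\,\delta_P(S)}{3\tau(-K_S)^2},
\]
which is the required estimate.

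There is no substantive obstacle here: the statement is a direct specialization of the second displayed inequality in Lemma~\ref{lemma:Nemuro}, where the only new observation is that an empty sum contributes zero. Since Lemma~\ref{lemma:Nemuro} has already absorbed the work of bounding $\mathrm{ord}_F(E_j|_S)$ via log canonical thresholds and of controlling the volume integral via $\delta_P(S)$, the present corollary requires nothing beyond this substitution.
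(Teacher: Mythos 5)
Your proof is correct and is exactly the argument the paper intends: with $l=0$ the sum in Lemma~\ref{lemma:Nemuro} is empty, and the second displayed inequality of that lemma reduces immediately to the stated bound. Nothing further is needed.
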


\begin{corollary}
\label{corollary:Nemuro-2}
Suppose that $l=1$, $E_1|_S$ is a smooth curve contained in $S\setminus\mathrm{Sing}(S)$, and
$$
f_1(u)=
\left\{\aligned
&0\ \text{if $u\in[0,t]$}, \\
&c(u-t)\ \text{if $u\in[t,\tau]$}, \\
\endaligned
\right.
$$
for some $t\in(0,\tau)$ and some $c\in\mathbb{R}_{>0}$. Then
$$
\delta_{P}\big(S;W^S_{\bullet,\bullet}\big)\geqslant
\Biggl(
\frac{3}{(-K_X)^3}\int_t^\tau c(u-t)\big(P(u)\big\vert_S\big)^2du
+\frac{3}{(-K_X)^3}\frac{\tau(-K_S)^2}{\delta_P(S)}\Biggr)^{-1}.
$$
\end{corollary}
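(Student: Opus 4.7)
The plan is to derive Corollary \ref{corollary:Nemuro-2} as an immediate consequence of Lemma \ref{lemma:Nemuro} (the Nemuro Lemma) specialized to the case $l=1$ with the specified shape of $f_1$. Since the inequality we want coincides, after substitution, with the inequality given by Lemma \ref{lemma:Nemuro}, the only real content is to compute the constant $c_1 = \mathrm{lct}_P(S; E_1|_S)$.

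\textbf{Step 1: Compute $c_1$.} By hypothesis, $E_1|_S$ is a smooth curve contained in $S \setminus \mathrm{Sing}(S)$. Thus at any point $P \in S$, the pair $(S, E_1|_S)$ is either log smooth at $P$ (if $P \in E_1|_S$, then locally $S$ is smooth and $E_1|_S$ is a smooth divisor) or $E_1|_S$ does not pass through $P$ (in which case the log canonical threshold at $P$ is $+\infty$). In both cases, $c_1 = \mathrm{lct}_P(S; E_1|_S) \geqslant 1$. Since the estimate in Lemma \ref{lemma:Nemuro} is sharper when $c_j$ is larger, taking $c_1 = 1$ in the denominator of $f_1(u)/c_1$ gives a valid (and cleanest) upper bound.

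\textbf{Step 2: Substitute into Lemma \ref{lemma:Nemuro}.} Applying the lemma with $l=1$ and $c_1 = 1$, we get
\[
\delta_P\bigl(S; W^S_{\bullet,\bullet}\bigr) \geqslant \Biggl( \frac{3}{(-K_X)^3}\int_0^\tau f_1(u)\bigl(P(u)\bigr|_S\bigr)^2 du + \frac{3}{(-K_X)^3}\frac{\tau(-K_S)^2}{\delta_P(S)} \Biggr)^{-1}.
\]
Using the explicit form $f_1(u) = 0$ on $[0,t]$ and $f_1(u) = c(u-t)$ on $[t,\tau]$, the first integral reduces to $\int_t^\tau c(u-t)(P(u)|_S)^2 du$, yielding the stated inequality.

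\textbf{Main obstacle.} There is no serious obstacle — this is essentially a bookkeeping specialization of Lemma \ref{lemma:Nemuro}. The only point worth noting carefully is the verification that the smoothness hypothesis on $E_1|_S$ and its containment in the smooth locus of $S$ together yield $c_1 \geqslant 1$, which in turn is what makes the factor of $1/c_1$ disappear from the integrand. Everything else is direct substitution.
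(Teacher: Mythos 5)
Your proof is correct and takes the same approach that the paper implicitly intends: specialize Lemma~\ref{lemma:Nemuro} to $l=1$, observe that the hypotheses force $c_1=\mathrm{lct}_P(S;E_1|_S)\geqslant 1$ (log smooth at $P$ when $P\in E_1|_S$, infinite otherwise), so the factor $1/c_1\leqslant 1$ can be dropped from the integrand while preserving the inequality, and then substitute the explicit form of $f_1$ to reduce $\int_0^\tau$ to $\int_t^\tau$. The paper states this as a corollary without a separate proof, and your write-up supplies exactly the missing bookkeeping.
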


\end{document}